\pgfplotsset{compat=1.8}
\def\revddots{\mathinner{\mkern1mu\raise\p@
\vbox{\kern7\p@\hbox{.}}\mkern2mu
\raise4\p@\hbox{.}\mkern1mu\raise7\p@\hbox{.}\mkern1mu}}
\numberwithin{equation}{section}
\newcommand*{\defeq}{\mathrel{\vcenter{\baselineskip0.5ex \lineskiplimit0pt
                     \hbox{\scriptsize.}\hbox{\scriptsize.}}}%
                     =}
\theoremstyle{plain}
\newtheorem{thm}{Theorem}[section]
\newtheorem{lem}[thm]{Lemma}
\newtheorem{prop}[thm]{Proposition}
\newtheorem{cor}[thm]{Corollary}
\theoremstyle{definition}
\newtheorem{defn}[thm]{Definition}
\newtheorem{rem}[thm]{Remark}
\theoremstyle{remark}
\newcommand{\F}{F}
\newcommand{\GL}{\textnormal{GL}}
\newcommand{\GO}{\textnormal{GO}}
\newcommand{\GU}{\textnormal{GU}}
\newcommand{\SO}{\textnormal{SO}}
\newcommand{\PU}{\textnormal{PU}}
\newcommand{\U}{\textnormal{U}}
\newcommand{\Ind}{\mathrm{Ind}}
\newcommand{\Hom}{\mathrm{Hom}}
\newcommand{\Ext}{\mathrm{Ext}}
\newcommand{\Tr}{\mathrm{Tr}}
\newcommand{\tr}{\mathrm{tr}}
\newcommand{\SL}{\mathrm{SL}}
\newcommand{\Irr}{{\textnormal{Irr}}}
\newcommand{\cind}{{\textnormal{c-ind}}}
\newcommand{\Spin}{{\textnormal{Spin}}}
\title{Howe duality for a quasi-split exceptional dual pair}
\author{Petar Bakić, Gordan Savin}
\date{}
\begin{document}

\maketitle

\begin{abstract}
We prove Howe duality for the theta correspondence arising from the $p$-adic dual pair $G_2 \times (\PU_3 \rtimes \mathbb{Z}/2\mathbb{Z})$ inside the adjoint quasi-split group of type $E_6$.
\end{abstract}

\section*{Introduction}

Let $F$ be a $p$-adic field, that is, a non-archimedean local field of characteristic 0. Simple exceptional Lie algebras over $F$ can be constructed from
pairs $(\mathbb O, J)$ where $\mathbb O$ is an octonion algebra over $F$, and $J$ a Freudenthal Jordan algebra. Let 
$G=\mathrm{Aut}(\mathbb O)$ and $G'=\mathrm{Aut}(J)$.  
Let $\mathfrak g$ and $\mathfrak g'$ be the Lie algebras of $G$ and $G'$, respectively.  Then, 
by a construction of Tits \cite{Ja}, 
\[ 
\mathfrak h = \mathfrak g\oplus \mathfrak g' \oplus \mathbb O^{\circ} \otimes J^{\circ} 
\]
has a structure of a simple exceptional Lie algebra over $F$, where $ \mathbb O^{\circ} $ and $J^{\circ} $ denote trace 0 elements in 
$\mathbb O $ and $J$, respectively. Let $H=\mathrm{Aut}(\mathfrak h)$. It is evident from the construction that there is an inclusion 
\[ 
G \times G' \subseteq H . 
\] 
The group $G$ is a split exceptional group of type $G_2$, whereas $G'$ and $H$ depend on $J$.  
A Freudenthal Jordan algebra is a form of $J_3(C)$, the algebra of $3\times 3$ Hermitian symmetric matrices 
with coefficients in a composition $F$-algebra $C$, see Chapter IX in \cite{KMRT}.  A composition algebra, 
roughly speaking, is a non-associative algebra with an anti-involution $x\mapsto \bar x$ such that 
$N_C(x)=x\bar x$ is a quadratic form satisfying composition, that is, $N_C(xy)=N_C(x) N_C(y)$ for all $x,y\in C$. The case treated in this paper is $C=K$, a quadratic field 
extension of $F$. Then 
\[ 
G'=\mathrm{PU}_3(K) \rtimes\mathrm{Gal}(K/F),
\] 
 where $\mathrm{PU}_3(K)$ is the quotient of the unitary group $\mathrm U_3(K)$ in 
three variables by its center, and $\mathrm{Gal}(K/F)$ acts on coefficients of $\mathrm U_3(K)$ naturally.  The group $H$ is quasi-split of absolute type $E_6$.   

\smallskip  

Let $\Pi$ be the minimal representation of $H$. The goal of this paper is to understand the restriction of $\Pi$ to the dual pair $G\times G'$. More precisely, let 
$\pi$ be a smooth, irreducible representation of $G$. Then there exists a smooth representation $\Theta(\pi)$ of $G'$ such that $\pi\boxtimes \Theta(\pi)$ is the maximal $\pi$-isotypic quotient of $\Pi$. If $\Theta(\pi)$ is non-zero, we prove that it is a finite length $G'$-module, and that it has a unique irreducible quotient 
$\theta(\pi)$. Conversely, if $\sigma$ is an irreducible representation of $G'$ then  $\Theta(\sigma)$ is a finite length $G$-module and, it it is non-zero, then it 
has a unique irreducible module $\theta(\sigma)$. The results are summarized in Theorem \ref{thm_Howe}.

\smallskip 
These results are proved by a period ping-pong, introduced in \cite{gan2021howe}, that can be viewed as a generalization of the doubling method for classical 
theta correspondences \cite{Howe_Corvallis}, \cite{kudla1996notes}. Here, just as for classical theta correspondences, one needs the following ingredient:  
 If $\sigma$ is an irreducible quotient of $\Theta(\pi)$, then $\sigma^{\vee}$ is a quotient of $\Theta(\pi^{\vee})$, where $\pi^{\vee}$ denotes the smooth dual of $\pi$.  For classical theta correspondences this statement can be obtained using the M\oe glin--Vigneras--Waldspurger involution \cite{MVW}. Existence of such an involution is a non-trivial matter; however, if $\pi$ is tempered then $\pi^{\vee}$ is isomorphic to the complex conjugate $\bar \pi$. Since $\bar \Pi\cong \Pi$, it follows at once that $\Theta(\bar\pi)$ is the complex conjugate of $\Theta(\pi)$. Thus the method of period ping-pong works well for tempered representations; however, separate treatment is needed for non-tempered representations. These representations are realized as Langlands quotients of principal series representations and here the method of Jacquet functors works well. Thus a principal contribution of this paper is a computation of the Jacquet functors of $\Pi$ with respect to maximal parabolic subgroups of $G$ and $G'$.  
 
 \smallskip 
 For non-tempered representations we obtain the following explicit result. 
 The group $G'$ is quasi-split of rank one. The Levi factor of a Borel subgroup is isomorphic to $K^{\times}\rtimes \mathrm{Gal}(K/F)$. 
 Let $\chi$ be a character of $K^{\times}$. Let $i(\chi)$ be 
 the two-dimensional representation of $K^{\times}\rtimes \mathrm{Gal}(K/F)$ obtained by inducing $\chi$. 
 Assume, for simplicity, that  $\chi$ is not $\mathrm{Gal}(K/F)$-invariant. Then $i(\chi)$ is irreducible and $i(\chi)\cong i(\chi')$ if and only if $\chi'$ is in the 
$\mathrm{Gal}(K/F)$-orbit of $\chi$.  Now, $i(\chi)$ defines a principal series representation $\sigma$ of $G'$. We now describe its theta lift to $\sigma$ to $G$. 
 The group $G$ has two conjugacy classes of maximal parabolic subgroups; we shall use the letters $Q_1$ and $Q_2$ for parabolic subgroups in the two classes, 
 where the unipotent radical of $Q_2$ is a two step nilpotent group, and the unipotent radical of $Q_1$ is a three step nilpotent group. The Levi factors of both parabolic 
 groups are isomorphic to $\mathrm{GL}_2(F)$.  
Let $W_F\subset W_K$ denote the Weil groups of $F$ and $K$. 
Recall, by local class field theory, that $W_K^{\mathrm{ab}} \cong K^{\times}$. Thus $\chi$ can be viewed as a character of $W_K$. We induce 
$\chi$ to $W_F$ and obtain a parameter of a supercuspidal representation $\tau$ of $\mathrm{GL}_2(F)$.  The theta lift of $\sigma$ is a representation of 
$G$ obtained by inducing $\tau$ from the maximal parabolic $Q_1$. 

\bigskip

The authors would like to thank Wee Teck Gan for initiating this project and for his continued interest. One of his letters to the authors has been adapted to form Section \ref{subs_mini} in the present paper. G.~Savin is partially supported by a grant from the National Science Foundation, DMS-1901745.

\section{Preliminaries}

\subsection{Basic number theory, {\footnotesize stealing title from Weil}}  
Let $F$ be a non-Archimedean local field with the absolute value $|\cdot|$ normalized as usual, and let $K/F$ be a quadratic extension. We let $\overline{z}$ denote the Galois conjugate of an element $z\in K$; we set $N_{K/F}(z) = z\cdot \overline{z}$ and $\Tr(z) = z + \overline{z}$. We let $\omega_{K/F}$ denote the character of $F^{\times}$ that corresponds to $K$ by local class field theory.  

Let $W_K \subset W_F$ be the Weil groups of $K$ and $F$ respectively. The quotient of $W_F$ by the commutator subgroup of $W_K$ is the relative Weil 
group $W_{K/F}$ 
\[ 
1\rightarrow K^{\times}  \rightarrow W_{K/F} \rightarrow \mathrm{Gal}(K/F)  \rightarrow 1. 
\] 
Let $D$ be the unique quaternion algebra over $F$. By Appendix III in \cite{We}  $W_{K/F}$ can be realized as the centralizer of $K^{\times}$ in 
$D^{\times}$. Thus 
\[ 
W_{K/F}= K^{\times} \cup K^{\times} j 
\] 
where $jz=\bar z j$ for all $z\in K^{\times}$  and $j^2$ is in $F^{\times}$, but not in the index two subgroup $N_{K/F}(K^{\times})$.   
Now any character $\chi$ of $K^{\times}$ defines a two dimensional representation of $W_{F}$
\[ 
\rho(\chi) = 
\mathrm{Ind}_{K^{\times}}^{W_{F}}(\chi).  
\] 

\begin{lem} 
\label{lem_base_change}Let $\sigma$ be a non-trivial element in $\mathrm{Gal}(K/F)$. 
\begin{enumerate} 
\item $\rho(\chi) \cong \rho(\chi')$ if and only if $\chi'=\chi$ or $\chi^{\sigma}$.  
\item $\rho(\chi)$ is irreducible if and only if $\chi\neq \chi^{\sigma}$. 
\item If $\chi= \chi^{\sigma}$ then $\rho(\chi)= \chi_1 \oplus \chi_2$ where $\chi_i$ are two characters of 
$F^{\times}$ such that $\chi_i(N_{K/F}(z))\allowbreak = \chi(z)$ for all $z\in K^{\times}$.  
\end{enumerate} 
\end{lem}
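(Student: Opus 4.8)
The proof rests on the representation theory of the index‑two subgroup $W_K\subset W_F$ together with local class field theory. Fix $w_0\in W_F\setminus W_K$; conjugation by $w_0$ induces on $W_K^{\mathrm{ab}}\cong K^{\times}$ the nontrivial element of $\mathrm{Gal}(K/F)$, so that for a character $\chi$ of $K^{\times}$, regarded as a character of $W_K$, the conjugate character is $\chi^{\sigma}$, while $w_0^2\in W_K$. Throughout, $\rho(\chi)=\Ind_{W_K}^{W_F}\chi$.

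The basic input is the Mackey restriction formula $\mathrm{Res}_{W_K}\rho(\chi)\cong\chi\oplus\chi^{\sigma}$, valid because $W_K$ is normal of index two, together with its consequence via Frobenius reciprocity: for characters $\chi,\chi'$ of $K^{\times}$,
\[
\Hom_{W_F}\!\big(\rho(\chi),\rho(\chi')\big)\;\cong\;\Hom_{W_K}\!\big(\chi,\,\chi'\oplus\chi'^{\sigma}\big),
\]
whose dimension is the number of $\mu\in\{\chi',\chi'^{\sigma}\}$ with $\mu=\chi$. Taking $\chi'=\chi$ gives $\dim\mathrm{End}_{W_F}\rho(\chi)=1$ if $\chi\neq\chi^{\sigma}$ and $=2$ if $\chi=\chi^{\sigma}$. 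In the second case Schur's lemma forces $\rho(\chi)$ to be reducible, hence to contain a character $\mu$ of $W_F$; Frobenius then yields $\mu|_{W_K}=\chi$, and the projection formula gives
\[
\rho(\chi)\otimes\mu^{-1}\;\cong\;\Ind_{W_K}^{W_F}\!\big(\chi\cdot\mu^{-1}|_{W_K}\big)\;=\;\Ind_{W_K}^{W_F}\mathbf{1}\;\cong\;\mathbf{1}\oplus\omega_{K/F},
\]
so $\rho(\chi)\cong\mu\oplus\mu\,\omega_{K/F}$. In the first case the same computation shows no proper subrepresentation can exist (a sub‑character would force $\dim\mathrm{End}=2$), so $\rho(\chi)$ is irreducible; this is (2). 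The displayed decomposition, with $\chi_1=\mu$ and $\chi_2=\mu\,\omega_{K/F}$ (two distinct characters of $F^{\times}$ since $\omega_{K/F}\neq\mathbf 1$), is most of (3); it remains only to recall that under local class field theory the inclusion $W_K\hookrightarrow W_F$ induces the norm map $N_{K/F}\colon K^{\times}\to F^{\times}$ on abelianizations, whence $\chi_i|_{W_K}=\chi$ translates into $\chi_i(N_{K/F}(z))=\chi(z)$ for all $z\in K^{\times}$. (Alternatively, one may produce the extension $\mu$ of $\chi$ directly from the vanishing of $H^2(\mathrm{Gal}(K/F),\C^{\times})$, which holds since $\C^{\times}$ is divisible.)

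For (1): if $\chi'\in\{\chi,\chi^{\sigma}\}$ then $\rho(\chi')\cong\rho(\chi)$, because induction is insensitive to conjugation of $\chi$ by $w_0$. Conversely, $\rho(\chi)$ is semisimple by (2) — it is either irreducible or a sum of two characters — its trace vanishes on $W_F\setminus W_K$ and equals $\chi+\chi^{\sigma}$ on $W_K$; since distinct characters of $W_K$ are linearly independent, $\rho(\chi)\cong\rho(\chi')$ forces $\{\chi,\chi^{\sigma}\}=\{\chi',\chi'^{\sigma}\}$ as multisets, i.e. $\chi'=\chi$ or $\chi'=\chi^{\sigma}$. One could equally run the $\Hom$ computation above in both directions and combine it with (2).

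None of these steps is deep: beyond formal Mackey theory, the projection formula, Schur's lemma, and linear independence of characters, the only genuine ingredient is the class‑field‑theoretic identification of $W_K\hookrightarrow W_F$ with the norm map used in (3). The one point that rewards care is fixing, once and for all, the coset representative $w_0$ implementing $\sigma$, so that the formulas for $\mathrm{Res}\,\rho(\chi)$ and for its trace are mutually consistent; I anticipate no real obstacle.
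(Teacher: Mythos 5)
Your proof is correct, but it travels a different road than the paper's. The paper treats the lemma as a direct computation inside the explicit relative Weil group $W_{K/F}=K^{\times}\cup K^{\times}j$ realized in $D^{\times}$: since $\rho(\chi)$ factors through $W_{K/F}$, all three statements can be read off from $2\times 2$ matrices, and for part (3) the key step is Hilbert 90 — $\chi=\chi^{\sigma}$ means $\chi$ kills all $z/\bar z$, hence all norm-one elements, so $\chi$ factors through $N_{K/F}$ and the formula $\chi_i(N_{K/F}(z))=\chi(z)$ defines the $\chi_i$ on the index-two subgroup $N_{K/F}(K^{\times})\subset F^{\times}$, the two extensions differing by $\omega_{K/F}$. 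You instead run the formal machinery for an index-two normal subgroup — Mackey restriction, Frobenius reciprocity in both directions, the projection formula, Schur's lemma, and linear independence of characters — and your only arithmetic input is the class-field-theoretic fact that $W_K^{\mathrm{ab}}\to W_F^{\mathrm{ab}}$ corresponds to the norm map, which converts $\chi_i|_{W_K}=\chi$ into $\chi_i\circ N_{K/F}=\chi$. Your route is more general and self-contained (it needs no model of $W_{K/F}$ and would work verbatim for any separable quadratic extension, or with minor changes for any finite-index setting), while the paper's route is shorter given that it has already set up $W_{K/F}$ explicitly, and it produces the characters $\chi_1,\chi_2$ by hand via Hilbert 90 rather than abstractly as extensions of $\chi$; the two uses of class field theory (Hilbert 90 plus the explicit cocycle $j$, versus norm-compatibility of the reciprocity map) are essentially equivalent pieces of the same picture. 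One small remark: in your argument for (2), the parenthetical ``a sub-character would force $\dim\mathrm{End}=2$'' is correct but compressed — the cleanest phrasing is that a sub-character $\mu$ restricts to a $\mathrm{Gal}(K/F)$-invariant character of $W_K$ equal to $\chi$ or $\chi^{\sigma}$, forcing $\chi=\chi^{\sigma}$; also, semisimplicity is not actually needed in your converse to (1), since equality of traces on $W_K$ already follows from the assumed isomorphism.
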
 
\begin{proof} 
This is all a simple consequence of the explicit description of $W_{K/F}$.  For the last, observe that the condition 
$\chi= \chi^{\sigma}$, by Hilbert 90, implies that $\chi$ is trivial on norm one elements in $K^{\times}$, thus 
the formula $\chi_i(N_{K/F}(z)) = \chi(z)$ defines $\chi_1$ and $\chi_2$ on the index two subgroup of $F^{\times}$.  The two 
characters differ by the character $\omega_{K/F}$.
\end{proof}   

The determinant of $\rho(\chi)$ is an Asai character of $W_F$ denoted by $\mathrm{As}^-(\chi)$.  A character $\chi$ is called conjugate dual if 
$\chi^{-1}= \chi^{\sigma}$. Note that this implies that $\chi$ is trivial on $N_{K/F}(K^{\times})$. Thus the restriction of $\chi$ to $F^{\times}$ is either 
trivial or $\omega_{K/F}$.  Respectively, we say that $\chi$ is conjugate-orthogonal or conjugate-symplectic.  
The following lemma is now again a simple exercise, using the explicit description of $W_{K/F}$.  

\begin{lem} Assume that $\chi$ is a conjugate dual character of $K^{\times}$. Then $\mathrm{As}^-(\chi)=1$ is $\chi$ is conjugate-symplectic 
$\mathrm{As}^-(\chi)=\omega_{K/F}$ if $\chi$ is conjugate-orthogonal.  
\end{lem}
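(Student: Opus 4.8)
The plan is to reduce everything to the explicit model $W_{K/F} = K^{\times} \cup K^{\times} j$ and compute $\mathrm{As}^-(\chi) = \det \rho(\chi)$ directly. Recall $\rho(\chi) = \mathrm{Ind}_{K^{\times}}^{W_F}(\chi)$, and since $\chi$ factors through $K^{\times} = W_K^{\mathrm{ab}}$, the induced representation factors through $W_{K/F}$; so it suffices to work inside the finite-ish group $W_{K/F}$ with its index-two subgroup $K^{\times}$. In the standard basis adapted to the decomposition into the $\chi$-line and its $j$-translate, an element $z \in K^{\times}$ acts by the diagonal matrix $\mathrm{diag}(\chi(z), \chi^{\sigma}(z))$, while $j$ acts by an anti-diagonal matrix with entries $1$ and $\chi(j^2)$ (up to the usual normalization of the induced-representation basis). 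Hence for $z \in K^{\times}$ one gets $\det \rho(\chi)(z) = \chi(z)\chi^{\sigma}(z) = \chi(z \bar z) = \chi(N_{K/F}(z))$, and $\det \rho(\chi)(j) = -\chi(j^2)$.

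Now I would feed in the conjugate-dual hypothesis. If $\chi^{-1} = \chi^{\sigma}$, then on $K^{\times}$ we have $\det\rho(\chi)(z) = \chi(z)\chi(z)^{-1} = 1$, so $\mathrm{As}^-(\chi)$ is trivial on the image of $K^{\times}$ in $W_{K/F}$; this image is exactly the kernel of $W_{K/F} \to \mathrm{Gal}(K/F)$, so $\mathrm{As}^-(\chi)$ descends to a character of $\mathrm{Gal}(K/F)$, i.e. it is either $1$ or $\omega_{K/F}$. To decide which, I evaluate at $j$: $\mathrm{As}^-(\chi)$ equals $\omega_{K/F}$ precisely when $\det\rho(\chi)(j) = -1$, i.e. when $\chi(j^2) = 1$. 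Since $j^2 \in F^{\times} \setminus N_{K/F}(K^{\times})$, the value $\chi(j^2)$ detects the restriction $\chi|_{F^{\times}}$: if $\chi|_{F^{\times}} = 1$ then $\chi(j^2) = 1$, and if $\chi|_{F^{\times}} = \omega_{K/F}$ then $\chi(j^2) = \omega_{K/F}(j^2) = -1$ (because $\omega_{K/F}$ is nontrivial exactly on $F^{\times} \setminus N_{K/F}(K^{\times})$). As noted right before the statement, conjugate-dual means $\chi$ is trivial on norms, so $\chi|_{F^{\times}} \in \{1, \omega_{K/F}\}$, and these two cases are by definition conjugate-symplectic and conjugate-orthogonal respectively. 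Tracing through: conjugate-symplectic $\Rightarrow \chi(j^2) = -1 \Rightarrow \det\rho(\chi)(j) = 1 \Rightarrow \mathrm{As}^-(\chi) = 1$; conjugate-orthogonal $\Rightarrow \chi(j^2) = 1 \Rightarrow \det\rho(\chi)(j) = -1 \Rightarrow \mathrm{As}^-(\chi) = \omega_{K/F}$.

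The one genuinely delicate point — the main obstacle, such as it is — is getting the normalization of the induced-representation matrices right, so that the claimed value $\det\rho(\chi)(j) = -\chi(j^2)$ is correct rather than off by a sign or a factor. The cleanest way to pin this down is to choose coset representatives $\{1, j\}$ for $K^{\times} \backslash W_{K/F}$ and write out the $2\times 2$ matrix of $j$ acting on $\mathrm{Ind}$: it sends the basis vector $e_1$ (supported on the coset $K^{\times}$) to $e_j$ (the coset $K^{\times} j$), and sends $e_j$ to $\chi(j^2) e_1$, since $j \cdot j = j^2 \in K^{\times}$ acts on $e_1$ by $\chi(j^2)$. That matrix has determinant $-\chi(j^2)$, confirming the formula. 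Everything else is the bookkeeping above; no deeper input than Lemma \ref{lem_base_change}, the explicit structure of $W_{K/F}$ from Weil's Appendix III, and the defining property of $\omega_{K/F}$ under local class field theory is needed.
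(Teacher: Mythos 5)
Your computation is correct and is precisely the ``simple exercise using the explicit description of $W_{K/F}$'' that the paper invokes without writing out: $\det\rho(\chi)$ is trivial on $K^{\times}$ because $\chi^{\sigma}=\chi^{-1}$, equals $-\chi(j^{2})$ at $j$, and the value of $\chi(j^{2})$ is decided by $\chi|_{F^{\times}}\in\{1,\omega_{K/F}\}$, giving the stated dichotomy. One sentence of yours has the labels in the wrong order (``these two cases are by definition conjugate-symplectic and conjugate-orthogonal respectively'' --- in the paper's convention $\chi|_{F^{\times}}=1$ is conjugate-orthogonal and $\chi|_{F^{\times}}=\omega_{K/F}$ is conjugate-symplectic), but your final chain of implications uses the correct convention and reaches the correct conclusion, so this is only a wording slip to fix.
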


\subsection{Representations of $p$-adic groups}

Let $G$ denote the group of $F$-points of a reductive algebraic group \textbf{G}. We denote the category of smooth (complex) $G$-representations by $\mathcal{R}(G)$; the set of (equivalence classes of) irreducible representations of $G$ will be denoted by $\text{Irr}(G)$.

We recall the various functors which play a role in the representation theory of $p$-adic groups. Let $P$ be a parabolic subgroup of $G$ with Levi decomposition $P = MN$. We then have the parabolic induction functor $\Ind_P^G$, as well its normalized version, $i_P^G$. If $\pi$ is a smooth representation of $G$, we may consider the Jacquet functor $\pi \mapsto \pi_N$, where $\pi_N$ denotes the space of $N$-coinvariants of $\pi$. The normalized version of the Jacquet functor will be denoted by $r_P(\pi)$. Recall that parabolic induction is adjoint to the Jacquet functor. First, we have the (standard) Frobenius reciprocity, which states that there is a natural isomorphism
\[
\Hom_G(\pi, i_P^G(\sigma)) \cong \Hom_M(r_P(\pi),\sigma);
\]
here $\pi$ and $\sigma$ are representations of $G$ and $M$, respectively. Equally useful is the second (Bernstein) form of Frobenius reciprocity:
\[
\Hom_G(i_P^G(\sigma),\pi) \cong \Hom_M(\sigma,r_{\overline{P}}(\pi));
\]
here $\overline{P}$ denotes the parabolic subgroup opposite to $P$. Finally, we will occasionally use the compact induction functor, which we denote by $\cind$.

\subsection{Cubic Jordan algebras}
\label{subs_cubicJordan}

The space $J$ of Hermitian symmetric $3\times 3$ matrices over $K$ is a Jordan algebra with multiplication 
\[ 
x\circ y = \frac{1}{2} (xy+yx)= \frac{1}{2}[(x+y)^2 - x^2 - y^2]  
\] 
and identity $1$. A typical element of $J$ is
\[
\begin{pmatrix}
a & x & \overline{y}\\
\overline{x} & b & z\\
y & \overline{z} & c
\end{pmatrix},
\]
where $x,y,z \in K$ and $a,b,c\in F$. We let $J_{ij}$ (for $1 \leq i \leq j \leq 3$) denote the subspace of $J$ consisting matrices whose entries are $0$ except on the positions $(i,j)$ and $(j,i)$. For more details on the subject of cubic Jordan algebras, the reader can consult Chapter 38 in \cite{KMRT} and Chapter 4 in \cite{taste}. 
 
 \vskip 10pt 
 
 Let $N$ and $T$ denote the norm (determinant) and the (usual) trace of $3\times 3$ matrices. Recall that $N(x) =x x^{\#}$  where 
 $x^{\#}$ is the usual adjoint matrix to $x$, i.e. made of $2\times 2$ minors of $x$.  
  Let $(x,y,z)$ be the symmetric trilinear form on $J$ such that 
$(x,x,x) =6 N(x)$, that is, 
\[ 
(x,y,z)= N(x+y+z) - N(y+z) - N(x+z) - N(x+y) + N(x) + N(y) + N(z).  
\] 
Then $T(x)=\frac{1}{2}(x,1,1)$  and the adjoint $x^{\#}$ can be defined as the unique element in $J$ such that 
\[ 
(x,x,y) =(x^{\#}, y, 1) 
\] 
for all $y\in J$. A basic fact of linear algebra is that any $x\in J$ satisfies the characteristic 
 polynomial 
 \[
 x^3 - T(x) x^2 + T(x^{\#})x- N(x)=0. 
 \] 
 Multiplying this equation by $x^{\#}$ and then factoring out $N(x)$ gives 
 \[ 
 x^2 -T(x) x + T(x^{\#})- x^{\#}=0 
 \] 
 for all $x\in J$. 
This implies that $x^2$ and thus the Jordan multiplication is completely determined by the cubic form $N$ and the identity $1$. It follows at once that the 
 group of automorphisms of the Jordan algebra is equal to the group of automorphisms of the cubic pointed space $(N,1)$.


\vskip 10pt 
\noindent More generally, if $e\in J$ such that $N(e)=ee^{\#}=1$, we can define a Jordan multiplication 
\[ 
x\circ y = \frac{1}{2} (xe^{\#}y+ye^{\#}x). 
\] 
This is a Jordan algebra $J_e$ with identity $e$, and trace 
\[ 
T_e(x):=\frac{1}{2}(x,e,e) =  \frac{1}{2}(e^{\#}, x, 1)
\]

The trace pairing is given by
\[
(x,y) \mapsto T_e(x\circ y) = (e,e,x)(e,e,y)/4-(e,x,y) = T_e(x)T_e(y) - (e,x,y).
\]
For every $x\in J$ we can define $x^{\#_e}$ by 
\[ 
(x,x,y) =(x^{\#_e}, y, e) 
\] 
for all $y\in J$. Although we shall not need it, we record that $x^{\#_e}=e x^{\#}e$. Again, $J_e$ is determined by $N$ and $e$, thus the automorphism group of $J_e$ is
the group of automorphisms of the cubic pointed space $(N,e)$.  

 \vskip 10pt  
 Elements of $J$ are Hermitian symmetric matrices, in particular, any $e$ such that $N(e)=1$ defines a symmetric Hermitian form on $K^3$ of discriminant one. 
 Since $F$ is $p$-adic, any two such Hermitian spaces are isomorphic. It follows that the Jordan algebras $J_e$ are isomorphic. In the rest of this article, it will be convenient to fix
 \[
e = \begin{pmatrix}
 & & -1\\
  & -1 & \\
  -1 & & 
\end{pmatrix}.
\]
To simplify notation, we will write $J$ instead of $J_e$.

\vskip 10pt
\noindent Finally, we let $L_J$ denote the group of linear transformations of $J$ which preserve $N$. Then
\[
L_J \cong (\{g \in \GL_3(K): \det(g) \in U(1)\}/U(1)) \rtimes \mathbb{Z}/2\mathbb{Z}.
\]
Here $U(1)$ is embedded into $\GL_3(K)$ diagonally. The non-trivial element of $\mathbb{Z}/2\mathbb{Z}$ acts by transposition, and the action of $\GL_3(K)$ on $J$ is given by
\[
(g,X) \mapsto gXg^*, \quad \text{for } g\in L_J \text{ and }X \in J.
\]
Here, $g^*$ denotes the conjugate-transpose of $g$.

\subsection{Groups}
\label{subs_groups}
\noindent Here we describe the various groups that appear in this paper, including the quasi-split group $H$ of type $E_{6,4}$ and the dual pair $G \times G'$ we wish to study.
%
%

Let $H$ denote the adjoint quasi-split group of type $E_{6,4}$ defined over $F$, with splitting field $K$; its Dynkin diagram is given by
\[
\dynkin[%
ampersand replacement=\&, edge length=.75cm, labels*={\alpha_1,\alpha_2,\alpha_3,\alpha_4,\alpha_5,\alpha_6}, upside down = true, scale = 2, text style/.style = {scale = 1}, involutions={16;35}]E6
\]
The relative Dynkin diagram is
\[
\dynkin[%
ampersand replacement=\&, edge length=.75cm, labels*={\alpha_2,\alpha_4, \alpha_{3},\alpha_{1}}, upside down = true, scale = 2, text style/.style = {scale = 1}, arrow style={scale = 2.5}
]F4
\]
The groups we consider are best described on the level of Lie algebras. Here we follow the construction in \cite{rumelhart1997minimal}. Let $\mathfrak{h}$ denote the Lie algebra of $H$. By covering the vertex $\alpha_4$ in the above diagram, we see that $\mathfrak{h}$ contains a subalgebra $\mathfrak{h}_0 = \mathfrak{sl}_3 \oplus \mathfrak{l}$, where $\mathfrak{l} \cong \mathfrak{sl}_3(K)$ is the Lie algebra of $L_J$. Under the adjoint action of $\mathfrak{h}_0$, $\mathfrak{h}$ decomposes as
\[
\mathfrak{h} = \mathfrak{sl}_3 \, \oplus \, \mathfrak{l} \, \oplus \,W \otimes J \, \oplus \,  W^*\otimes J^*.
\]
Here $W = \langle e_1,e_2,e_3\rangle$ denotes the standard representation of $\mathfrak{sl}_3$, and $W^* = \langle e_1^*,e_2^*,e_3^*\rangle$ denotes its dual. We often use the trace pairing to identify $J^*$ with $J$. The Lie bracket relations are described in \cite{rumelhart1997minimal}.

Using the above description of $\mathfrak{h}$, we may now describe the dual pair $G_2 \times (\PU_3(K)\rtimes \mathbb{Z}/2\mathbb{Z})$. We let $\mathfrak{g}'$ denote the centralizer of $e$ in $\mathfrak{l}$: 
\[
\mathfrak{g}' = C_{\mathfrak{l}}(e).
\]
We then set
\[
\mathfrak{g} = C_{\mathfrak{h}}(\mathfrak{g}') = \mathfrak{sl}_3 \, \oplus\, W \otimes e \, \oplus\, W^* \otimes e^*.
\]
We let $G$ and $G'$ denote the closed subgroups of $H$ which correspond to $\mathfrak{g}$ and $\mathfrak{g}'$, respectively. Then $G\times G'$ is a dual pair inside $H$. Furthermore, $G$ is a split group of type $G_2$, and $G'$ is isomorphic to $\PU_3(K) \rtimes \mathbb{Z}/2\mathbb{Z}$. Indeed, we may describe $G'$ directly as the subgroup of $L_J$ which fixes $e$.

\vskip 10pt
\noindent The proof of Howe duality will require us to consider another dual pair inside $H$, which we now describe. Let $E$ be an \'{e}tale cubic $F$-algebra. We consider the set of $E$-isomorphism classes of embeddings $E \hookrightarrow J$. This set is in bijection with the set of ($E$-isomorphism) classes of twisted composition algebras $C$ such that $J = E \oplus C$; see Theorem 1.1 in \cite{gan2014twisted} for additional details. Fixing such a $C$, we let $i_C: E\to J$ denote an embedding in the corresponding isomorphism class; note that this also gives us embedding of $E^*$ into $J^*$. Let $G_{E,C}'$ denote the subgroup of $G'$ fixing $i_C$. The centralizer of $G_{E,C}'$ in $\mathfrak{h}$ contains
\[
\mathfrak{sl}_3 \, \oplus\, \mathfrak{t}_E \, \oplus\, W \otimes E \, \oplus\, W^* \otimes E^*,
\]
where $\mathfrak{t}_E$ is the Lie algebra of trace $0$ elements in $E$, and $E$ is embedded into $J$ using $i_C$. The above Lie algebra is isomorphic to $Lie(G_E)$, where $G_E$ is the simply-connected quasi-split group $\text{Spin}_8^E$. We thus get the dual pair $G_E \times G_{E,C}'$ inside $H$.

\subsection{Minimal representations and theta correspondence}
We will be interested in studying the minimal representation $\Pi$ of $H$. We recall one possible definition here. Let $\Pi$ be an irreducible representation of $H$. A result of Harish-Chandra then says that the character distribution of $\Pi$ can be expressed as
\[
\chi_\Pi = \sum_{\mathcal{O}} c_{\mathcal{O}}\hat{\mu}_{\mathcal{O}}
\]
where the sum is taken over all the nilpotent $H$-orbits in $\mathfrak{h}^*$, and $\hat{\mu}_{\mathcal{O}}$ is the Fourier transform of a (suitably normalized) $H$-invariant measure on $\mathcal{O}$. There exists a minimal non-trivial orbit $\textbf{O}_{\min}$ in $\mathfrak{h}(\overline{F})^*$. Assuming $\textbf{O}_{\min} \cap \mathfrak{h}^*$ consists of a single $H$-orbit $O_{\min}$, we have the following
\begin{defn}
We say that $\Pi$ is minimal if
\[
\chi_\Pi = c_0 + \hat{\mu}_{\mathcal{O}_{\min}}
\]
\end{defn}
\noindent For a detailed exposition of minimal representations and a construction of $\Pi$ for exceptional group, we refer the reader to \cite{gan2005minimal}.

Our goal is to study the restriction of $\Pi$ to the dual pair $G\times G'$ introduced above, and the exceptional theta correspondence which arises in this way. Fixing an irreducible representation $\pi$ of $G$, the maximal $\pi$-isotypic quotient of $\Pi$ is of the form
\[
\pi \otimes \Theta(\pi),
\]
for an admissible representation $\Theta(\pi)$ of $G'$ \cite[Lemme 2.III.4]{MVW}. This is the so-called big theta lift of $\pi$. Of course, one may start from $\sigma \in \Irr(G')$ to obtain the big theta lift $\Theta(\sigma)$ in the same way.

\section{Parabolic subgroups}

In this section we describe the three maximal parabolic subgroups of $H$ we consider in this paper.

\subsection{Three-step parabolic}
The first parabolic subgroup we consider is the maximal parabolic $P_1 = M_1N_1$ which corresponds to the vertex $\alpha_4$ of the Dynkin diagram. On the level of Lie algebras, it can be constructed as follows: let
\[
s = \begin{pmatrix}
1 & &\\
& 1 &\\
& & -2
\end{pmatrix} \in \mathfrak{sl}_3.
\]
Now set $\mathfrak{h}(i) = \{x \in \mathfrak{h}: [s,x] = ix\}$. Then the Lie algebra of $P_1$ is $\mathfrak{p}_1 = \mathfrak{m}_1 + \mathfrak{n}_1$, where
\[
\mathfrak{m}_1 = \mathfrak{h}(0) = \begin{pmatrix}
* & * & \\
* & * &\\
& & \phantom{*}
\end{pmatrix} \, \oplus \, \mathfrak{l}
\]
and $\mathfrak{n}_1 = \mathfrak{h}(1) +  \mathfrak{h}(2) + \mathfrak{h}(3)$ with
\begin{align*}
\mathfrak{h}(1) &= \langle e_1,e_2\rangle \otimes J,\\
\mathfrak{h}(2) &= \langle e_3^*\rangle \otimes J^*,\\
\mathfrak{h}(3) &= \begin{pmatrix}
\phantom{0} & \phantom{0} & *\\
&&*\\
&&
\end{pmatrix}  \subseteq \mathfrak{sl}_3.
\end{align*}
We also have
\begin{align*}
\mathfrak{h}(-1) &=\langle e_1^*, e_2^*\rangle \otimes J^*,\\
\mathfrak{h}(-2) &=\langle e_3\rangle \otimes J.
\end{align*}
Since $N_1$ is a $3$-step nilpotent group, we call $P_1$ the $3$-step parabolic.
We let $\Omega_i$ denote the minimal non-trivial $M_1$-orbit on $\mathfrak{h}(-i)$, $i=1,2$.

Setting $\mathfrak{u}_1(i) = \mathfrak{h}(i) \cap \mathfrak{g}$ for $i=1,2$, we see that
\begin{align*}
\mathfrak{u}_1(1) &= \langle e_1,e_2 \rangle \otimes  \langle e \rangle\\
\mathfrak{u}_1(2) &= \langle e_3^* \otimes e^* \rangle\\
\mathfrak{u}_1(3) &= \mathfrak{h}(3).
\end{align*}
Looking at the intersection of $P_1$ with $G\times G'$, we get
\[
(G\times G') \cap P_1 = Q_1 \times G'.
\]
Here $Q_1 = L_1U_1$ is the maximal parabolic subgroup of $G = G_2$; we identify the Levi factor $L_1$ with $\GL_2$ so that the action on $\mathfrak{u}_1(1)$ is the standard representation.

Now let $V_i$ be the orthogonal complement of $\mathfrak{u}_1(i)$ in $\mathfrak{h}(-i)$ (for $i=1,2$) with respect to the Killing form. Then
\begin{align*}
V_1 &=  \langle e_1^*,e_2^* \rangle \otimes  J_0^*\\
V_2 &=  \langle e_3 \rangle \otimes J_0.
\end{align*}
We need to describe $\Omega_i \cap V_i$, for $i=1,2$. We have (cf.~\cite{savin1999dual}, Lemma 4.1)
\begin{lem}
The group $L_1\times G'$ acts transitively on $\Omega_i \cap V_i$, $i=1,2$. Furthermore,
\begin{align*}
\Omega_1 \cap V_1 &= \{w^* \otimes X: w \in \langle e_1^*,e_2^* \rangle, X\in J_0^*, r(X)=1\}\\
\Omega_2 \cap V_2 &= \{e_3 \otimes X : X \in J_0, r(X)=1\}.
\end{align*}
\end{lem}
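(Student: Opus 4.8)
The plan is to reduce the statement to a transitivity assertion for the Levi $M_1$ acting on $\mathbf{O}_{\min}\cap \mathfrak h(-i)$, intersected with the subspaces $V_i$, and to identify the resulting set with the claimed rank-one locus. First I would recall from \cite{savin1999dual}, Lemma 4.1 (or redo the computation) that $\Omega_i$, the minimal nontrivial $M_1$-orbit on $\mathfrak h(-i)$, has an explicit description: for $i=1$ it is $\{w^*\otimes X : w^*\in\langle e_1^*,e_2^*\rangle,\ X\in J^*,\ \mathrm{r}(X)=1\}$ where $\mathrm{r}$ denotes the rank in the Jordan algebra (rank one means $X^{\#}=0$, $X\neq 0$), and similarly $\Omega_2 = \{e_3\otimes X : X\in J,\ \mathrm{r}(X)=1\}$. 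This is because the minimal orbit condition on $\mathfrak h(-i)\cong (\text{2-dim})\otimes J$ forces the $J$-component to lie on the highest weight orbit of $\mathfrak l\cong\mathfrak{sl}_3(K)$ acting on $J$, which is exactly the rank-one locus, and the $\mathfrak{gl}_2$-factor of $M_1$ acts transitively on nonzero vectors of the two-dimensional factor.

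Next I would intersect with $V_i$. Since $V_1 = \langle e_1^*,e_2^*\rangle\otimes J_0^*$ and $V_2=\langle e_3\rangle\otimes J_0$, where $J_0$ is the orthogonal complement of $\langle e\rangle$ (equivalently the subspace $T_e$-orthogonal to the identity $e$, i.e. the ``trace zero'' part relative to the pairing fixed in Section \ref{subs_cubicJordan}), the intersection $\Omega_i\cap V_i$ consists precisely of those $w^*\otimes X$ (resp. $e_3\otimes X$) with $X$ rank one \emph{and} $X\in J_0^*$ (resp. $J_0$). That is exactly the set written in the statement, so the content is the transitivity of $L_1\times G'$ on it. Here $L_1\cong\GL_2$ acts on the two-dimensional factor $\langle e_1^*,e_2^*\rangle$ (resp. trivially, since that factor is one-dimensional for $i=2$) and $G'$, being the stabilizer of $e$ in $L_J$, acts on $J_0$. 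So transitivity amounts to: (a) $\GL_2$ is transitive on nonzero vectors of the $2$-dimensional space, which is clear; and (b) $G'=\PU_3(K)\rtimes\mathbb Z/2\mathbb Z$ acts transitively on rank-one elements of $J_0$.

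The main obstacle is therefore (b): showing $G'$ acts transitively on the rank-one elements of $J_0$ (equivalently on rank-one Hermitian $3\times 3$ matrices $X$ over $K$ with $T_e(X)=0$, i.e. $(e^\#,X,1)=0$). A rank-one element of $J$ can be written $X = v v^*$ for a vector $v\in K^3$ (up to the norm-one scalars that $\PU_3$ quotients by), and $L_J$ acts by $g\cdot X = gXg^*$, so $\GL_3(K)$ (modulo $U(1)$) is certainly transitive on all rank-one elements of $J$. The issue is the extra linear constraint cutting out $J_0$: I would argue that the condition $T_e(vv^*)=0$ translates into a Hermitian-form condition $\langle v,v\rangle_{e^\#} = 0$ for the Hermitian form attached to $e^\#$ on $K^3$, i.e. $v$ is isotropic for that form. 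The stabilizer of $e$ inside $L_J$ is the unitary group of the Hermitian form $e$ (hence also of $e^\#$), and since $K/F$ is a $p$-adic quadratic extension the Hermitian form in three variables is isotropic and its unitary group acts transitively on the nonzero isotropic vectors (Witt's theorem for Hermitian forms over $p$-adic fields), modulo the scaling by $K^\times$ which lands in $\PU_3$. Handling the precise bookkeeping of the norm-one quotient, and the $\mathbb Z/2\mathbb Z$-factor (transposition), together with checking that the ``$\mathrm r(X)=1$'' locus is nonempty in $J_0^*$ and $J_0$, is the only delicate point; once that transitivity is in hand the lemma follows by combining it with the $\GL_2$-transitivity and the description of $\Omega_i$.
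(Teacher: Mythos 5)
Your reduction to (a) plus (b) contains a genuine error at step (b): the group $G'=\PU_3(K)\rtimes\mathbb{Z}/2\mathbb{Z}$ is \emph{not} transitive on the rank-one elements of $J_0$. A rank-one Hermitian matrix is of the form $X=\lambda vv^*$ with $\lambda\in F^\times$ and $v\in K^3$ (your assertion that one can take $X=vv^*$ is false over a $p$-adic field: rescaling $v$ by $z\in K^\times$ only changes $\lambda$ by $N_{K/F}(z)$, and inside $G'$ one can only rescale $v$ by norm-one scalars). Hence the class of $\lambda$ in $F^\times/N_{K/F}(K^\times)$ is a $G'$-invariant of $X$, and Witt's theorem gives exactly \emph{two} $G'$-orbits on $\{X\in J_0:\ \rj(X)=1\}$, not one. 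This is corroborated by the paper itself: in the computation of $(\Pi_1/\Pi_2)_{U_1}$ the torus $T\times T'$ acts on the line through $e_1^*\otimes xx^*$, identified with $F^\times$, by $a^{-1}\cdot N_{K/F}(z)$ — so the $\GL_2$-factor is indispensable for transitivity (see also Remark \ref{rem_middle_quotients}, and the Fourier–Jacobi discussion where $G'$ is noted to have two orbits on rank-one, trace-one elements).

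The lemma is nonetheless true, and your argument can be repaired by not splitting the action factorwise. A pure tensor only determines its factors up to an $F^\times$-scaling, $w^*\otimes X=(c\,w^*)\otimes(c^{-1}X)$, and the scalar matrices of $L_1\cong\GL_2$ (for $i=1$), respectively the $\det^{-1}$-action on $\langle e_3\rangle$ (for $i=2$), realize arbitrary $F^\times$-scalings of the $J$-component. So what is actually needed is transitivity of $G'$ on rank-one elements of $J_0$ \emph{up to $F^\times$-scaling}, i.e.\ on isotropic lines, and that is exactly what Witt's theorem for the three-variable Hermitian form gives. With this correction (and the identification of $\Omega_i$ with the pure tensors having rank-one $J$-component, which you state and which matches the reference to \cite{savin1999dual}), the rest of your outline — the description of $\Omega_i\cap V_i$ and the nonemptiness of the rank-one locus in $J_0$ — is fine. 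Note that the paper itself gives no argument here beyond citing Lemma 4.1 of \cite{savin1999dual}, so the comparison is with the intended standard argument rather than a written proof.
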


\subsection{Heisenberg parabolic}

Here we consider the maximal parabolic $P_2 = M_2N_2$ which corresponds to the vertex $\alpha_2$ of the Dynkin diagram. On the level of Lie algebras, it can be constructed as follows: let
\[
s = \begin{pmatrix}
1 & &\\
& 0 &\\
& & -1
\end{pmatrix} \in \mathfrak{sl}_3.
\]
Again, set $\mathfrak{h}(i) = \{x \in \mathfrak{h}: [s,x] = ix\}$. We intentionally abuse notation by reusing $s$ and $\mathfrak{h}(i)$, not only to reduce the number of unnecessary symbols, but also to emphasize the analogy in our constructions related to different parabolics. Since we never use these symbols for different parabolics at the same time, there is no fear of confusion.

The Lie algebra of $P_2$ is $\mathfrak{p}_2 = \mathfrak{m}_2 + \mathfrak{n}_2$, where
\[
\mathfrak{m}_2 = \mathfrak{h}(0) = \begin{pmatrix}
* & & \\
& * &\\
& & *
\end{pmatrix} \, \oplus \, \mathfrak{l} \, \oplus \, \langle e_2\rangle \otimes J  \, \oplus \, \langle e_2^*\rangle \otimes J^*,
\]
and $\mathfrak{n}_2 = \mathfrak{h}(1) +  \mathfrak{h}(2)$ with
\begin{align*}
\mathfrak{h}(1) &= \begin{pmatrix}
\phantom{0} & * & \phantom{0}\\
&&*\\
&&
\end{pmatrix} \, \oplus \, \langle e_1\rangle \otimes J  \, \oplus \, \langle e_3^*\rangle \otimes J^*,\\
\mathfrak{h}(2) &= \begin{pmatrix}
\phantom{0} & \phantom{0} & *\\
&&\\
&&
\end{pmatrix} \subseteq \mathfrak{sl}_3.
\end{align*}
We also note that
\[
\mathfrak{h}(-1) = \begin{pmatrix}
\phantom{0} & & \phantom{0}\\
*&&\\
&*&
\end{pmatrix} \, \oplus \, \langle e_3\rangle \otimes J  \, \oplus \, \langle e_1^*\rangle \otimes J^*.
\]
We often refer to $P_2$ as the Heisenberg parabolic, because its unipotent radical $N_2$ is a Heisenberg group with center $Z=\mathfrak{h}(2)$, attached to the symplectic space $N_2/Z = \mathfrak{h}(1)$. We let $\Omega$ denote the minimal non-trivial $M_2$-orbit on $\mathfrak{h}(-1)$. It is the orbit of a highest weight vector. 

We have
\[
\mathfrak{u}_2(1) = \mathfrak{g} \cap \mathfrak{h}(1) =  F  \, \oplus \, \langle e_1 \otimes e\rangle  \, \oplus \, \langle e_3^* \otimes e^*\rangle  \, \oplus \, F;
\]
we identify the intersection $\mathfrak{h}(1) \cap \mathfrak{sl}_3$ with $F \oplus F$.
Looking at the intersection of $P_2$ with $G\times G'$, we get
\[
(G\times G') \cap P_2 = Q_2 \times G'.
\]
Here $Q_2 = L_2U_2$ is the maximal parabolic subgroup of $G = G_2$ whose Levi factor $L_2$ we identify with $GL_2$ so that its action on $\mathfrak{u}_2(1)$ is the symmetric cube representation twisted by $\lvert\det\rvert^{-1}$ (see Section 3 in \cite{GGS}). Once again, $U_2$ is a Heisenberg group (with center $Z$) attached to the space $\mathfrak{u}_2(1)$.

Now let $V$ be the orthogonal complement of $\mathfrak{u}_2(1)$ in $\mathfrak{h}(-1)$ with respect to the Killing form. Then
\[
V = \langle e_3 \rangle \otimes J_0 \, \oplus \,  \langle e_1^* \rangle \otimes J_0^* \cong J_0 \, \oplus \, J_0,
\]
where $J_0$ denotes the set of all elements $X$ in $J$ such that $\tr(Xe) = 0$, and  $J_0^*$ is identified with $J_0$ using the trace pairing. Once again, we need to describe $\Omega \cap V$. Following the proof of Proposition 7.4 in \cite{magaard1997exceptional}, one shows the following
\begin{lem}
We have
\[
\Omega \cap V = \{(X,Y) \in J_0 \oplus J_0:\quad  r(X),r(Y) \leq 1,\quad \dim\langle X,Y\rangle  =1 \}.
\]
Furthermore, $L_2\times G'$ acts transitively on this set.
\end{lem}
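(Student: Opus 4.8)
The plan is to first identify the set $\Omega \cap V$ as a subvariety of $J_0 \oplus J_0$ by writing down equations cutting out $\Omega$ inside $\mathfrak h(-1)$ and then intersecting with $V$; and second, to prove transitivity of the $L_2 \times G'$-action directly, by exhibiting enough elements of this group acting on a chosen base point. For the first part, recall that $\Omega$ is the orbit of a highest weight vector, so it is characterized by the vanishing of certain quadratic equations on $\mathfrak h(-1)$ coming from the adjoint action of $\mathfrak h$ — concretely, the condition that a vector $v \in \mathfrak h(-1)$ lies in the minimal orbit (together with $v \neq 0$) can be read off from the bracket relations in \cite{rumelhart1997minimal}, essentially saying $[v,[v,\cdot]]$ is as degenerate as possible. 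Restricting these equations to $V = \langle e_3 \rangle \otimes J_0 \oplus \langle e_1^* \rangle \otimes J_0^*$ and using the identification $J_0^* \cong J_0$ via the trace pairing, I expect the brackets of $e_3 \otimes X$ and $e_1^* \otimes Y$ with each other and with the other graded pieces to produce exactly: $X^{\#}$-type expressions forcing $r(X) \leq 1$ and $r(Y) \leq 1$, a cross term forcing $X \circ Y$ (or the relevant bilinear pairing) to be proportional to the identity or to vanish, and finally the condition $\dim\langle X, Y\rangle = 1$ which says $X$ and $Y$ are linearly dependent and not both zero. This mirrors exactly the computation in Proposition 7.4 of \cite{magaard1997exceptional}, which is why I would cite it and simply carry out the analogous bracket bookkeeping in our notation.

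For the transitivity statement, I would pick the base point $(X_0, 0)$ with $X_0 = E_{11}$ a rank-one idempotent in $J_0$ (adjusting by the fixed $e$ so that $\tr(X_0 e) = 0$). The group $G' \subset L_J$ acts on $J_0$; since $G'$ is (essentially) $\PU_3(K)$ acting on trace-zero Hermitian matrices, and since over a $p$-adic field all rank-one elements of a fixed "norm type" are conjugate under the unitary group, I expect $G'$ to act transitively on the rank-one elements of $J_0$ — this is the key geometric input and should follow from Witt's theorem for Hermitian forms. The Levi $L_2 \cong \GL_2$ acts on $V \cong J_0 \oplus J_0$ through its action on $\langle e_3\rangle \oplus \langle e_1^*\rangle$, i.e. essentially by a two-dimensional representation mixing the two $J_0$-copies; a generic element of $\GL_2$ will rotate a pair $(X, cX)$ (with $X$ fixed rank-one) to $(X', Y')$ with $\langle X', Y'\rangle = \langle X\rangle$, and scaling lets us reach any proportionality constant. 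Combining: given an arbitrary $(X,Y)$ in the set, both nonzero multiples of a common rank-one vector, first use $L_2$ to move it to the form $(X, 0)$ with $X$ rank-one (or $(0,Y)$, symmetrically), then use $G'$ to move $X$ to $X_0$. The cases where one of $X, Y$ is zero are handled directly by the same two moves.

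The main obstacle I anticipate is the bracket computation identifying $\Omega \cap V$ precisely — in particular, getting the correct normalization so that the "$r(X), r(Y) \leq 1$ and $\dim\langle X,Y\rangle = 1$" conditions come out exactly rather than up to some spurious extra constraint or missing one. One has to be careful that the minimal-orbit equations, when restricted to the subspace $V$, do not become strictly stronger than stated; this requires knowing that the rank-one-plus-dependence locus is already irreducible of the right dimension and matching dimensions with $\dim \Omega - \operatorname{codim} V$ inside $\mathfrak h(-1)$. A secondary subtlety is verifying the transitivity of $G'$ on rank-one elements of $J_0$ with the correct $\mathbb Z/2\mathbb Z$-factor included and checking that no discriminant obstruction appears over $F$; the remark in Section~\ref{subs_cubicJordan} that all the Hermitian forms $J_e$ are isomorphic over the $p$-adic field $F$ is what ultimately removes this obstruction, so I would invoke it explicitly.
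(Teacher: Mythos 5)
Your first half — identifying $\Omega \cap V$ by restricting the minimal-orbit equations to $V$ and citing Proposition 7.4 of \cite{magaard1997exceptional} — is exactly what the paper does (the paper in fact gives no more detail than that citation), so that part is fine, modulo actually carrying out the bracket bookkeeping you defer.

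The transitivity argument, however, has a genuine gap: the claim that $G'$ acts transitively on the rank-one elements of $J_0$ is false. Writing a rank-one element as $X=\lambda vv^*$ with $\lambda\in F^\times$ and $v\in K^3$ isotropic for $e$ (isotropy is exactly the condition $\tr(Xe)=0$), Witt's theorem lets $\mathrm{U}(e)$ move $v$ anywhere, but the scalar $\lambda$ can only be altered modulo $N_{K/F}(K^\times)$ (by rescaling $v$), and the transposition in the $\mathbb{Z}/2\mathbb{Z}$-factor does not change its class either. So $G'$ has exactly two orbits on rank-one elements of $J_0$, indexed by $F^\times/N_{K/F}(K^\times)$ — this is precisely the phenomenon the paper itself records in the Fourier--Jacobi discussion, where $G'$ has two orbits on rank-one, trace-one elements, with representatives separated by a non-norm $\varepsilon$. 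Your appeal to the isomorphism of all discriminant-one Hermitian spaces (the choice of $e$) is irrelevant to this obstruction and does not remove it. Consequently your reduction "move $(X,Y)$ to $(X,0)$ by $L_2$, then move $X$ to $X_0$ by $G'$" breaks down at the second step. The missing ingredient is the scalar action of the torus of $L_2$ on each line in $V$: as the paper notes in its Jacquet-module computation, $(\mathrm{diag}(a,b),z)\in T\times K^\times$ acts on the line through $e_3\otimes xx^*$ by $a\cdot N_{K/F}(z)$, and since $a$ ranges over all of $F^\times$ this exactly absorbs the $F^\times/N_{K/F}(K^\times)$ obstruction. So transitivity genuinely requires the full $L_2\times G'$, with the $L_2$-torus used not just to rotate the pair $(X,Y)$ within its line but also to fix the norm class of the rank-one element; with that correction the rest of your reduction goes through.
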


\subsection{$B_3$ parabolic}
\label{subs_parabolicsD4}
Finally, we consider the maximal parabolic $P' = M'N'$ which corresponds to the vertex $\alpha_{1}$ of the relative Dynkin diagram. Set
\[
s = \begin{pmatrix}
1 & &\\
& 0 &\\
& & -1
\end{pmatrix} \in \mathfrak{sl}_3(K)
\]
and $\mathfrak{h}(i) = \{x \in \mathfrak{h}: [s,x] = ix\}$. Then the Lie algebra of $P'$ is $\mathfrak{p}' = \mathfrak{m}' + \mathfrak{n}'$. Here
\[
\mathfrak{m}' = \mathfrak{h}(0) = \mathfrak{sl}_3 \, \oplus \, \mathfrak{l}(0) \, \oplus \, W \otimes  \begin{pmatrix}
 &  & * \\
 & * &\\
*& & 
\end{pmatrix} \, \oplus \,  W^* \otimes  \begin{pmatrix}
 &  & * \\
 & * &\\
*& & 
\end{pmatrix}^*
\]
and $\mathfrak{n}_1 = \mathfrak{h}(1) +  \mathfrak{h}(2)$, with
\begin{align*}
\mathfrak{h}(1) &=\mathfrak{l}(1)\, \oplus \, W\otimes J_{12} \, \oplus \,  W^* \otimes J_{23}^* \\
\mathfrak{h}(2) &=\mathfrak{l}(2)\, \oplus \, W\otimes J_{11} \, \oplus \,  W^* \otimes J_{33}^*.\\
\end{align*}
We also have
\begin{align*}
\mathfrak{h}(-1) &=\mathfrak{l}(-1)\, \oplus \, W\otimes J_{23} \, \oplus \,  W^* \otimes J_{12}^*, \\
\mathfrak{h}(-2) &=\mathfrak{l}(-2)\, \oplus \, W\otimes J_{33} \, \oplus \,  W^* \otimes J_{11}^*,\\
\end{align*}
where
\[
\mathfrak{l}(-1) = \begin{pmatrix}
&&\phantom{*}\\
*&&\\
&*&
\end{pmatrix} \in \mathfrak{sl}_3(K) \quad \text{and} \quad
\mathfrak{l}(-2) = \begin{pmatrix}
&\phantom{*}&\phantom{*}\\
&&\\
*&&
\end{pmatrix} \in \mathfrak{sl}_3(K).
\]
We let $\Omega_i'$ denote the minimal non-trivial $M_1$-orbit on $\mathfrak{h}(-i)$, for $i=1,2$.

The intersection of $P'$ with $G\times G'$ is
\[
(G\times G') \cap P' = G \times B'.
\]
Here $B'$ denotes the semidirect product of $\mathbb{Z}/2\mathbb{Z}$ with the Borel subgroup consisting of all upper-triangular matrices in $\PU_3(K)$. There is a Levi decomposition $B' = T'U'$ with $T' = K^\times \rtimes \mathbb{Z}/2\mathbb{Z}$; we identify the diagonal torus in $\PU_3(K)$ with $K^\times$ using the isomorphism 
\[
\begin{pmatrix}
a & &\\
& b&\\
&&c
\end{pmatrix} \mapsto \dfrac{a}{b}
\]
($a,b,c$ satisfy $a\overline{c} = 1$ and $b\overline{b} = 1$).

Just like before, we let $\mathfrak{u}'(i) = \mathfrak{h}(i) \cap \mathfrak{g'}$ for $i=1,2$. We get
\begin{align*}
\mathfrak{u}'(1) &= \{\begin{pmatrix}
\phantom{0} & n &\\
&&-\overline{n}\\
&&
\end{pmatrix}\in \mathfrak{sl}_3(K)\} \cong K\\
\mathfrak{u}'(2) &= \{\begin{pmatrix}
\phantom{0} & \phantom{0} & y\\
&&\\
&&
\end{pmatrix}\in \mathfrak{sl}_3(K): \Tr(y) = 0\}.
\end{align*}
We let $V_i'$ be the orthogonal complement of $\mathfrak{u}'(i)$ in $\mathfrak{h}(-i)$ (for $i=1,2$) with respect to the Killing form. Direct computation shows that we have
\begin{align*}
V_1' &=  \{\begin{pmatrix}
&&\phantom{0}\\
x&&\\
&\overline{x}&
\end{pmatrix}: x \in K\}\, \oplus \, W \otimes J_{23} \, \oplus \,  W^* \otimes J_{12}^*,\\
V_2' &=  \{\begin{pmatrix}
&&\phantom{0}\\
\phantom{0}&&\\
y&&
\end{pmatrix}: y \in F\}\, \oplus \, W \otimes J_{33} \, \oplus \,  W^* \otimes J_{11}^*.
\end{align*}
It is convenient to use the following identifications (cf.~\cite[p.137]{savin1999dual}):
\[
V_1' = \mathbb{O}_0 \otimes K, \quad V_2' = \mathbb{O}_0\otimes  F;
\]
here we use $\mathbb{O}_0$ to denote the space of traceless octonions. Then $G\cong G_2$ acts naturally on $\mathbb{O}_0$, whereas $z\in K^\times \subset T'$ acts by $\dfrac 1z$ and $1/N_{K/F}(z)$ on $K$ and $F$, respectively. As before, we want to describe the $G\times T'$ orbits on $\Omega_i' \cap V_i'$ for $i=1,2$. However, a direct computation now shows that $\Omega_1' \cap V_1' = \emptyset$. On the other hand, we have (cf.~\cite{savin1999dual}, Lemma 2.4.11)
\begin{lem}
The group $G\times T'$ acts transitively on $\Omega_2' \cap V_2'$.
\end{lem}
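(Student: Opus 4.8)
The plan is to reduce the transitivity statement to a concrete orbit computation on the octonion model $V_2' = \mathbb{O}_0 \otimes F$, exactly in the spirit of the corresponding statement for the three-step parabolic and for the Heisenberg parabolic. First I would recall from the description of $\mathfrak{h}(-2)$ that $V_2'$ sits inside it as $\mathbb{O}_0 \otimes F$, with the action of $G \cong G_2$ being the standard seven-dimensional action on traceless octonions and the action of $z \in K^\times \subset T'$ being scaling by $1/N_{K/F}(z)$. The minimal nilpotent orbit $\mathbf{O}_{\min}$ of $H$ restricted to $\mathfrak{h}(-2)$ is cut out by an $M'$-invariant condition; the key point is to identify $\Omega_2' \cap V_2'$ explicitly as the set of vectors $u \otimes 1$ with $u \in \mathbb{O}_0$ lying on the quadric $N_{\mathbb{O}}(u) = 0$, $u \neq 0$ — that is, the cone of nonzero traceless octonions of norm zero (the analogue of the rank-one condition appearing in the earlier lemmas). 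This identification is the content of the computation referenced from \cite{savin1999dual}, Lemma 2.4.11, and I would carry it out by writing down a highest weight vector for $\mathbf{O}_{\min}$, intersecting its $M'$-orbit with $V_2'$, and matching the defining equations.

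Once $\Omega_2' \cap V_2'$ is identified with $\{u \in \mathbb{O}_0 : N_{\mathbb{O}}(u) = 0,\ u \neq 0\}$, transitivity of $G \times T'$ follows from two classical facts. First, $G = G_2$ acts transitively on the nonzero vectors of a fixed norm value in $\mathbb{O}_0$; in particular it acts transitively on each nonzero "sphere" $\{N_{\mathbb{O}}(u) = c\}$ for $c \neq 0$, and — since $F$ is $p$-adic so that the null cone of the split seven-dimensional quadratic form $N_{\mathbb{O}}|_{\mathbb{O}_0}$ minus the origin is a single orbit under the full orthogonal group — one checks that $G_2$ itself is transitive on the punctured null cone. (This is where I would invoke the Witt-type homogeneity of the split octonion norm form together with the fact, standard for $G_2$, that the stabilizer in $\Ort_7$ of a null line meets $G_2$ in a parabolic acting transitively on the remaining null directions.) Second, the torus $T'$ contributes the scalings by $1/N_{K/F}(z)$, and since $N_{K/F}: K^\times \to F^\times$ has image an index-two subgroup, the $G_2$-orbits on the punctured null cone are already single orbits, so no extra scaling is even needed; the $T'$-action is only relevant if one works with an orbit where the norm is nonzero but constrained, which does not occur here. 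Assembling these, $G \times T'$ acts transitively on $\Omega_2' \cap V_2'$.

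The main obstacle is the bookkeeping in the first step: correctly pinning down which subvariety of $\mathfrak{h}(-2)$ the minimal orbit carves out after intersecting with the orthogonal complement $V_2'$, and verifying that under the identification $\mathfrak{h}(-2) \supset V_2' \cong \mathbb{O}_0 \otimes F$ the condition "lies in $\mathbf{O}_{\min}$" becomes exactly the single quadratic equation $N_{\mathbb{O}}(u) = 0$ (rather than, say, a rank condition on an auxiliary Jordan-algebra element, as in the $P_1$ and $P_2$ cases). Here I would lean on the explicit bracket relations of \cite{rumelhart1997minimal} and on the fact, already used above, that $\Omega_1' \cap V_1' = \emptyset$, which signals that the relevant constraint collapses to something purely about the $\mathbb{O}_0$-factor. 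Once that identification is in hand, the transitivity is genuinely a statement about the $G_2$-action on the split octonions and is essentially known; the argument then closes quickly, paralleling \cite{savin1999dual}, Lemma 2.4.11 and the proof of Proposition 7.4 in \cite{magaard1997exceptional}.
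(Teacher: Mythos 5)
Your proposal is correct, and it is essentially the argument the paper has in mind: the paper offers no proof of this lemma beyond the pointer to \cite[Lemma 2.4.11]{savin1999dual}, and the content of that reference is precisely the octonion computation you outline. Two remarks. First, the identification you single out as the main obstacle is easier than you fear, and you do not need the $E_6$ bracket relations or $\mathbf{O}_{\min}$ itself: by definition $\Omega_2'$ is the minimal non-trivial $M'$-orbit on $\mathfrak{h}(-2)$, which is an eight-dimensional quadratic space for $M'$ (in the paper's coordinates, the $K$-line $\mathfrak{l}(-2)$ carrying a binary norm form together with the hyperbolic pairing between $W\otimes J_{33}$ and $W^*\otimes J_{11}^*$), so $\Omega_2'$ is simply the set of nonzero isotropic vectors\,---\,a single rational orbit, since the spinor-norm kernel of an isotropic quadratic space of dimension at least three is already transitive on its nonzero isotropic vectors; as the $M'$-invariant form restricts on $V_2'\cong\mathbb{O}_0$ to a multiple of the octonion norm (the unique $G$-invariant form there), the intersection is exactly the punctured null cone $\{u\in\mathbb{O}_0: N_{\mathbb{O}}(u)=0,\ u\neq 0\}$, as you say. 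Second, in the transitivity step the appeal to $F$ being $p$-adic and to Witt-type transitivity of $\mathrm{O}_7$ is both unnecessary and, on its own, insufficient; what you need (and correctly gesture at) is the standard fact that $\mathrm{Aut}(\mathbb{O})(F)=G_2(F)$ acts transitively on nonzero $u\in\mathbb{O}_0$ with $N_{\mathbb{O}}(u)=0$, equivalently on nonzero elements with $u^2=0$ (Springer--Veldkamp, or Jacobson's extension theorem for composition subalgebras), which holds over any field for the split octonions. With that fact in hand, $G$ alone is already transitive, so, as you note, the $T'$-factor is not needed, and the lemma follows.
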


We close this section with the following

\begin{rem}
\label{rem_Shalika}
One can work in a more general setting, starting with a Jordan algebra $J$  of Hermitian symmetric  $3\times 3$ matrices with 
coefficients in a composition algebra $C$. In particular, we have an exceptional Lie algebra 
\[ 
\mathfrak h = (\mathfrak{sl}_3(F) \oplus \mathfrak  l )\oplus W\otimes J \oplus (W\otimes J)^{\ast}.  
\] 
Observe that $\mathfrak{sl}_3(F) \subseteq \mathfrak l$,  where $x\in \mathfrak{sl}_3(F)$ acts on 
$y\in J$ by $xy+yx^{*}$ where $y^{*}$ is the transpose of $y$. This works even when $C$ is the non-associative algebra of octonions.   

In particular, one can define the parabolic subgroup $B'$ in $G'=\mathrm{Aut}(J)$ starting with the same choice of $s$ as above (note that $s \in \mathfrak{sl}_3(F) \subset \mathfrak{l}$). Let $U'$ be the unipotent radical of $B'$, and $U'(2)$ the center of $U'$. Then $U'(2)\cong C_0$, trace 0 elements in $C$, and 
$U'/U'(2)\cong C$. We will need this in \S \ref{sec_Howe} where we prove Howe duality.
\end{rem}

\section{Jacquet modules}
\label{sec_Jacquet}
Recall that $\Pi$ is the minimal representation of $H$. Our goal in this section is to compute $\Pi_{U_1}$, $\Pi_{U_2}$, and $\Pi_{U'}$.

\subsection{Three-step parabolic}
\label{subs_Jacquet1}
To compute $\Pi_{U_1}$, we begin by looking at $\Pi_{N_1}$. Recall that $N_1$ is a three-step nilpotent group: we have
\[
\{1\} = N_1(4) \subseteq N_1(3) \subseteq N_1(2) \subseteq N_1(1) = N_1
\]
with $N_1(i)/N_1(i+1) \cong \mathfrak{h}(i)$. This gives us a filtration of $\Pi$,
\begin{equation}
\label{eq_Pi_filtracija}
\{0\} = \Pi_4 \subseteq \Pi_3 \subseteq \Pi_2 \subseteq \Pi_1 \subseteq \Pi_0 = \Pi,
\end{equation}
where $\Pi_i/\Pi_{i+1} \cong (\Pi_i)_{N_1(i+1)}$. We have
\begin{gather*}
0 \to \Pi_3 \to \Pi  \to \Pi/\Pi_3 \to 0 \\
0 \to \Pi_2/\Pi_3 \to \Pi/\Pi_3 \to \Pi/\Pi_2 \to 0\\
0 \to \Pi_1/\Pi_2 \to \Pi/\Pi_2 \to \Pi/\Pi_1 \to 0.
\end{gather*}
We need to compute $\Pi/\Pi_1$, $\Pi_1/\Pi_2$, and $\Pi_2/\Pi_3$. The first quotient is simply $\Pi_{N_1}$; the remaining two can be computed using the work of M\oe glin and Waldspurger \cite{moeglin1987modeles}. We provide only a rough outline here; see e.g.~\cite{savin1999dual} or \cite{magaard1997exceptional} for additional details.

Recall that $\Omega_1$ is the minimal $M_1$-orbit in $\mathfrak{h}(-1)$. Let $f_1$ be an arbitrary element of $\Omega_1$, and denote by $M_{f_1}$ its stabilizer in $M_1$. Then $f_1$ defines a character $\psi_{f_1}$ on $N_1/N_1(2)$. Then \cite{moeglin1987modeles} shows that the space $\Pi_{N_1,\psi_{f_1}}$ (i.e.~the maximal quotient of $\Pi$ on which $N_1$ acts by $\psi_{f_1}$) is $1$-dimensional; $M_{f_1}$ acts on it by a character $\delta_1$. In short, we get
\[
\Pi_1/\Pi_2 = \cind_{M_{f_1}N_1}^{P_1}(\delta_1 \otimes \psi_{f_1})  = C_c(\Omega_1).
\]
We compute $\Pi_2/\Pi_3$ similarly. We choose an arbitrary element $f_2 \in \Omega_2$ and we let $M_{f_2}$ denote its stabilizer in $M_1$. Again, the results of \cite{moeglin1987modeles} (see also \cite[\S 5]{savin1999dual}) show that $f_2$ defines a certain Heisenberg representation, which we denote by $W_{f_2}$. We get
\[
\Pi_2/\Pi_3 = \cind_{M_{f_2}N_1}^{P_1}(W_{f_2}) = C_c(\Omega_2; W_{f_2}).
\]
Having computed the $N_1$-coinvariants, we proceed to investigate the $U_1$-coinvariants. The unipotent radical $U_1$ of $Q_1$ inherits the filtration from $N_1$:
\begin{equation}
\label{eq_U1_filtration}
\{0\} = U_1(4) \subseteq U_1(3) \subseteq U_1(2) \subseteq U_1(1) = U,
\end{equation}
where $U_1(i) = U_1 \cap N_1(i)$. In particular, $U_1(3) = N_1(3)$. We apply the $U_1$-coinvariants functor to the exact sequences above. From the first one, we see that $\Pi_{U_1} = (\Pi/\Pi_3)_{U_1}$. The remaining two sequences become
\begin{gather*}
0 \to (\Pi_2/\Pi_3)_{U_1} \to \Pi_{U_1} \to (\Pi/\Pi_2)_{U_1} \to 0\\
0 \to (\Pi_1/\Pi_2)_{U_1} \to (\Pi/\Pi_2)_{U_1} \to (\Pi/\Pi_1)_{U_1} \to 0.
\end{gather*}
Thus, to determine $\Pi_{U_1}$, we need to describe $(\Pi/\Pi_1)_{U_1}$, $(\Pi_1/\Pi_2)_{U_1}$, and $(\Pi_2/\Pi_3)_{U_1}$.

\bigskip

\noindent First, notice that $(\Pi/\Pi_1)_{U_1} = (\Pi_{N_1})_{U_1} = \Pi_{N_1}$. The $N_1$-coinvariants can be computed following \cite[\S 4]{Savin1994}, and the exponents of $\Pi$ have been determined in Proposition 8.4 of \cite{gan2005minimal}. As an $L_1 \times G'$-module,
\[
\Pi_{N_1} =|\textrm{det}|^{\frac{7}{2}}\mathbb{1}_{A_2} \,\oplus\, \omega_{K/F}|\textrm{det}|^{\frac{7}{2}}\cdot\Pi_{A_1}.
\]
Recall that the Levi factor $M_1$ consists of two parts (which correspond to the two parts of the $F_4$ diagram one obtains by removing the vertex $\alpha_4$): $A_2$ and $A_1$. Here $\mathbb{1}_{A_2}$ is the trivial representation of $A_2$ (in this case, $\SL_3(K)$), whereas $\Pi_{A_1}$ is a principal series representation of  $A_1$ (i.e.~$\SL_2(F)$). Furthermore, $|\mathrm{det}|$ denotes the standard determinant of $L_1 \cong \GL_2$.

\bigskip

\noindent Next, we consider $(\Pi_1/\Pi_2)_{U_1}$. Just like in \cite[Lemma 2.2]{magaard1997exceptional}, we obtain
\[
(\Pi_1/\Pi_2)_{U_1} = C_c(\Omega_1\cap V_1).
\]
Recall that $\Omega_1 \cap V_1 = \{w^* \otimes X: w \in \langle e_1^*,e_2^* \rangle, X\in J_0^*, r(X)=1\}$. The stabilizer of a line in $\Omega_1 \cap V_1$ (excluding $0$) is a product of Borel subgroups. For the sake of concreteness, we consider the line through $e_1^* \otimes xx^*$, where $x = (1,0,0)^*$ (once more, we identify $J_0^*$ with $J_0$). The stabilizer of $e_1^*$ in $L_1 = \GL_2$ is the subgroup $\overline{B} = T\overline{U}$ consisting of all lower-triangular matrices in $\GL_2$ (recall that we are considering the action of $\GL_2$ on $W^*$, the dual of the standard representation); here $T$ denotes the diagonal torus. The stabilizer of $xx^*$ is the Borel subgroup $B' = T'U'$ of $G'$ introduced in \S\ref{subs_parabolicsD4}. The group $T\times T'$ acts transitively on the above line, which we identify with $\F^\times$: The action of
\[
\left(\begin{pmatrix}
a & \\
 & b
\end{pmatrix} , z\right) \in {T} \times K^\times
\]
on $C_c(F^\times)$ is translation by $a^{-1}\cdot N_{K/F}(z)$ (and $\mathbb{Z}/2\mathbb{Z}$ acts trivially).

We deduce that 
\[
C_c(\Omega_1\cap V_1) = i_{\overline{B} \times B'}^{L_1 \times G'}(\delta_1 \otimes C_c(F^\times))
\]
(normalized induction) where $\delta_1$ is a character of the diagonal torus in $L_1\cong \GL_2$ which is yet to be determined.

\bigskip

\noindent Finally, we determine $(\Pi_2/\Pi_3)_{U_1}$. Recall that $\Omega_2 \cap V_2 = \{e_3 \otimes X : X \in J_0, r(X)=1\}$ is a single $L_1 \times G'$-orbit. We simplify the notation by identifying $\langle e_3 \rangle \otimes J$ with $J$, keeping in mind that $L_1 = \GL_2$ acts on $\Omega_2 \cap V_2$ by $\det^{-1}$. We start by observing that
\[
C_c(\Omega_2;W_{f_2})_{U_1(2)} = C_c(\Omega_2 \cap V_2; W_{f_2}).
\]
Notice that $(L_1\times G') \cap M_{f_2} = RU$, where
\[
R = \{(g,(z,\tau))\in L_1\times (K^\times \rtimes \mathbb{Z}/2\mathbb{Z}): \det(g) = N_{K/F}(z)\}
\]
and $U$ is the unipotent radical. Therefore $C_c(\Omega_2 \cap V_2; W_{f_2}) = \cind_{RU}^{L_1\times G'} W_{f_2}$ and thus
\[
(\Pi_2/\Pi_3)_{U_1} = \cind_{RU}^{L_1\times G'} ((W_{f_2})_{U_1}).
\]
It remains to determine $(W_{f_2})_{U_1}$; to do that, we need an explicit model for $W_{f_2}$. With this in mind, we choose $f_2 = xx^* \in J$, with $x = (1,0,0)^*$. Following \cite{moeglin1987modeles} (see also \cite{savin1999dual}), we consider the alternating form on $\mathfrak{h}(1) \cong \langle e_1,e_2\rangle \otimes J$ given by
\[
(v\otimes X,w\otimes Y) = (v,w)\cdot(f_2,X,Y). 
\]
Here $(v,w)$ is the standard symplectic form on $\langle e_1,e_2\rangle$, and $(f_2,X,Y)$ is the natural trilinear form on $J$. With our choice of $f_2$, the kernel of the bilinear form $(f_2,X,Y)$ is
\[
\Delta = \{
\begin{pmatrix}
a & x & \overline{y}\\
\overline{x} & & \\
y &&
\end{pmatrix} \in J
\}.
\]
We let $\Delta^\bot$ denote the orthogonal complement of $\Delta$ in $J$:
\[
\Delta^\bot = \{\begin{pmatrix}
0 & 0 & 0\\
0 & b & z\\
0 & \overline{z} & c
\end{pmatrix} \in J\}.
\]
The corresponding quadratic form is given by $2\cdot\begin{vmatrix}
a & z\\
\overline{z} & b
\end{vmatrix}$. We fix the maximal isotropic subspace consisting of elements of the form
\[
e_1 \otimes \begin{pmatrix}
0 & 0 & 0\\
0 & 0 & z\\
0 & \overline{z} & 0 
\end{pmatrix} + e_1 \otimes \begin{pmatrix}
0 & 0 & 0\\
0 & 0 & 0\\
0 & 0 & b_1
\end{pmatrix} + e_2 \otimes \begin{pmatrix}
0 & 0 & 0\\
0 & 0 & 0\\
0 & 0 & b_2
\end{pmatrix}.
\]
With this choice of polarization, the action of $U_1/U_1(2) = \langle e_1,e_2\rangle \otimes \langle e \rangle$ is given by
\[
\Pi(u)f(z,b_1,b_2) = \psi(u_1b_2 - u_2b_1) f(z,b_1,b_2),
\]
where $u = (u_1e_1 + u_2e_2) \otimes e$. We see that $U_1/U_1(2)$ acts trivially on functions supported on the subspace
\[
\{e_1 \otimes 
\begin{pmatrix}
0 & 0 & 0\\
0 & 0 & z\\
0 & \overline{z} & 0 
\end{pmatrix}: z\in K\}.
\]
It follows that
\[
(W_{f_2})_{U_1/U_1(2)} = W,
\]
where $W$ is the Heisenberg representation associated with the symplectic space $ \langle e_1,e_2\rangle \otimes K$; here we identify $z\in K$ with 
\[\begin{pmatrix}
0 & 0 & 0\\
0 & 0 & z\\
0 & \overline{z} & 0 
\end{pmatrix}.
\]
Unraveling the definitions, we see that $w \in K^\times \subset T'$ acts on $z\in K$ by
\[
(w,z) \mapsto \dfrac{z}{w},
\]
and $\mathbb{Z}/2\mathbb{Z}$ acts by Galois conjugation. Recall that we also have a $\GL_2$ action on the orbit $\Omega_2 \cap V_2$: an element $g \in L_1 = \GL_2$ acts by $\det(g)^{-1}$. In summary, we have
\[
(\Pi_2/\Pi_3)_{U_1} = \cind_{RU}^{L_1\times G'} W.
\]
We now recall the work of Roberts \cite{roberts1996theta}: there is a Weil representation $\omega$ of $R$ which induces to a representation $\tilde{\omega}$ of $\tilde{R} = L_1 \times T'$. The correspondence which arises from $\tilde{\omega}$ can be thought of as a similitude version of the usual $\SL_2 \times \SO(K)$ corrsepondence. As noted before, we have $M_{f_2} \cap \tilde{R} = R$, so we are precisely in the situation studied in \cite{roberts1996theta}.

An application of Schur's Lemma now shows that the action of $R$ on $W$ is a twist (by a character of $R$) of the action of $R$ on $\omega$. However, as one checks directly, every character of $R$ is a restriction of a character of $\tilde{R}$. We thus get
\[
(\Pi_2/\Pi_3)_{U_1} = i_{L_1\times B'}^{L_1\times G'}(\eta \otimes \tilde{\omega} ),
\]
where $\eta$ is a character yet to be determined. In fact, in \S \ref{subs_oddsnends}, we show that $\eta$ is the trivial character. Thus, we have
\begin{prop}
\label{prop_3step}
As a representation of $L_1 \times G'$, $r_{U_1}(\Pi)$ has a filtration with successive (top to bottom) subquotients
\begin{gather}
\delta_{Q_1}^{-1/2}\Pi_{N_1}  = |\mathrm{det}| \cdot \mathbb{1}_{A_2} \,\oplus\, \omega_{K/F}|\mathrm{det}|\cdot \Pi_{A_1}\label{eq_3step_top}\tag{T1}\\
i_{\overline{B} \times B'}^{L_1 \times G'}(\delta_1 \otimes C_c(F^\times))\label{eq_3step_middle}\tag{M1}\\
i_{L_1\times B'}^{L_1\times G'}(\tilde{\omega}).\label{eq_3step_bottom}\tag{B1}
\end{gather}
Recall that $\mathbb{1}_{A_2}$ is the trivial representation of $A_2$, whereas $\Pi_{A_1}$ is a principal series representation of  $A_1$. Furthermore, $|\mathrm{det}|$ is the standard determinant of $L_1 \cong \GL_2$ and $\delta_{Q_1} = |\mathrm{det}|^5$ is the modular character of $Q_1$. The center of $L_1 \cong \GL_2$ acts trivially on $\Pi_{A_2}$ and $\Pi_{A_1}$.
In \S \ref{subs_oddsnends}, we show that $\delta_1 = 1 \otimes \omega_{K/F}$.
\end{prop}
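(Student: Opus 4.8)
The plan is to assemble the three coinvariant computations carried out in this subsection into one three-step filtration of $r_{U_1}(\Pi)$ and then to pin down the normalising factors — chiefly the modular character $\delta_{Q_1}$ — that convert the unnormalised statements above into \eqref{eq_3step_top}, \eqref{eq_3step_middle} and \eqref{eq_3step_bottom}. Since $U_1$-coinvariants is an exact functor, applying it to the $N_1$-filtration \eqref{eq_Pi_filtracija} gives the two short exact sequences displayed above together with $(\Pi/\Pi_1)_{U_1}=\Pi_{N_1}$; these exhibit $\Pi_{U_1}$ as an iterated extension with graded pieces $\Pi_{N_1}$, $(\Pi_1/\Pi_2)_{U_1}$ and $(\Pi_2/\Pi_3)_{U_1}$ from top to bottom, and passing to the normalised Jacquet module multiplies every piece by $\delta_{Q_1}^{-1/2}$.

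First I would compute $\delta_{Q_1}$ from the graded pieces of $\mathrm{Lie}(U_1)$. With $L_1\cong\GL_2$ normalised so that $\langle e_1,e_2\rangle$ is the standard representation, the inclusion $L_1\hookrightarrow\SL_3$ is $g\mapsto\mathrm{diag}(g,\det(g)^{-1})$, so $e_3$ transforms by $\det^{-1}$ and $e_3^*$ by $\det$. Hence $L_1$ acts on $\mathfrak u_1(1)=\langle e_1,e_2\rangle\otimes\langle e\rangle$ by the standard representation, on $\mathfrak u_1(2)=\langle e_3^*\otimes e^*\rangle$ by $\det$, and on $\mathfrak u_1(3)=\mathfrak h(3)\cong\langle e_1,e_2\rangle\otimes\langle e_3^*\rangle$ by the standard representation twisted by $\det$; taking top exterior powers and absolute values gives $\delta_{Q_1}=|\det|^{1+1+3}=|\det|^5$. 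In particular $\delta_{Q_1}^{-1/2}\Pi_{N_1}=|\det|\,\mathbb{1}_{A_2}\oplus\omega_{K/F}|\det|\cdot\Pi_{A_1}$, which is \eqref{eq_3step_top}; the triviality of the central $L_1$-action on the two summands is inherited from the description of $\Pi_{N_1}$ in \cite{Savin1994} and \cite[Proposition~8.4]{gan2005minimal}.

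For the middle piece I would use the M\oe glin--Waldspurger identification $\Pi_1/\Pi_2=C_c(\Omega_1)$, pass to $\Omega_1\cap V_1$ after taking $U_1(2)$-coinvariants as in \cite[Lemma~2.2]{magaard1997exceptional}, and invoke the transitivity of $L_1\times G'$ on $\Omega_1\cap V_1$, so that Mackey theory presents the result as $i_{\overline B\times B'}^{L_1\times G'}(\delta_1\otimes C_c(F^\times))$, all modular bookkeeping (including the $\delta_{Q_1}^{-1/2}$-twist, restricted to the torus of $L_1$) being folded into $\delta_1$; the explicit value $\delta_1=1\otimes\omega_{K/F}$ is deferred to \S\ref{subs_oddsnends}. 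For the bottom piece, taking $U_1(2)$-coinvariants of $\Pi_2/\Pi_3=C_c(\Omega_2;W_{f_2})$ restricts the support to $\Omega_2\cap V_2$, whose stabiliser in $L_1\times G'$ is $RU$, so $(\Pi_2/\Pi_3)_{U_1}=\cind_{RU}^{L_1\times G'}((W_{f_2})_{U_1})$; with the polarisation fixed above one checks that $U_1/U_1(2)$ acts trivially on functions supported on the $z$-line, so $(W_{f_2})_{U_1}=W$ is the Heisenberg representation of $\langle e_1,e_2\rangle\otimes K$. Roberts' description \cite{roberts1996theta} of the similitude $\SL_2\times\Ort(K)$ correspondence then identifies $W$, up to a character of $R$, with the Weil representation whose induction to $\tilde R=L_1\times T'$ is $\tilde\omega$; since every character of $R$ extends to $\tilde R$, this gives $i_{L_1\times B'}^{L_1\times G'}(\eta\otimes\tilde\omega)$ with $\eta=1$ again deferred to \S\ref{subs_oddsnends}. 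Assembling the three layers, with their normalisations, yields the asserted filtration.

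The step I expect to be the main obstacle is the bottom layer: producing a correct explicit model for the Heisenberg representation $W_{f_2}$ — choosing the polarisation inside $\langle e_1,e_2\rangle\otimes\Delta^{\bot}$ and verifying that $U_1/U_1(2)$ acts through $\psi(u_1b_2-u_2b_1)$ — and then matching the resulting $R$-module with Roberts' similitude Weil representation. Assembling the filtration and computing $\delta_{Q_1}$ are routine by comparison, and the residual characters $\delta_1$ and $\eta$ are, as the statement signals, settled separately.
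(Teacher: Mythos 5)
Your proposal takes essentially the same route as the paper: apply the exact $U_1$-coinvariants functor to the M\oe glin--Waldspurger filtration, identify the top layer with $\Pi_{N_1}$ via \cite{Savin1994} and the exponents of \cite{gan2005minimal}, restrict the middle and bottom layers to $\Omega_1\cap V_1$ and $\Omega_2\cap V_2$, compute $(W_{f_2})_{U_1}$ from the explicit polarization, and invoke Roberts' similitude correspondence to recognize $\tilde\omega$, deferring $\delta_1$ and $\eta$ to \S\ref{subs_oddsnends} exactly as the paper does. The only (harmless) slip is in the middle layer: since $U_1(2)\subseteq N_1(2)$ already acts trivially on $\Pi_1/\Pi_2=C_c(\Omega_1)$, the restriction of support to $\Omega_1\cap V_1$ comes from the $U_1/U_1(2)$-coinvariants, i.e.\ from killing the characters nontrivial on $\mathfrak u_1(1)$, not from $U_1(2)$-coinvariants.
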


\subsubsection{The Fourier--Jacobi period}
We digress slightly to describe the Fourier--Jacobi period of the minimal representation $\Pi$. Although it is not required for the main results of the present paper (i.e.~for the proof of Howe duality), the Fourier--Jacobi period becomes useful in various similar settings. Since the computation is similar to the one we just performed to obtain $\Pi_{U_1}$, we take a moment to describe it here.

Recall the filtration \eqref{eq_U1_filtration}: $U_1$ is a three-step nilpotent group, and the quotient $U_1/U_1(3)$ is a three-dimensional Heisenberg group. Let $\psi$ be a character of its center $U_1(2)/U_1(3)$. Our goal is to describe the space of $\psi$-twisted coinvariants
\[
\Pi_{U_1(2),\psi},
\]
i.e.~the maximal quotient of $\Pi$ on which $U_1(2)/U_1(3)$ acts by $\psi$. Notice that this is (in addition to being a $G'$ module) a module for the Jacobi group, $FJ =Q_1^{\mathrm{der}}/U_1(3)$. Here $Q_1^{\mathrm{der}}$ denotes the derived group of $Q_1$.

Using the notation from \eqref{eq_Pi_filtracija}, we have $\Pi_{U_1(2),\psi} = (\Pi_2/\Pi_3)_{U_1(2),\psi}$. Recall that $\Pi_2/\Pi_3 = C_c(\Omega_2; W_{f_2})$. To find the $U_1(2)$ coinvariants we looked at $C_c(\Omega_2\cap V_2; W_{f_2})$, where $V_2$ was the orthogonal complement of $\mathfrak{u}_1(2)$ in $\mathfrak{h}(-2)$; this was identified with the space $J_0$ of traceless elements in $J$. However, we are now looking for the \emph{twisted} coinvariants. We get
\[
C_c(\Omega_2; W_{f_2})_{U_1(2),\psi} = C_c(\Omega_2\cap J_1; W_{f_2}).
\]
Here $J_1$ is the set of trace $1$ elements in $J$. One checks that $G'$ has two orbits on $\Omega_2\cap J_1$ (the set of rank $1$, trace $1$ elements in $J$). Thus, as a representation of $G' = \PU_3(K) \rtimes \mathbb{Z}/2\mathbb{Z}$, the space $C_c(\Omega_2\cap J_1; W_{f_2})$ splits into a direct sum of two induced representations. Indeed, one choice of representatives for these orbits is
\[
i = \begin{pmatrix}
\phantom{0} & & \phantom{0}\\
\phantom{0} & 1 & \phantom{0}\\
\phantom{0} & & \phantom{0}
\end{pmatrix}, \qquad j =
\begin{pmatrix}
\varepsilon & & 1/2\\
\phantom{0} & \phantom{ } & \phantom{0}\\
1/2 & & 1/(4\varepsilon)
\end{pmatrix}
\]
where $\varepsilon$ is an element of $F^\times$ which is not in the image of the norm map $N:K^\times \to F^\times$. (Indeed, $j$ cannot be written as $xx^*$ for some $x \in K^3$, which shows that it is not in the same orbit as $i$). The stabilizer of $i$ (resp.~$j$) in $\PU_3(K)$ is a unitary group in two variables, which we denote by $\text{U}(2)_i$ (resp.~$\text{U}(2)_j$). We thus have
\[
\Pi_{U_1(2),\psi} = i_{\text{U}(2)_i\rtimes \mathbb{Z}/2\mathbb{Z}}^{G'}(W_i)\ \oplus \ i_{\text{U}(2)_i\rtimes \mathbb{Z}/2\mathbb{Z}}^{G'}(W_j).
\]
Here $W_i$ (resp.~$W_j$) denotes the corresponding Weil representation, i.e.~the fiber at $i$ (resp.~$j$).

For example, the stabilizer of $i$ in $\PU_3(K)$ consists of all elements of the form
\[
\begin{pmatrix}
* & & *\\
 & 1 & \\
* & & *
\end{pmatrix} \in \text{U}_3(K).
\]
Thus we may identify the group $U(2)_i$ with the unitary group
\[
\{g \in \GL_2(K): \quad g\begin{pmatrix}
& 1\\
1 & 
\end{pmatrix}g^* = \begin{pmatrix}
& 1\\
1 & 
\end{pmatrix}\}
\]
in the obvious way. We obtain an explicit model of $W_i$ the same way we found $W_{f_2}$ above. Here it is convenient to fix $f_2 = i$; then
\[
\Delta = \{
\begin{pmatrix}
 & x & \\
\overline{x} & b & z\\
 & \overline{z} &
\end{pmatrix} \in J
\}\quad \text{and} \quad 
\Delta^\bot = \{\begin{pmatrix}
a & \phantom{0} & \overline{y}\\
 &  & \\
y &  & c
\end{pmatrix} \in J\}.
\]
Here $\Delta^\bot$ can be identified with the space $
I = \{
\begin{pmatrix}
a  & \overline{y}\\
y & c
\end{pmatrix}\}$ of $2\times 2$ Hermitian matrices. Thus the representation $W_i$ can be realized on the space $C_c(I)$, where the action of $U(2)_i$ on $A \in I$ is given by $(g, A) \mapsto gAg^*$.


\subsection{Heisenberg parabolic}
We compute $\Pi_{U_2}$ using the same general approach. Since $N_2$ is a Heisenberg group, there are only two subquotients we need to consider: $\Pi/\Pi_1$ and $\Pi_1/\Pi_2$. Just like in the case of the three-step parabolic, we have $(\Pi/\Pi_1)_{U_2} = (\Pi_{N_2})_{U_2} = \Pi_{N_2}$. Again, we may compute the $N_2$-coinvariants following \cite[\S 4]{Savin1994}. We get
\[
\Pi_{N_2} =|\mathrm{det}|^{\frac{3}{2}}\cdot \Pi_{C_{3}} \,\oplus\, \omega_{K/F}|\mathrm{det}|^2.
\]
Here $\Pi_{C_{3}}$ denotes the minimal representation of $M_2$ (corresponding to the $F_4$ diagram with the $\alpha_2$ vertex removed).

The description of $(\Pi_1/\Pi_2)_{U_2}$ is similar to the one we had in the three-step case: we have
\[
(\Pi_1/\Pi_2)_{U_2} = C_c(\Omega\cap V).
\]
Recall that $\Omega \cap V$ is a single orbit for $L_2 \times G'$, and that
\[
\Omega \cap V = \{(X,Y) \in J_0 \oplus J_0:\quad  r(X),r(Y) \leq 1,\quad \dim\langle X,Y\rangle  =1 \}.
\]
For the sake of concreteness, we consider the line through $e_3 \otimes xx^* \in \Omega \cap V$, where $x = (1,0,0)^*$. The stabilizer of this line is again the product $\overline{B} \times B'$ of Borel subgroups (here we are abusing notation by using $\overline{B} = T\overline{U}$ to denote the group of all lower-triangular matrices, but this time in $L_2$). The group $T\times T'$ acts transitively on the above line, which we identify with $\F^\times$: The action of
\[
\left(\begin{pmatrix}
a & \\
 & b
\end{pmatrix} , z\right) \in {T} \times K^\times
\]
on $C_c(F^\times)$ is translation by $a\cdot N_{K/F}(z)$. We deduce that 
\[
C_c(\Omega_1\cap V_1) = i_{\overline{B} \times B'}^{L_1 \times G'}(\delta_2 \otimes C_c(F^\times))
\]
where $\delta_2$ is a character yet to be determined. To summarize, we have
\begin{prop}
\label{prop_Heisenberg}
As a representation of $L_2 \times G'$, $r_{U_2}(\Pi)$ has a filtration with successive (top to bottom) subquotients
\begin{gather}
\delta_{Q_2}^{-1/2}\Pi_{N_2} = \Pi_{C_{3}} \,\oplus\, \omega_{K/F}\cdot|\mathrm{det}|^\frac{1}{2}.\label{eq_Heis_top}\tag{T2}\\
i_{\overline{B} \times B'}^{L_2 \times G'}(\delta_2 \otimes C_c(F^\times)).\label{eq_Heis_bottom}\tag{B2}
\end{gather}
Here $\delta_{Q_2} = |\mathrm{det}|^3$ denotes the modular character of $Q_2$. The center of $L_2 \cong \GL_2$ acts trivially on $\Pi_{C_{3}}$. In \S \ref{subs_oddsnends} we show that $\delta_2 = 1 \otimes \omega_{K/F}$
\end{prop}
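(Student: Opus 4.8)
The plan is to mirror the proof of Proposition~\ref{prop_3step}, simplified by the fact that $N_2$ is only a two-step (Heisenberg) group, so that the filtration of $\Pi$ along $N_2$ has just two layers and no auxiliary Heisenberg representation enters the bottom one. From the filtration $\{1\}\subseteq Z\subseteq N_2$ one obtains a short exact sequence of $(Q_2\times G')$-modules $0\to\Pi_1\to\Pi\to\Pi_{N_2}\to 0$ with $\Pi_1=\ker(\Pi\to\Pi_{N_2})$; applying the exact functor of (un-normalized) $U_2$-coinvariants and using that $U_2\subseteq N_2$ acts trivially on $\Pi_{N_2}$ gives
\[
0\to(\Pi_1)_{U_2}\to\Pi_{U_2}\to\Pi_{N_2}\to 0.
\]
Since $r_{U_2}=\delta_{Q_2}^{-1/2}\otimes(\,\cdot\,)_{U_2}$ and $\delta_{Q_2}=|\det|^3$ (immediate from the description of $U_2$ as a five-dimensional Heisenberg group with Levi $\GL_2$), it suffices to identify the two outer terms and twist by $|\det|^{-3/2}$.

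For the top layer I would import the computation of the $N_2$-coinvariants of the minimal representation: by the method of \cite[\S4]{Savin1994}, with the exponents taken from \cite[Prop.~8.4]{gan2005minimal}, one has $\Pi_{N_2}=|\det|^{3/2}\Pi_{C_3}\oplus\omega_{K/F}|\det|^2$ as an $(L_2\times G')$-module, where $\Pi_{C_3}$ is the minimal representation of $M_2$ (the Levi of relative type $C_3$ obtained by deleting $\alpha_2$ from the relative diagram). Its restriction to the center of $L_2\cong\GL_2$ is trivial because a minimal representation has trivial central character and $Z(L_2)$ maps into the relevant torus of $M_2$. Twisting by $\delta_{Q_2}^{-1/2}=|\det|^{-3/2}$ yields \eqref{eq_Heis_top}.

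For the bottom layer, the M\oe glin--Waldspurger analysis of $\Pi$ along $P_2$ (\cite{moeglin1987modeles}, applied exactly as in the three-step case) identifies $\Pi_1$ with $C_c(\Omega)$, the Schwartz functions on the minimal $M_2$-orbit $\Omega\subseteq\mathfrak h(-1)$, with $N_2$ acting fibrewise through the characters $\psi_f$ attached to $f\in\Omega$; no Heisenberg representation appears, because $\Omega$ lies in $\mathfrak h(-1)\cong(N_2/Z)^{*}$ and $N_2/Z$ is abelian. As $U_2\subseteq N_2$ then acts on $C_c(\Omega)$ fibrewise by $\psi_f|_{U_2}$, the space of $U_2$-coinvariants is $C_c$ of the locus where this character is trivial, namely $\Omega\cap V$ with $V=\mathfrak u_2(1)^{\perp}$; hence $(\Pi_1)_{U_2}=C_c(\Omega\cap V)$. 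Now apply the orbit lemma proved above (after \cite{magaard1997exceptional}): $L_2\times G'$ acts transitively on $\Omega\cap V$, and the line through $e_3\otimes xx^{*}$ with $x=(1,0,0)^{*}$ has stabilizer $\overline{B}\times B'$, a product of Borel subgroups, on which $T\times T'$ acts through $F^{\times}$ by $a\cdot N_{K/F}(z)$. This rewrites $C_c(\Omega\cap V)$ as $i_{\overline{B}\times B'}^{L_2\times G'}(\delta_2'\otimes C_c(F^{\times}))$ for a suitable character $\delta_2'$; twisting by $\delta_{Q_2}^{-1/2}$, which may be absorbed into the inducing datum since $|\det|^{-3/2}$ is a character of all of $L_2$, produces \eqref{eq_Heis_bottom}.

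The one point that is not routine is pinning down the character $\delta_2$ in \eqref{eq_Heis_bottom}: this means keeping precise track of the modular twist together with the M\oe glin--Waldspurger twist, or, more economically, matching central characters against a single identifiable constituent of $\Pi$. I expect this to be the main obstacle, and it is deferred to \S\ref{subs_oddsnends}, where $\delta_2$ is shown (in parallel with $\delta_1$) to equal $1\otimes\omega_{K/F}$; the remaining steps above are a direct transcription of the three-step argument.
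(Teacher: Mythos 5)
Your proposal follows the same route as the paper: split $r_{U_2}(\Pi)$ into the layer coming from $\Pi_{N_2}$ (computed via \cite[\S 4]{Savin1994} and the exponents of \cite{gan2005minimal}) and the layer coming from the orbit $\Omega$, identify the latter with $C_c(\Omega\cap V)$, use transitivity of $L_2\times G'$ on $\Omega\cap V$ and the stabilizer $\overline{B}\times B'$ of the chosen line, and defer the character $\delta_2$ to \S\ref{subs_oddsnends}. The top layer and the final orbit computation are fine.

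There is, however, one genuine misstep in the bottom layer: you assert that $\Pi_1=\ker(\Pi\to\Pi_{N_2})$ is identified with $C_c(\Omega)$, on the grounds that ``no Heisenberg representation appears, because $\Omega\subset\mathfrak h(-1)\cong(N_2/Z)^*$ and $N_2/Z$ is abelian.'' That is false as a statement about $\Pi$ restricted to $P_2$: the M\oe glin--Waldspurger filtration here has \emph{three} steps, $\Pi_2\subseteq\Pi_1\subseteq\Pi$, where $\Pi_1/\Pi_2\cong C_c(\Omega)$ but $\Pi_2$ — the piece on which the one-dimensional center $Z$ of $N_2$ acts through its nontrivial characters, realized via the oscillator representation of the Heisenberg group $N_2$ — is certainly nonzero for the minimal representation (its twisted coinvariants $\Pi_{Z,\psi}$ are exactly the Fourier--Jacobi module, analogous to the twisted piece computed in \S 3.1.1 for the three-step parabolic). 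If your claim were literally true it would say $\Pi_{Z,\psi}=0$, which contradicts minimality (nonzero rank). The correct reason this piece does not contribute to $r_{U_2}(\Pi)$ is not that $N_2/Z$ is abelian, but that $Z$ is the center of $U_2$, hence $Z\subseteq U_2$, and $(\Pi_2)_Z=0$ forces $(\Pi_2)_{U_2}=0$; only then does exactness of the Jacquet functor give $(\Pi_1)_{U_2}\cong\bigl(\Pi_1/\Pi_2\bigr)_{U_2}=C_c(\Omega)_{U_2}=C_c(\Omega\cap V)$. With this correction inserted, your argument coincides with the paper's, and the remaining issue (pinning down $\delta_2=1\otimes\omega_{K/F}$) is handled in \S\ref{subs_oddsnends} exactly as you anticipate.
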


\subsection{$B_3$ parabolic}
\label{subs_B3parabolic}
This case is entirely analogous to the previous two, so we just briefly sketch the results.

First, the top part in the filtration of $\Pi_{U'}$ is simply $\Pi_{N'}$.
Secondly, recall that in this case $\Omega_1'$ does not intersect $V_1'$, so the middle part of the filtration vanishes. The computation of the bottom (subrepresentation) part is equivalent to the one we described in detail in \S \ref{subs_Jacquet1}; in fact, the bottom part is induced from the same representation as the bottom part in Proposition \ref{prop_3step}. We omit the details and simply state the results:
\begin{prop}
\label{prop_D4}
As a representation of $G \times T'$, $r_{U'}(\Pi)$ has a filtration with successive (top to bottom) subquotients
\begin{gather}
\delta_{B'}^{-1/2}\Pi_{N'}  = \Pi_{B_3} \cdot |N_{K/F}(z)|\label{eq_D4_top}\tag{T3}\\
i_{L_1 \times T'}^{G \times T'}(\tilde{\omega}).\label{eq_D4_bottom}\tag{B3}
\end{gather}
Here $\delta_{B'} = |N_{K/F}(z)|^2$ denotes the modular character of $B'$. 
\end{prop}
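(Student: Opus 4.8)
The plan is to follow the template already established for the three-step and Heisenberg parabolics in \S\ref{subs_Jacquet1} and the Heisenberg parabolic subsection, specialized to the $B_3$ parabolic $P' = M'N'$. Recall that $N'$ is a two-step nilpotent group with center $\mathfrak h(2)$, so the filtration of $\Pi$ with respect to $N'$ has only two graded pieces: the $N'$-coinvariants $\Pi_{N'}$ on top, and a piece built from the minimal $M'$-orbit $\Omega_2'$ in $\mathfrak h(-1)$ below. (The piece coming from $\Omega_1'$ is absent because, as noted just before the statement, $\Omega_1' \cap V_1' = \emptyset$; this is exactly why the "middle" term present in Proposition \ref{prop_3step} does not appear here.) Applying the $U'$-coinvariants functor to the short exact sequence $0 \to \Pi_1 \to \Pi \to \Pi_{N'} \to 0$ and using $(\Pi_{N'})_{U'} = \Pi_{N'}$, we reduce to identifying $\Pi_{N'}$ as a $G \times T'$-module up to the normalization twist, and to identifying $(\Pi_1)_{U'}$.

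For the top piece \eqref{eq_D4_top}: compute $\Pi_{N'}$ following \cite[\S 4]{Savin1994}, exactly as $\Pi_{N_1}$ and $\Pi_{N_2}$ were computed; removing the vertex $\alpha_1$ from the relative $F_4$ diagram leaves a $B_3$ component, and $\Pi_{N'}$ is (a twist of) the minimal representation $\Pi_{B_3}$ of the corresponding Levi, together with the abelian part on which $T' \ni z$ acts through $|N_{K/F}(z)|$. The exponent is pinned down by the weight data in Proposition 8.4 of \cite{gan2005minimal}, and dividing by $\delta_{B'}^{1/2} = |N_{K/F}(z)|$ yields the stated formula. For the bottom piece \eqref{eq_D4_bottom}: the key observation, already flagged in the text preceding the proposition, is that the relevant Heisenberg-type data attached to $f_2 \in \Omega_2'$ is \emph{the same} as in the three-step case, so the argument of \S\ref{subs_Jacquet1} transports verbatim. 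Concretely, $(\Pi_1)_{U'}$ is a compactly induced representation $\cind$ from a mirabolic-type subgroup of $G \times T'$ of the Heisenberg representation $W_{f_2}$; taking further $U'(1)$-coinvariants of $W_{f_2}$ (with a suitable polarization, as in the model built there) collapses it to the Weil representation $\omega$ of $L_1 \times T'$, and Roberts's theory \cite{roberts1996theta} together with the fact that $G \times T'$ acts transitively on $\Omega_2' \cap V_2'$ (the last Lemma before the statement) identifies $(\Pi_1)_{U'}$ with $i_{L_1 \times T'}^{G \times T'}(\tilde\omega)$, up to a character of the stabilizer. As in the earlier cases, every such character extends to $G \times T'$, and the relevant twist turns out to be trivial by the computation deferred to \S\ref{subs_oddsnends}.

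The bookkeeping obstacle — and the step I expect to absorb most of the work — is the precise normalization: tracking the modular characters $\delta_{N'}$, $\delta_{B'}$, and the Jacobian factors introduced when passing from $N'$-coinvariants to $U'$-coinvariants, so that the half-density twist $\delta_{B'}^{-1/2}$ lands correctly and the power $|N_{K/F}(z)|$ on the top piece comes out with the right exponent. This is the same delicate point that required \S\ref{subs_oddsnends} in the $P_1$ and $P_2$ cases, and here one must additionally be careful that the torus $T' = K^\times \rtimes \mathbb Z/2\mathbb Z$ acts on $V_2' \cong \mathbb O_0 \otimes F$ through $z \mapsto 1/N_{K/F}(z)$ (as recorded in \S\ref{subs_parabolicsD4}), which is what feeds into both the support computation and the character on $C_c$ of the orbit. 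Since the structural input (only two graded pieces, $\Omega_1' \cap V_1' = \emptyset$, transitivity on $\Omega_2' \cap V_2'$, and the identification of $V_2'$ with $\mathbb O_0 \otimes F$) is all in place, the remainder is a routine — if somewhat lengthy — repetition of the arguments of \S\ref{subs_Jacquet1}, which is why we only sketch it.
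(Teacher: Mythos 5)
Your proposal matches the paper's own treatment, which simply declares the case ``entirely analogous'' to the three-step and Heisenberg parabolics: the middle piece dies because $\Omega_1'\cap V_1'=\emptyset$, the top is $\Pi_{N'}$ computed via \cite{Savin1994} with exponents from \cite{gan2005minimal}, and the bottom is induced from the very same Weil representation $\tilde\omega$ as in Proposition \ref{prop_3step}, with the residual character twist settled in \S\ref{subs_oddsnends}. Only a cosmetic slip: $\Omega_2'$ is the minimal orbit in $\mathfrak h(-2)$, not $\mathfrak h(-1)$, and the collapse of $W_{f_2}$ is via $U'/U'(2)$-coinvariants.
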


We take a moment to describe the restriction of the representation $\Pi_{B_3}$ to $G \times T'$. Recall that $M'$ is the Levi factor of the parabolic $P'$ which corresponds to the $D_4$ part of the Dynkin diagram (i.e.~$B_3$ in the relative diagram). In other words, $M'$ is the group $D_4^E$, the simply connected quasi-split form of $\text{Spin}_8$ attached to the \'{e}tale cubic algebra $E=K+F$ ; these groups are described in detail in Section 2 of \cite{gan2014twisted}.

As mentioned above, $\Pi_{B_3}$ denotes the minimal representation of the group $M'$. Note that $K^\times \subset T'$ acts trivially on $\Pi_{B_3}$. However, the action of the Galois group $\mathbb{Z}/2\mathbb{Z}$ is non-trivial; in fact, this is precisely the situation studied in \cite{HMS} and \cite{GGJ}. There is a dual pair $G_2 \times S_E$ inside $D_4^E$ (here $S_E$ denotes the twisted form of $S_3$ attached to $E$). In \cite{GGJ}, the authors use the correspondence arising in this way to construct the so-called cubic unipotent A-packets of $G_2$. In our case, $S_E \cong \mathbb{Z}/2\mathbb{Z}$, so the corresponding A-packet contains two elements. One of them is a supercuspidal representation; the other is the Langlands quotient of $i_{Q_1}^{G_2}(\lvert\det\rvert\otimes\tau)$, with $\tau$ equal to the tempered representation $1 \times \omega_{K/F}$ of $\GL_2$. See Proposition 6.2 of \cite{GGJ} for a more detailed description of local A-packets arising in this way.

\subsection{Filling in the details}
\label{subs_oddsnends}
In this section, we determine the characters $\delta_1$, $\delta_2$, and $\eta$ that appear in the filtrations discussed above (recall that $\eta$ is introduced in the discussion preceding Proposition \ref{prop_3step}).

Both the bottom piece of $r_U'(\Pi)$ \eqref{eq_D4_bottom} and the bottom piece of $r_{U_1}(\Pi)$ \eqref{eq_3step_bottom} are induced from the Weil representation $\tilde{\omega}$. The similitude correspondence between $\GL_2(F)$ and $\GO(K) = K^\times \rtimes \mathbb{Z}/2\mathbb{Z}$ established by $\tilde{\omega}$ amounts to the usual base change $K^\times \longleftrightarrow \GL_2(F)$. Let $\chi$ be a character of $K^\times$.
By Lemma \ref{lem_base_change}, there are two possibilities:
\begin{enumerate}[(i)]
\item $\chi = \chi^\sigma$. In this case, $\chi$ extends to two characters of $K^\times \rtimes \mathbb{Z}/2\mathbb{Z}$, only one of which appears in the correspondence. We label that character $\chi^+$, and we let $\chi^-$ be the other one. Then $\chi^+$ lifts to the principal series $\chi_1 \times \chi_2$ (see Lemma \ref{lem_base_change}).
\item $\chi \neq \chi^\sigma$. In this case, $\chi$ lifts to a cuspidal representation $\rho_\chi$ of $\GL_2(F)$.
\end{enumerate}
See Section 7 of \cite{roberts1999nonarchimedean} for a brief account of this correspondence.

We now prove that the character $\eta$ introduced in the discussion preceding Proposition \ref{prop_3step} is trivial. First, note that $\eta$ is the restriction of a character $|\cdot|^s \otimes \xi$ of $\tilde{R}$, where $s\in \mathbb{R}$ and $\xi$ is a unitary character of $T'$. The above description of the $T' \leftrightarrow \GL_2(F)$ correspondence shows that
\[
|\cdot|^s\chi_1 \times |\cdot|^s\chi_2\quad \otimes\quad  i_{B'}^{G'}(\xi\chi^+)
\]
appears as a quotient of $i_{L_1\times B'}^{L_1\times G'}(\eta \otimes \tilde{\omega})$.
(To simplify notation, we write $\chi$ instead of $\chi^+$ in the rest of this section.) For a generic choice of $\chi$ we may assume that $
i_{Q_1}^G(|\cdot|^s\chi_1 \times |\cdot|^s\chi_2)$ and  $i_{B'}^{G'}(\xi\chi)$ are irreducible, so we get
\[
\Pi \twoheadrightarrow i_{Q_1}^G(|\cdot|^s\chi_1 \times |\cdot|^s\chi_2)\, \otimes\, i_{B'}^{G'}(\xi\chi).
\]
But now notice that $i_{B'}^{G'}(\xi\chi) = i_{B'}^{G'}(\xi^{-1}\chi^{-1}) =  i_{B'}^{G'}(\xi\cdot(\xi^{-2}\chi^{-1}))$ and we can apply the same reasoning to the character $\chi' = \xi^{-2}\chi^{-1}$.
This shows that
\[
|\cdot|^s\xi^{-2}\chi_1^{-1} \otimes |\cdot|^s\xi^{-2}\chi_2^{-1}
\]
needs to appear in the Jacquet module of $i_{Q_1}^G(|\cdot|^s\chi_1 \times |\cdot|^s\chi_2)$. Computing the said Jacquet module shows this to be possible only if $s=0$ and $\xi^2=1$.

It remains to prove that $\xi$ is trivial. Looking at the quotient \eqref{eq_Heis_top} in $r_{U_2}(\Pi)$ and applying Frobenius reciprocity, we see that
\[
\Pi \twoheadrightarrow i_{Q_2}^G(\omega_{K/F}\cdot |\textrm{det}|^\frac{1}{2})\, \otimes\, \mathbb{1},
\]
where $\mathbb{1}$ denotes the trivial representation of $\PU_3(K)$. The representation $i_{Q_2}^G(\omega_{K/F}\cdot |\textrm{det}|^\frac{1}{2})$ is of length $2$ (cf.~Proposition 4.1 in \cite{muic1997unitary}); from the above map we get
\[
\Pi  \twoheadrightarrow \pi \otimes \mathbb{1},
\]
where $\pi$ is the unique (Langlands) quotient of $i_{Q_1}^G(|\textrm{det}|\otimes (1\times \omega_{K/F}))$. Applying the Jacquet functor $r_{U_1}$ to the above map, 
and comparing wtih the subrepresentation \eqref{eq_3step_bottom} in the filtration $r_{U_1}(\Pi)$, we conclude that $\xi = 1$.

Once we have established that $\eta$ is the trivial character, it is not hard to determine $\delta_1$, the character that appears in the middle piece of the filtration \eqref{eq_3step_middle}. Recall that $\delta_1$ is a character of the diagonal torus in $L_1\cong \GL_2$. As explained above, the bottom piece of the filtration \eqref{eq_3step_bottom} shows that we have
\[
\label{eq_Pi_quotients}
\Pi \twoheadrightarrow i_{Q_1}^G(\chi_1 \times \chi_2) \otimes i_{B'}^{G'}(\chi). \tag{\textasteriskcentered}
\]
Here we are still assuming the choice of $\chi$ is such that both representations appearing on the right-hand side are irreducible. Applying the Jacquet module with respect to ${U_1}$, we see that, in addition to $\left(\chi_1 \times \chi_2\right) \otimes i_{B'}^{G'}(\chi)$ (which appears in the bottom piece of the filtration), $\Pi_{U_1}$ contains
\[
\left(\chi_1^{-1} \times \omega_{K/F}\right) \otimes i_{B'}^{G'}(\chi) \quad \text{and} \quad \left(\chi_2^{-1} \times \omega_{K/F}\right) \otimes i_{B'}^{G'}(\chi).
\]
These quotients come from the middle part of the filtration; in other words, we have
\[
i_{{\overline{B} \times B'}}^{{L_1} \times G'}(\delta_1 \otimes C_c(F^\times)) \twoheadrightarrow \left(\chi_i^{-1} \times \omega_{K/F}\right) \otimes i_{B'}^{G'}(\chi),
\]
for $i=1,2$. Using the Bernstein form of Frobenius reciprocity, and computing the appropriate Jacquet modules, we see that this is possible if and only if
\[
\delta_1 = 1 \otimes \omega_{K/F}.
\]
A similar argument can be used to determine $\delta_2$: we apply the Jacquet module $r_{U_2}$ to \eqref{eq_Pi_quotients}, observing that $i_{Q_1}^G(\chi_1 \times \chi_2) = i_{Q_2}^G(\chi_2 \times \omega_{K/F})$. Then certain quotients of $r_{U_2}(i_{Q_2}^G(\chi_2 \otimes \omega_{K/F}))$ come from the bottom of the filtration \eqref{eq_Heis_bottom}, and one verifies that $\delta_2 = 1\otimes \omega_{K/F}$.
\begin{rem}
\label{rem_middle_quotients}
The fact that $i_{\overline{B} \times B'}^{L_1 \times G'}(C_c(F^\times))$ is responsible for the \emph{two} quotients appearing above (even though we have a single orbit) is explained by the action of $\overline{B}\times B'$ on $C_c(F^\times)$. Recall that
\[
\left(\begin{pmatrix}
a & \\
 & b
\end{pmatrix} , (z,\tau)\right) \in {T} \times T'
\]
acts by $a^{-1}\cdot N_{K/F}(z)$, so we view this as an action of $F^\times \times N_{K/F}(K^\times)$. Since a character $\chi$ of $N_{K/F}(K^\times)$ (or equivalently, a Galois-invariant character of $K^\times$) extends to a character of $F^\times$ in two ways, both 
\[
\chi_1^{-1} \otimes \omega_{K/F} \quad \text{and} \quad \chi_2^{-1} \otimes \omega_{K/F}
\]
appear as quotients of $C_c(F^\times)$.
\end{rem}

\section{Howe duality}
\label{sec_Howe}

Our main result is the following theorem (Howe duality).
\begin{thm}
\label{thm_Howe}
\begin{enumerate}[(i)]
\item Let $\pi$ be an irreducible representation of $G_2$. If $\Theta(\pi) \neq 0$, then it is a representation of finite length, with a unique irreducible quotient $\theta(\pi)$.
\item For $\pi_1, \pi_2 \in \Irr(G_2)$,
\[
0 \neq \theta(\pi_1) \cong \theta(\pi_2) \Longrightarrow \pi_1 \cong \pi_2.
\]
\item If $\Theta(\pi) \neq 0$, then $\theta(\pi)$ is tempered if and only if $\pi$ is tempered. 
\item Let $\sigma \in \Irr(G')$. Then $\Theta(\sigma)$ is either $0$ or a representation of finite length.
\end{enumerate}
\end{thm}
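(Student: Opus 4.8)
The plan is to prove Theorem~\ref{thm_Howe} by the period ping-pong method of \cite{gan2021howe}, combined with the explicit Jacquet module computations of Section~\ref{sec_Jacquet}. I would organize the argument around the following scheme. First, establish the basic finiteness and Howe duality for \emph{tempered} representations using the complex-conjugation trick recalled in the introduction: since $\bar\Pi\cong\Pi$ and $\pi^\vee\cong\bar\pi$ for tempered $\pi$, one gets $\Theta(\bar\pi)\cong\overline{\Theta(\pi)}$, which supplies the key MVW-type input ("if $\sigma$ is a quotient of $\Theta(\pi)$ then $\sigma^\vee$ is a quotient of $\Theta(\pi^\vee)$") for free. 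One then runs the ping-pong between $G$ and $G'$: an irreducible quotient $\sigma$ of $\Theta(\pi)$ gives, by adjunction, a nonzero $\Hom_{G\times G'}(\Pi,\pi\boxtimes\sigma)$, hence $\pi$ is a quotient of $\Theta(\sigma^\vee)^\vee$ (using the MVW input on the $G'$ side), and iterating bounds the length of $\Theta(\pi)$ and forces a unique irreducible quotient. The admissibility of $\Theta(\pi)$ is already known from \cite[Lemme 2.III.4]{MVW} as quoted in \S\ref{subs_mini}.

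Second, handle the \emph{non-tempered} representations, which is where the Jacquet module computations enter. By the Langlands classification, a non-tempered irreducible $\pi$ of $G_2$ is the unique quotient of some $i_{Q}^{G}(\delta)$ with $Q\in\{Q_1,Q_2\}$ and $\delta$ an essentially tempered (in fact, by rank reasons, essentially square-integrable or a twisted character) representation of $\GL_2(F)$ in the positive Weyl chamber. Using second Frobenius reciprocity, $\Hom_{G}(i_Q^G(\delta),\Theta(\sigma)^\vee\,\text{-type object})$ is controlled by $r_{\bar U_Q}$ of the relevant theta module, and hence by $r_{U_Q}(\Pi)$, which is exactly what Propositions~\ref{prop_3step}, \ref{prop_Heisenberg}, and \ref{prop_D4} compute. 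Concretely, for $\sigma\in\Irr(G')$ one writes $\Theta(\sigma) = \Hom_{G'}(\Pi,\sigma)_{\text{smooth}}$ (or the isotypic-quotient description), applies $r_{U_1}$ and $r_{U_2}$, and reads off from the explicit filtrations (T1)--(B1), (T2)--(B2) that the Jacquet modules $r_{U_i}\Theta(\sigma)$ are finite length $L_i\times$(stuff)-modules; finiteness of all relevant Jacquet modules plus admissibility yields finite length of $\Theta(\sigma)$ itself. The same filtrations let one pin down $\theta(\sigma)$ explicitly in the principal-series case, recovering the statement from the introduction that $i(\chi)$ lifts to $i_{Q_1}^{G_2}(\tau)$ with $\tau$ the base-change lift of $\chi$ — here one uses that $\Omega_1'\cap V_1'=\emptyset$ so that $r_{U'}(\Pi)$ has only the pieces (T3), (B3), and matches exponents.

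Third, for part (iii) (temperedness is preserved), I would argue contrapositively in both directions using the Jacquet module computations: if $\pi$ is non-tempered, its Langlands data force, via $r_{U_i}(\Pi)$, that $\theta(\pi)$ has a Jacquet module exponent outside the tempered range, and conversely; alternatively one invokes the Casselman criterion on the exponents appearing in (T1)/(M1)/(B1) and their $G'$-analogues. Part (ii) (injectivity of $\pi\mapsto\theta(\pi)$) follows from the ping-pong together with the observation that the "first occurrence" indices are linked: if $\theta(\pi_1)\cong\theta(\pi_2)=\sigma$, then both $\pi_1$ and $\pi_2$ are quotients of $\Theta(\sigma^\vee)^\vee$, and a conservation/minimality argument (again powered by the explicit Jacquet modules, which show the relevant big theta lifts have a \emph{unique} irreducible quotient on the $G$ side as well) forces $\pi_1\cong\pi_2$.

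The main obstacle I expect is the non-tempered bookkeeping in the ping-pong: the clean complex-conjugation argument fails, so one must substitute the Jacquet-functor analysis, and the delicate point is to show that the unique irreducible quotient $\theta(\pi)$ of $\Theta(\pi)$ "goes back" correctly — i.e.\ that $\pi$ reappears as the unique irreducible quotient of (a dual of) $\Theta(\theta(\pi))$ — without an a priori MVW involution. This requires knowing precisely which constituents of $i_{Q_i}^{G}(\delta)$ and $i_{B'}^{G'}(\cdot)$ occur in the filtrations of $r_{U_i}(\Pi)$, including the subtle characters $\delta_1,\delta_2,\eta$ determined in \S\ref{subs_oddsnends}; any sign or twist error there would break the matching. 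A secondary technical point is ensuring the Heisenberg/Weil-representation pieces (B1), (B3) behave well under Jacquet functors — this is handled by the Roberts-type similitude theta analysis already set up in \S\ref{subs_Jacquet1}, so it should be routine but must be invoked carefully to keep track of central characters.
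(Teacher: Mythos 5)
Your non-tempered treatment (Langlands quotients, second Frobenius reciprocity, reading off the filtrations of $r_{U_1}(\Pi)$, $r_{U_2}(\Pi)$, $r_{U'}(\Pi)$, matching exponents, and getting (iii) from the exponent analysis) is essentially the paper's route. But your tempered/cuspidal step has a genuine gap. The "ping-pong" you describe --- $\sigma$ a quotient of $\Theta(\pi)$ implies $\pi$ is a quotient of $\Theta(\sigma^\vee)^\vee$, "and iterating bounds the length of $\Theta(\pi)$ and forces a unique irreducible quotient" --- does not prove anything by itself: that implication is compatible with $\Theta(\pi)$ having two (or infinitely many) inequivalent cuspidal quotients, each of which sends $\pi$ back. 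The period ping-pong of \cite{gan2021howe} that the paper actually runs is a \emph{dimension count of specific period spaces}, and its engine is a multiplicity bound you never produce: the identification of the Whittaker coinvariants $\Pi_{V,\psi_V}\cong \cind_S^{G'}\psi_S$ in \eqref{eq_Whittaker_period}, the Shalika coinvariants $\Pi_{S,\psi_S}$ together with the bound $\dim\Hom_G(\Pi_{S,\psi_S},\pi)\le 1$ of \eqref{eq_Shalika} for generic tempered $\pi$ (Lemma \ref{lem_ppp1}, Propositions \ref{prop_no2_gen_quotients}, \ref{prop_no2tempered_gen}), and, for non-generic tempered $\pi$, the $\psi_E$-twisted $N_2$-coinvariants of $\Pi$, the see-saw with $G_E=\Spin_8^E$ and $G'_{E,C}$, the degenerate principal series $I_{E,\omega_{K/F}}(1/2)$, and the finite-dimensionality and non-vanishing of $\Hom_{N_2}(\pi,\psi_E)$ (Lemmas \ref{lem_GrS}--\ref{lem_etale}, Propositions \ref{prop_no2tempered_nongen}, \ref{prop_nongeneric_quotient_implies_unique}). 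None of these ingredients appear in your proposal, and without them neither the uniqueness of the irreducible quotient nor injectivity (ii) for tempered representations is established; the complex-conjugation observation only supplies the MVW-type input, not the multiplicity bound.

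A second, related gap: you assert that "finiteness of all relevant Jacquet modules plus admissibility yields finite length of $\Theta(\sigma)$ itself." Jacquet modules give no control whatsoever over the cuspidal part of $\Theta(\sigma)$ (or of $\Theta(\pi)$), and admissibility alone does not imply finite length. This is exactly why the paper splits $\Theta(\pi)=\Theta(\pi)_{\mathrm{nc}}\oplus\Theta(\pi)_{\mathrm{c}}$, proves finiteness of the non-cuspidal part by the Jacquet computations (Proposition \ref{prop_nc_finite}), and then needs the period ping-pong to show the cuspidal part is irreducible or zero (resp.\ to bound the cuspidal quotients of $\Theta(\sigma)$). In the non-tempered case finite length is not obtained from "Jacquet finiteness + admissibility" either, but from exhibiting $\Theta(\pi)$ as a quotient of an explicit standard module $I(\chi^+,s)$. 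So the skeleton of your argument is right for the non-tempered half, but the cuspidal/tempered half needs the Shalika and $\psi_E$-period machinery, which is the actual content of the paper's proof.
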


The proof will take up the rest of this section; we provide an outline:

\begin{enumerate}[1)]
\item First, we consider the non-tempered correspondence in \S\ref{subs_nontemp}. In particular, we prove (i) and (ii) for non-tempered $\pi$; (iii) is then a consequence of the proof. These results will follow from our computations of Jacquet modules in \S \ref{sec_Jacquet}.
\end{enumerate}
Next, we study the lifts of tempered representations. If $\pi$ is tempered, we decompose $\Theta(\pi)$ into its cuspidal and non-cuspidal part: $\Theta(\pi) = \Theta(\pi)_{\text{c}} \oplus  \Theta(\pi)_{\text{nc}}$. We have the analogous decomposition $\Theta(\sigma) = \Theta(\sigma)_{\text{c}} \oplus  \Theta(\sigma)_{\text{nc}}$ for tempered $\sigma \in \Irr(G')$.
\begin{enumerate}[1), resume]
\item The finiteness of $ \Theta(\pi)_{\text{nc}}$ and $ \Theta(\sigma)_{\text{nc}}$ is proved in Proposition \ref{prop_nc_finite} using the Jacquet module computations from \S \ref{sec_Jacquet}.
\end{enumerate}
To analyze the cuspidal part we employ the strategy from \cite{gan2021howe}. The main idea is the “period ping-pong” introduced there\,---\,see Lemma \ref{lem_ppp1} and \ref{lem_pp2}.
\begin{enumerate}[1), resume]
\item We show that $\Theta(\pi)_{\text{c}}$ is either irreducible or zero in Proposition \ref{prop_no2tempered_gen} (for generic $\pi$) and Proposition \ref{prop_no2tempered_nongen} (for non-generic $\pi$). The uniqueness of the irreducible quotient in (i) is then deduced easily as a consequence of the period ping-pong; see Proposition \ref{prop_unique_quotient}. This proves (i).
\item Part (ii) is also shown to be a consequence of the period ping pong; see Prop.~\ref{prop_injectivity_tempered}.
\item Finally, the finiteness of $\Theta(\sigma)_{\text{c}}$ in (iv) follows from Propositions \ref{prop_no2_gen_quotients} and \ref{prop_nongeneric_quotient_implies_unique}.
\end{enumerate}

\subsection{Non-tempered correspondence}
\label{subs_nontemp}

Using the results of the Section \ref{sec_Jacquet}, we now compute the lifts of non-tempered representations. We begin by recalling the Langlands classification for $G=G_2$. Any non-tempered $\pi \in \Irr(G)$ is isomorphic to exactly one of the following representations:
\begin{enumerate}[a)]
\item Unique irreducible quotient of $i_{Q_1}^G(\tau)$ for $\tau = |\det|^s\tau_0$,  where $\tau_0$ is a tempered irreducible representation and $s>0$.
\item Unique irreducible quotient of $i_{Q_2}^G(\tau)$ for $\tau = |\det|^s\tau_0$,  where $\tau_0$ is a tempered irreducible representation and $s>0$.
\item Unique irreducible quotient of $i_{Q_1}^G(\tau)$, where $\tau$ is the unique (Langlands) quotient of $\chi_1 \times \chi_2$; here $|\chi_1| = |\cdot|^{s_1}$ and $|\chi_2| = |\cdot|^{s_2}$ with $s_1 > s_2 > 0$.
\end{enumerate}
In case of $\PU_3(K)$, the situation is even simpler. Any character of $K^\times$ can be written as a product $\chi\cdot |N_{K/F}|^s$ where $\chi$ is unitary and $s \in \mathbb{R}$. We let $I(\chi, s)$ denote the principal series representation of $\PU_3(K)$ obtained by inducing this character of $K^{\times}$. If $s>0$ this is a standard module and has a unique irreducible quotient. Before doing computations, we need to address the question of distinguishing extensions to $G'=\mathrm{PU}_3(K)\rtimes \mathrm{Gal}(K/F)$ of $\mathrm{Gal}(K/F)$-invariant representations of $\mathrm{PU}_3(K)$. Fortunately, for principal series the extension can be done at the level of inducing data. Given a $\mathrm{Gal}(K/F)$-invariant character $\chi$ of $K^{\times}$, only one extension to $K^{\times} \rtimes \mathrm{Gal}(K/F)$, denoted by $\chi^+$, appears in the quadratic base change (cf.\ref{subs_oddsnends}). Let $\chi^-$ denote the other extension. Thus, for Galois-invariant $\chi$, $I(\chi,s)$ extends to $G'$ in two ways: $I(\chi^+,s)$ and $I(\chi^-,s)$. When $\chi$ is not invariant, only one extension exists; in the following proposition, we denote it by $I(\chi^+,s)$ to enable uniform statements.

\begin{prop}\begin{enumerate}[(i)]
\label{prop_nontemp_lifts_1}
\item Let $\pi \hookrightarrow i_{Q_1}^G(\tau^\vee)$ with $\tau$ as in (a) above. 
If $\tau$ comes from a character $\chi\cdot |N_{K/F}|^s$ of $K^\times$ via base change $K^\times \to \GL_2(F)$ (with $s>0$ and $\chi$ unitary), then $\Theta(\pi)$ is a non-zero quotient of $I(\chi^+, s)$; in particular, it has finite length. If $\tau$ does not come from a character of $K^\times$ via base change, then $\pi$ does not appear in the theta correspondence.

Conversely, let $\sigma^+$ denote the unique irreducible quotient of $I(\chi^+, s)$. Then $\Theta(\sigma^+)\neq 0$.

\item Let $\pi \hookrightarrow i_{Q_2}^G(\tau^\vee)$ with $\tau$ as in (b) or (c) above. Then $\pi$ does not appear in the theta correspondence.
\end{enumerate}
\end{prop}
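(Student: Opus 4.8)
The plan is to compute $\Theta$ of these non-tempered representations directly from the Jacquet module filtrations established in Section~\ref{sec_Jacquet}, using Frobenius reciprocity in both directions. The basic principle is that $\Theta(\pi)$, being a quotient of $\Pi$ restricted to $G'$ after extracting the $\pi$-isotypic part, is constrained by the structure of $r_{U_i}(\Pi)$: if $\pi$ is an irreducible quotient of $i_{Q_i}^G(\tau)$, then a nonzero map $\Pi \to \pi \otimes \Theta(\pi)$ produces (via the second Frobenius reciprocity, $\Hom_G(i_{Q_i}^G(\tau), \pi) \cong \Hom_{L_i}(\tau, r_{\overline{Q_i}}(\pi))$ combined with exactness of Jacquet functors) a nonzero $L_i \times G'$-equivariant map from the appropriate graded piece of $r_{U_i}(\Pi)$ onto $\tau^\vee \otimes \Theta(\pi)$ or similar; tracking which piece (T1/M1/B1 or T2/B2) carries the relevant $L_i = \GL_2$-type then pins down $\Theta(\pi)$.

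For part (ii), the strategy is a non-vanishing obstruction. The representation $\pi$ of type (b) or (c) embeds in $i_{Q_2}^G(\tau^\vee)$. If $\Theta(\pi) \neq 0$, applying $r_{U_2}$ and Frobenius reciprocity forces $\tau^\vee$ (or a suitable twist) to appear in $r_{\overline{Q_2}}(\Pi)$ as an $L_2$-subquotient with the correct central character and $G'$-companion. But Proposition~\ref{prop_Heisenberg} tells us exactly what the $L_2 = \GL_2$-subquotients of $r_{U_2}(\Pi)$ are: the piece (T2) contributes $\Pi_{C_3}$ (on which the center acts trivially, i.e.\ unitary central character) and $\omega_{K/F}|\det|^{1/2}$, while (B2) contributes $i_{\overline{B}}^{L_2}(\delta_2 \otimes \ldots) \otimes i_{B'}^{G'}(C_c(F^\times))$-type data, whose $L_2$-constituents are (twists of) principal series $\chi_1 \times \chi_2$ with at most one ramified factor $\omega_{K/F}$. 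For type (b) with $\tau = |\det|^s\tau_0$ and $\tau_0$ tempered with $s > 0$, the exponents of $\tau^\vee$ are incompatible with any of these (the central character or the precise exponent structure fails to match); for type (c) the Langlands quotient of $\chi_1 \times \chi_2$ with $s_1 > s_2 > 0$ similarly cannot appear because the relevant pieces of $r_{U_2}(\Pi)$ only produce principal series with one of the two inducing characters equal to the fixed $\omega_{K/F}|\det|^{1/2}$, contradicting $s_1 > s_2 > 0$ generic. This reduces (ii) to a finite bookkeeping of exponents against Proposition~\ref{prop_Heisenberg}.

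For part (i), the forward direction proceeds as follows. Writing $\pi \hookrightarrow i_{Q_1}^G(\tau^\vee)$, so $\pi^\vee \twoheadleftarrow \ldots$ dually $\pi$ is a quotient of a standard module; the surjection $\Pi \twoheadrightarrow \pi \otimes \Theta(\pi)$ together with $r_{U_1}$ and Frobenius reciprocity identifies a nonzero $G'$-quotient of $\Theta(\pi)$ inside the $G'$-part of the graded piece of $r_{U_1}(\Pi)$ matched to $\tau^\vee$. When $\tau$ arises by base change from $\chi \cdot |N_{K/F}|^s$, the relevant piece is the bottom piece (B1), $i_{L_1 \times B'}^{L_1 \times G'}(\tilde\omega)$, whose Weil representation $\tilde\omega$ realizes exactly the base-change correspondence $\chi \leftrightarrow \tau$ with $G'$-partner $i_{B'}^{G'}(\chi^+)$ (as analyzed in \S\ref{subs_oddsnends}); hence $\Theta(\pi)$ is a nonzero quotient of $I(\chi^+, s) = i_{B'}^{G'}(\chi^+ \cdot |N_{K/F}|^s)$, which has finite length with unique irreducible quotient. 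If $\tau$ is \emph{not} a base change, then $\tau^\vee$ (with its positive exponent $s>0$) cannot appear in any graded piece: the top piece (T1) has only $|\det|\mathbb{1}_{A_2}$ and $\omega_{K/F}|\det|\cdot\Pi_{A_1}$ type data (fixed exponent, wrong for a general tempered $\tau_0$ twisted by $|\det|^s$), the middle (M1) produces principal-series $\GL_2$-constituents (so $\tau^\vee$ would have to be a principal series, contradicting it being a genuine base-change-free, e.g.\ supercuspidal or discrete series, representation), and the bottom (B1) produces exactly the base-change constituents — so non-appearance follows. For the converse, $\Theta(\sigma^+) \neq 0$: one shows the explicit surjection $\Pi \twoheadrightarrow i_{Q_1}^G(\chi_1 \times \chi_2) \otimes i_{B'}^{G'}(\chi^+)$ from the bottom piece (B1) — this is essentially display~\eqref{eq_Pi_quotients} of \S\ref{subs_oddsnends} — which, after passing to the irreducible quotient $\sigma^+$ on the $G'$-side, exhibits a nonzero map $\Pi \to (\text{something}) \otimes \sigma^+$, proving $\Theta(\sigma^+) \neq 0$.

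The main obstacle I anticipate is the careful verification in part (i) that when $\tau$ is not a base change, $\tau^\vee$ genuinely fails to appear in \emph{any} of the three graded pieces of $r_{U_1}(\Pi)$ — in particular ruling out the middle piece (M1). Here one must use that the $L_1 = \GL_2$-constituents of $i_{\overline{B}\times B'}^{L_1\times G'}(\delta_1 \otimes C_c(F^\times))$ are all (subquotients of) principal series of $\GL_2$, so if $\tau_0$ is a supercuspidal or Steinberg-type representation then $|\det|^s\tau_0$ cannot be such a subquotient; and for the top piece one needs that the tempered $\Pi_{A_1}$ appearing there has a fixed exponent structure incompatible with the strictly positive $s$. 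This requires knowing the exponents of $\Pi_{N_1}$ precisely (Proposition 8.4 of \cite{gan2005minimal}, as cited) and a clean statement of when a twisted tempered representation of $\GL_2(F)$ can appear as a subquotient of a principal series, which is standard but must be invoked correctly. The rest is essentially an organized application of Frobenius reciprocity against the explicit filtrations of Propositions~\ref{prop_3step} and~\ref{prop_Heisenberg}.
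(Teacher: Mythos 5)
Your overall strategy coincides with the paper's: from $\pi \hookrightarrow i_{Q_1}^G(\tau^\vee)$ one bounds $\Theta(\pi)^* = \Hom_G(\Pi,\pi)$ by $\Hom_{L_1}(r_{U_1}(\Pi),\tau^\vee)$ via Frobenius reciprocity and then reads off the answer from the filtration of Proposition \ref{prop_3step}, with \eqref{eq_3step_bottom} realizing the base change $K^\times \leftrightarrow \GL_2(F)$, and part (ii) is an exponent comparison against Proposition \ref{prop_Heisenberg}. However, there is a genuine gap in the step you yourself single out as the main obstacle: your criterion for discarding the middle piece \eqref{eq_3step_middle} --- ``its $\GL_2$-constituents are principal series, so a supercuspidal or Steinberg-type $\tau_0$ cannot occur there'' --- says nothing when $\tau_0$ is itself an irreducible tempered principal series of $\GL_2(F)$. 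This case bites twice. First, if $\tau_0 = \mu_1 \times \mu_2$ with $\mu_1\mu_2^{-1} \neq \omega_{K/F}$, then $\tau$ is \emph{not} a base change, yet $\tau^\vee$ is a principal series, so your argument produces no contradiction and the asserted non-appearance of $\pi$ is left unproved. Second, and more seriously for the main positive assertion, when $\chi$ is Galois-invariant the base change of $\chi\cdot|N_{K/F}|^s$ is the principal series $\chi_1|\cdot|^s \times \chi_2|\cdot|^s$; to conclude that $\Theta(\pi)$ is a quotient of $I(\chi^+,s)$ (and of nothing larger) you must show that \eqref{eq_3step_top} and \eqref{eq_3step_middle} contribute nothing to $\Hom_{L_1}(r_{U_1}(\Pi),\tau^\vee)$, and your outline offers no argument at all for \eqref{eq_3step_middle} in this situation, since here $\tau^\vee$ \emph{is} a principal series.

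The discriminating datum is not the Bernstein support of $\tau_0$ but an exponent comparison in the coordinate carrying $\omega_{K/F}$, and this is what the paper uses. By Bernstein's form of Frobenius reciprocity, $\Hom$ and $\Ext$ from \eqref{eq_3step_middle} to $\tau^\vee$ are computed from $\Hom/\Ext_{\GL_1\times\GL_1}(\delta_1\otimes C_c(F^\times), r_B(\tau^\vee))$; the second $\GL_1$-factor acts on $\delta_1\otimes C_c(F^\times)$ by the unitary character $\omega_{K/F}$ (since $\delta_1 = 1\otimes\omega_{K/F}$ and $C_c(F^\times)$ is a translation module for the first factor only), whereas every character occurring in $r_B(\tau^\vee)$ has second coordinate of absolute value $|\cdot|^{-s}$ or $|\cdot|^{-s-1/2}$ with $s>0$, hence non-unitary. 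So $\Hom$ and all $\Ext$ vanish for \emph{every} $\tau$ in case (a), regardless of whether $\tau_0$ is supercuspidal, Steinberg, or a principal series, and regardless of base-change type; this, together with the central-character vanishing for \eqref{eq_3step_top}, gives $\Hom_{L_1}(r_{U_1}(\Pi),\tau^\vee) \cong \Hom_{L_1}\bigl(i_{L_1\times B'}^{L_1\times G'}(\tilde\omega),\tau^\vee\bigr)$ exactly, after which the similitude correspondence identifies this space with $I(\chi^+,s)^*$ (or $0$ in the non-base-change case). Note also that the $\Ext$-vanishing, not just the $\Hom$-vanishing, is needed to get this isomorphism on the nose, which is how one sees that $I(\chi^+,s)$ is in fact a quotient of $\Pi$ and hence that the lifts in the statement are non-zero; your appeal to \eqref{eq_Pi_quotients}, which in \S\ref{subs_oddsnends} is formulated for unitary $\chi$ at $s=0$, would require the same vanishing to extend to $s>0$.
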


\begin{proof} We use the fact that $\Theta(\Pi)^* = \Hom_{G}(\Pi,\pi)$ (non-smooth linear dual). Thus, from $\pi \hookrightarrow i_{Q_i}^G(\tau^\vee)$ we get $\Theta(\Pi)^* = \Hom_{G}(\Pi,\pi) \subseteq \Hom_{G}(\Pi,i_{Q_i}^G(\tau^\vee)) = \Hom_{L_i}(r_{U_{i}}(\Pi), \tau^\vee)$ using Frobenius reciprocity. We now analyze the space $\Hom_{L_i}(r_{U_{i}}(\Pi), \tau^\vee)$ using Propositions \ref{prop_3step} and \ref{prop_Heisenberg}.
\begin{enumerate}[(i)]
\item Let $S_1, S_2$ and $S_3$ denote the subquotients appearing in \eqref{eq_3step_top}, \eqref{eq_3step_middle} and \eqref{eq_3step_bottom}, respectively. Comparing the central characters, one sees that $\Ext_{L_1}(S_1, \tau^\vee) = 0$ (recall that $s > 0$, so the central character is a negative power of $|\cdot|$). We thus get the following exact sequence:
\[
0 \to \Hom_{L_1}(S_2, \tau^\vee) \to \Hom_{L_1}(r_{U_1}(\Pi), \tau^\vee) \to \Hom_{L_1}(S_3, \tau^\vee)	\to \Ext_{L_1}(S_2, \tau^\vee).
\]
Recall that $S_2 = i_{\overline{B} \times B'}^{L_1 \times G'}(\delta_1 \otimes C_c(F^\times))$. Using the Bernstein form of Frobenius reciprocity we see that
\[
\Ext_{L_1}(S_2, \tau^\vee) = \Ext_{\GL_1 \times \GL_1}(\delta_1 \otimes C_c(F^\times), r_{B}(\tau^\vee)).
\]
Recall that the second $\GL_1$ factors acts on $\delta_1 \otimes C_c(F^\times)$ by $\omega_{K/F}$; by our assumption on $\tau$, this is different from the corresponding action on $r_{B}(\tau^\vee)$. Therefore $\Ext_{L_2}^i(S_2, \tau^\vee) = 0$ for all $i$, and the above long exact sequence becomes
\[
\Hom_{L_1}(r_{U_1}(\Pi), \tau^\vee) \cong \Hom_{L_1}(S_3, \tau^\vee).
\]
Now \eqref{eq_3step_bottom} shows that $S_3 = i_{L_1\times B'}^{L_1\times G'}(\tilde{\omega})$. By Lemma 9.4 of \cite{gan2011non}, the maximal $\tau^\vee$-isotypic quotient of $i_{B'}^{G'}(\tilde{\omega})$ is $\tau^\vee \otimes i_{B'}^{G'}(\Theta(\tau^\vee))$, where $\Theta(\tau^\vee)$ is the big theta lift of $\tau^\vee$ with respect to the similitude correspondence described in \S \ref{subs_oddsnends}. Hence, if $\Theta(\pi)$ is non-zero, then $\tau^\vee$ must come from a character of $K^\times$ via base change. Note that the character corresponding to $\chi_1 \times \chi_2$ (or $\rho_\chi$) is in fact $\chi^{-1}$ (and not $\chi$); this is accounted for by the fact that $w\in K^\times$ acts on $K$ by $1/w$ (see \S \ref{subs_Jacquet1}).
Thus, if $\tau = \chi_1|\cdot|^s \times \chi_2|\cdot|^s$ or $\tau = \rho_{\chi|\cdot|^s}$ for a unitary character $\chi$, we get $\Theta(\tau) = \chi^+|\cdot|^s$. 

This proves $\Theta(\pi)^* \subseteq \Hom_{L_1}(S_3,\tau^\vee) =I(\chi^+,s)^*$. Taking the smooth vectors (and the contragredient), we see that $\Theta(\pi)$ is a quotient of $I(\chi^+,s)$, as claimed. Furthermore, notice that the above proof shows that $I(\chi^+,s)$ is a quotient of $\Pi$, so $\Theta(\sigma)\neq 0$. 
\item This is proved by comparing the central character, the same way we did in (i). We omit the details.
\end{enumerate}
\end{proof}
Not surprisingly, we get analogous results for lifts from $G' \cong \PU_3(K)\rtimes \mathbb{Z}/2\mathbb{Z}$. The following proposition is proved just like Proposition \ref{prop_nontemp_lifts_1}, by analyzing $r_{U'}(\Pi)$:
\begin{prop}
\label{prop_nontemp_lifts_2}
As before, let $\sigma^+$ (resp.~$\sigma^-$) denote the unique irreducible quotient of $I(\chi^+, s)$ (resp.~$I(\chi^+, s)$), where $\chi$ is a unitary character of $K^\times$ and $s > 0$. Then $\Theta(\sigma^-)=0$, and $\Theta(\sigma^+)$ is a non-zero quotient of $i_{Q_1}^{G}(\tau)$, where $\tau$ is the representation of $\GL_2$ obtained from $\chi|\cdot|^s$ by base change $K^\times \to \GL_2(F)$. In particular, $\Theta(\pi)$ has finite length. Furthermore, $\Theta(\pi)\neq 0$, where $\pi$ is the unique irreducible quotient of $i_{Q_1}^G(\tau)$.
\end{prop}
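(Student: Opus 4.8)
The plan is to reproduce the argument in the proof of Proposition \ref{prop_nontemp_lifts_1}, feeding in the Jacquet module $r_{U'}(\Pi)$ of Proposition \ref{prop_D4} in place of $r_{U_1}(\Pi)$. First I would realize $\sigma^{\pm}$ as the unique irreducible submodule of $i_{B'}^{G'}(\mu^{\pm})$, where $\mu^{\pm}$ is the Weyl translate of the inducing character of the standard module $I(\chi^{\pm},s)$ (recall $s>0$): explicitly, $\mu^{\pm}$ is the $\chi^{\pm}$-type extension to $T' = K^\times\rtimes\mathbb{Z}/2\mathbb{Z}$ of the character $(\chi^{-1})^{\sigma}\,|N_{K/F}|^{-s}$ of $K^\times$; the point to retain is that its $|\cdot|_K$-exponent is $-s<0$. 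Using $\Theta(\sigma^{\pm})^* = \Hom_{G'}(\Pi,\sigma^{\pm})$ and ordinary Frobenius reciprocity, this gives an inclusion
\[
\Theta(\sigma^{\pm})^* \hookrightarrow \Hom_{G'}\!\big(\Pi, i_{B'}^{G'}(\mu^{\pm})\big) = \Hom_{T'}\!\big(r_{U'}(\Pi),\mu^{\pm}\big).
\]

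Now I would invoke Proposition \ref{prop_D4}: $r_{U'}(\Pi)$ fits in a short exact sequence $0 \to (\mathrm{B3}) \to r_{U'}(\Pi) \to (\mathrm{T3}) \to 0$ of $G\times T'$-modules, with top piece $(\mathrm{T3}) = \Pi_{B_3}\cdot|N_{K/F}(z)|$ (see \eqref{eq_D4_top}) and bottom piece $(\mathrm{B3}) = i_{L_1\times T'}^{G\times T'}(\tilde\omega)$ (see \eqref{eq_D4_bottom}); there is no middle term, because $\Omega_1'\cap V_1' = \emptyset$. The top piece contributes nothing to $\Hom_{T'}(-,\mu^{\pm})$, nor to any $\Ext^i_{T'}(-,\mu^{\pm})$: indeed $K^\times\subset T'$ acts trivially on $\Pi_{B_3}$, hence on $(\mathrm{T3})$ through the character $z\mapsto|N_{K/F}(z)|$, of $|\cdot|_K$-exponent $+1$, whereas $\mu^{\pm}|_{K^\times}$ has exponent $-s<0\neq 1$, leaving no common $K^\times$-subquotient --- this is precisely where working with the exponent $-s$ rather than $+s$ matters, and it makes the argument uniform in $\chi$ and $s$. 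Hence $\Hom_{T'}(r_{U'}(\Pi),\mu^{\pm}) \cong \Hom_{T'}((\mathrm{B3}),\mu^{\pm})$.

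It remains to evaluate $\Hom_{T'}(i_{L_1\times T'}^{G\times T'}(\tilde\omega),\mu^{\pm})$, and here I would quote the same ingredient used for Proposition \ref{prop_nontemp_lifts_1}: Roberts' analysis of the base-change Weil representation $\tilde\omega$ --- equivalently Lemma 9.4 of \cite{gan2011non}, now with the roles of $\GL_2(F)$ and $T'=\GO(K)$ swapped --- asserts that the maximal $\mu^{\pm}$-isotypic quotient of $i_{L_1\times T'}^{G\times T'}(\tilde\omega)$, as a $T'$-module, equals $\mu^{\pm}\otimes i_{Q_1}^{G}(\Theta_{\mathrm{BC}}(\mu^{\pm}))$, where $\Theta_{\mathrm{BC}}(\mu^{\pm})$ is the big theta lift of $\mu^{\pm}$ to $\GL_2(F)$ in this similitude correspondence. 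By the dichotomy recalled in \S\ref{subs_oddsnends}, $\Theta_{\mathrm{BC}}(\mu^{-}) = 0$, while $\Theta_{\mathrm{BC}}(\mu^{+}) = \tau$, the base change of $\chi|\cdot|^s$ to $\GL_2(F)$ --- the passage from $(\chi^{-1})^{\sigma}|N_{K/F}|^{-s}$ to $\chi|\cdot|^s$ (``invert, then base change'') being the $1/w$-bookkeeping flagged in the proof of Proposition \ref{prop_nontemp_lifts_1}. Thus $\Theta(\sigma^{-})^* = 0$, i.e.\ $\Theta(\sigma^{-}) = 0$; and $\Theta(\sigma^{+})^* \hookrightarrow i_{Q_1}^{G}(\tau)^*$, so, taking smooth vectors and contragredients (and using $i_{Q_1}^G(\tau)^\vee = i_{Q_1}^G(\tau^\vee)$), $\Theta(\sigma^{+})$ is a quotient of $i_{Q_1}^{G}(\tau)$; in particular it has finite length.

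Finally, for the non-vanishing assertions I would invoke Proposition \ref{prop_nontemp_lifts_1}(i) together with a see-saw argument. By that proposition, $\Theta(\pi)$ is a non-zero quotient of $I(\chi^{+},s)$ --- so $\Theta(\pi)\neq 0$ (the last assertion of the proposition, and it has finite length since $I(\chi^{+},s)$ does), and moreover $\Theta(\pi)\twoheadrightarrow\sigma^{+}$. Combined with $\Pi\twoheadrightarrow\pi\boxtimes\Theta(\pi)$ this yields $\Pi\twoheadrightarrow\pi\boxtimes\sigma^{+}$, so the maximal $\sigma^{+}$-isotypic quotient $\Theta(\sigma^{+})\boxtimes\sigma^{+}$ surjects onto $\pi\boxtimes\sigma^{+}$ and hence $\Theta(\sigma^{+})\twoheadrightarrow\pi$ is non-zero. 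The main difficulty is not conceptual but bookkeeping: matching the extensions $\chi^{+},\chi^{-}$ to the participating and non-participating characters of $T'$ under Roberts' correspondence, and tracking every normalization --- the modular characters $\delta_{B'}$ and $\delta_{Q_1}$, the $1/w$-action of $K^\times$ on the polarizing subspace, and the smooth-vector/contragredient passages --- so that the lift comes out as $i_{Q_1}^{G}(\tau)$ with $\tau$ genuinely the base change of $\chi|\cdot|^s$. Everything else is the apparatus already in place for Proposition \ref{prop_nontemp_lifts_1}, now driven by Proposition \ref{prop_D4}.
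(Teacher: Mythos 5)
Your proposal is correct and is essentially the paper's own argument: the paper proves this proposition exactly by rerunning the proof of Proposition \ref{prop_nontemp_lifts_1} with $r_{U'}(\Pi)$ from Proposition \ref{prop_D4} in place of $r_{U_1}(\Pi)$ — realizing $\sigma^{\pm}$ inside the Weyl-translated principal series, killing the top piece \eqref{eq_D4_top} by comparing exponents, and reading off the bottom piece \eqref{eq_D4_bottom} via the similitude base-change correspondence, with the $\chi^{-}$ extension not participating. Your non-vanishing step (Proposition \ref{prop_nontemp_lifts_1}(i) plus the maximal-isotypic-quotient argument) and your flagging of the extension-label bookkeeping under the Weyl translation are consistent with, and if anything slightly more explicit than, what the paper does.
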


\begin{rem}
\label{rem_temp2temp}
Notice that Propositions \ref{prop_nontemp_lifts_1} and \ref{prop_nontemp_lifts_2} combine to give us the following: Assume that $\pi$ and $\tau$ are irreducible representations of $G$ and $G'$, respectively, such that $\pi \boxtimes \tau$ is a quotient of $\Pi$. Then
\[
\pi \text{ is tempered} \iff \tau \text{ is tempered}.
\]
\end{rem}
\subsection{Finiteness of theta lifts}

Our first task is to prove that the big theta lift $\Theta(\pi)$ has finite length. To do this, we recall that $\Theta(\pi)$ can be decomposed as
\[
\Theta(\pi) = \Theta(\pi)_{\text{nc}} \oplus  \Theta(\pi)_{\text{c}},
\]
the sum of its non-cuspidal and cuspidal part. We first prove the following

\begin{prop}
\label{prop_nc_finite}
\begin{enumerate}[(i)]
\item Let $\tau \in \Irr(G')$ be tempered. Then $\Theta(\tau)_{\text{nc}}$ has finite length.
\item Let $\pi \in \Irr(G)$ be tempered. Then $\Theta(\pi)_{\text{nc}}$ has finite length.
\end{enumerate}
\end{prop}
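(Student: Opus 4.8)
The plan is to prove both statements by the same mechanism: a non-cuspidal constituent of $\Theta(\pi)$ (resp.\ $\Theta(\tau)$) must be a subrepresentation of a representation parabolically induced from a proper parabolic of $G'$ (resp.\ $G$), and Frobenius reciprocity converts this into a statement about the Jacquet modules of $\Pi$ computed in Section~\ref{sec_Jacquet}. Concretely, for part~(i) let $\sigma$ be an irreducible non-cuspidal subquotient of $\Theta(\tau)$; since $G'$ is quasi-split of rank one, $\sigma$ embeds into $i_{B'}^{G'}(\mu)$ for some character $\mu$ of $T' = K^\times \rtimes \mathrm{Gal}(K/F)$. Because $\tau \boxtimes \sigma$ is (a subquotient related to) a quotient of $\Pi$, we obtain a nonzero map out of $\Pi$, and applying the Jacquet functor $r_{U'}$ together with the second form of Frobenius reciprocity we land in $\Hom$ of $r_{U'}(\Pi)$, whose structure is given by Proposition~\ref{prop_D4}: a two-step filtration with top $\delta_{B'}^{-1/2}\Pi_{N'}$ (essentially the minimal representation $\Pi_{B_3}$ of $D_4^E$, twisted) and bottom $i_{L_1\times T'}^{G\times T'}(\tilde\omega)$.

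The key step is then to bound, for fixed tempered $\tau$, the set of characters $\mu$ of $T'$ that can occur. From the bottom piece $i_{L_1\times T'}^{G\times T'}(\tilde\omega)$ this is controlled by the base-change correspondence $K^\times \leftrightarrow \GL_2(F)$ analyzed in \S\ref{subs_oddsnends}: a given tempered $\tau$ forces the $\GL_2$-data, hence pins down $\mu$ up to the finite ambiguity already described in Lemma~\ref{lem_base_change} and Remark~\ref{rem_middle_quotients}. From the top piece one uses that $\Pi_{B_3}$ is the minimal representation of $D_4^E$ and that $K^\times$ acts on it trivially, so only finitely many exponents on the $F^\times$-part (the $|N_{K/F}(z)|$-twist) appear; more precisely, the restriction to $G\times T'$ decomposes through the dual pair $G_2\times S_E$ inside $D_4^E$ and contributes only the finitely many constituents coming from the cubic unipotent A-packet of Proposition~6.2 of \cite{GGJ}. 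Combining the two contributions, only finitely many $\mu$ (up to conjugacy) are possible, and for each such $\mu$ the induced representation $i_{B'}^{G'}(\mu)$ has finite length; since $\Theta(\tau)_{\mathrm{nc}}$ is admissible (all big theta lifts are admissible) and all its constituents lie among the constituents of these finitely many $i_{B'}^{G'}(\mu)$, it has finite length. Part~(ii) is entirely parallel: a non-cuspidal constituent of $\Theta(\pi)$ embeds in $i_{Q_1}^G(\rho)$ or $i_{Q_2}^G(\rho)$ for a representation $\rho$ of $\GL_2(F)$, and Frobenius reciprocity reduces us to $r_{U_1}(\Pi)$ and $r_{U_2}(\Pi)$, whose filtrations are given by Propositions~\ref{prop_3step} and~\ref{prop_Heisenberg}; again each graded piece allows, for fixed tempered $\pi$, only finitely many $\rho$ (the tops $\Pi_{N_1}$, $\Pi_{N_2}$ are honest minimal/principal-series representations with finitely many exponents, and the induced pieces are governed by the same finite base-change data), so $\Theta(\pi)_{\mathrm{nc}}$ has finitely many constituents and, being admissible, finite length.

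I expect the main obstacle to be the bookkeeping in the top graded piece, i.e.\ controlling the $G\times T'$-module structure of $\delta_{B'}^{-1/2}\Pi_{N'}$ (and similarly $\delta_{Q_i}^{-1/2}\Pi_{N_i}$). One must ensure that, although $\Pi_{B_3}$ is infinite-dimensional, its restriction to the rank-one group $T'$ (or to the $\GL_2$ Levi) has only finitely many ``exponents'' relevant to a fixed tempered input; this is where one genuinely invokes that $\Pi_{B_3}$ is \emph{minimal} (so its wavefront set, equivalently the orbits appearing in its Jacquet modules, are severely constrained) rather than an arbitrary representation, and where the explicit description via \cite{GGJ}, \cite{HMS} does the work. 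Once that finiteness input is in hand, the rest is a routine $\Ext$/central-character argument of exactly the type already carried out in the proof of Proposition~\ref{prop_nontemp_lifts_1}.
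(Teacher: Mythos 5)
Your overall strategy is the paper's: reduce everything to the filtrations of $r_{U_1}(\Pi)$, $r_{U_2}(\Pi)$, $r_{U'}(\Pi)$ from Section \ref{sec_Jacquet} and feed in finiteness of the smaller correspondences living in the graded pieces. But two steps as written do not work. First, you extract your constraint on the inducing data from a surjection of $\Pi$ onto $\tau\boxtimes\sigma$; an irreducible \emph{subquotient} $\sigma$ of $\Theta(\tau)$ is in general not a quotient, so no such map exists, and the hedge ``(a subquotient related to) a quotient'' does not repair this. The correct mechanism (and the paper's) is to apply the exact Jacquet functor to the defining surjection $\Pi\twoheadrightarrow\tau\boxtimes\Theta(\tau)$: then $r_{Q_i}(\Theta(\tau))$ is a quotient of the $\tau$-isotypic quotient of $r_{U_i}(\Pi)$, so it suffices to bound the length of the $\tau$-isotypic quotient of each graded piece, and non-cuspidal constituents are then detected because their Jacquet modules are nonzero. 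Your ``finitely many inducing characters plus admissibility'' endgame is a valid way to finish once the constraint is obtained at the level of Jacquet modules, but not via nonexistent quotient maps out of $\Pi$. (Note also that your (i) and (ii) are swapped: for $\tau\in\Irr(G')$ the lift $\Theta(\tau)$ is a representation of $G$, so its non-cuspidal part is controlled by $r_{U_1}(\Pi)$ and $r_{U_2}(\Pi)$, not by $r_{U'}(\Pi)$, and conversely for $\pi\in\Irr(G)$.)

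The more serious omission is the top graded piece \eqref{eq_Heis_top} of $r_{U_2}(\Pi)$, i.e.\ the summand $\Pi_{C_3}$. You dismiss the tops as ``honest minimal/principal-series representations with finitely many exponents,'' but $G'$ acts nontrivially on the infinite-dimensional minimal representation $\Pi_{C_3}$ of the Levi $M_2$, and what is actually required is that for a fixed tempered $\tau\in\Irr(G')$ the $\tau$-isotypic quotient of $\Pi_{C_3}$ has finite length as an $L_2\cong\GL_2(F)$-module. This is not a statement about exponents: it is the finiteness of the (similitude) theta correspondence of type $\U_2(K)\leftrightarrow\U_3(K)$ sitting inside $\Pi_{C_3}$, and the paper invokes classical Howe duality for unitary dual pairs at exactly this point. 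You do supply the analogous input for the piece \eqref{eq_D4_top} (the $G_2\times S_E$ correspondence in $D_4^E$, via \cite{HMS} and \cite{GGJ}, though stated a bit loosely), and the bottom and middle pieces are correctly referred to the base-change Weil correspondence for $\GL_2(F)\times(K^\times\rtimes\mathbb{Z}/2\mathbb{Z})$; but without the $\Pi_{C_3}$ ingredient the analysis of $r_{U_2}(\Pi)$, hence the finiteness of $\Theta(\tau)_{\mathrm{nc}}$ for tempered $\tau\in\Irr(G')$ (part (i) of the statement), is not established.
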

\begin{proof}
(i) Recall that we have two maximal parabolic subgroups in $G$, $Q_i = L_iU_i$ for $i=1,2$. It suffices to show that the Jacquet module $r_{U_i}(\Theta(\tau))$ is a finite-length representation of $L_i$, i.e.~that the $\tau$-isotypic quotient of $r_{U_i}(\Pi)$ has finite length. To do that, we use the Jacquet module filtrations computed in Section \ref{sec_Jacquet}.

Consider $r_{U_1}(\Pi)$ first. Again, we let $S_1, S_2$ and $S_3$ denote the subquotients appearing in \eqref{eq_3step_top}, \eqref{eq_3step_middle} and \eqref{eq_3step_bottom}, respectively. We need to show that the multiplicity space of the $\tau$-isotypic quotient of $S_i$ has finite length, for $i=1,2,3$.

At the bottom, we have
\[
\Theta_{S_3}(\tau)^* \defeq \Hom_{G'}(S_3,\tau) \cong \Hom_{G'}(i_{L_1\times B'}^{L_1\times G'}(\tilde{\omega}), \tau) \cong \Hom_{T'}(\tilde{\omega}, r_{\overline{B}'}(\tau)).
\]
Now  $r_{\overline{B}'}$ is a finite-length representation of $T' \cong K^\times$. Taking any irreducible subquotient $\chi$ (which is in fact a character of $K^\times$), we have $\Hom_{T'}(\tilde{\omega}, \chi) \cong \Theta_{\tilde{\omega}}(\chi)^*$, where $\Theta_{\tilde{\omega}}(\sigma)$ is the theta lift of $\chi$ with respect to the Weil representation $\tilde{\omega}$. The fact that $\Theta_{\tilde{\omega}}(\chi)$ has finite length follows from the Howe duality theorem for classical (similitude) correspondences, cf.~\S \ref{subs_oddsnends}. This in turn shows (after taking the smooth vectors) that $\Theta_{S_3}(\tau)$ itself is of finite length as an $L_1$-module.

In the middle, we have
\begin{align*}
\Theta_{S_2}(\tau)^* \defeq \Hom_{G'}(S_2,\tau) &\cong \Hom_{G'}(i_{\overline{B}\times B'}^{L_1\times G'}(\delta_1 \otimes C_c(F^\times)), \tau)\\
&\cong \Hom_{T'}(i_{\overline{B}}^{G}(\delta_1 \otimes C_c(F^\times)), r_{\overline{B}'}(\tau)).
\end{align*}
Again, $r_B(\tau)$ is a finite-length representation of $T$. Taking an irreducible subquotient $\chi$ of $r_B(\tau)$, we see that
\[
\Hom_{T'}(i_{\overline{B}}^{G}(\delta_1 \otimes C_c(F^\times)), \chi) = i_{\overline{B}}^{G}(\chi_1^{-1} \otimes \omega_{K/F}) \oplus i_{\overline{B}}^{G}(\chi_2^{-1} \otimes \omega_{K/F})
\]
(see Remark \ref{rem_middle_quotients}); in particular, we get a representation of finite length. Taking the smooth vectors, we see that $\Theta_{S_2}(\tau)$ has finite length.

Finally, we need to check $S_1$, the top part of the filtration. However, $B'$ acts trivially on $S_1$, so the $\tau$-isotypic quotient is zero. 

The Jacquet module with respect to $U_2$ is analyzed in the same way. Let $S_1$ and $S_2$ be the subquotients of $r_{U_2}(\Pi)$ appearing in \eqref{eq_Heis_top} and \eqref{eq_Heis_bottom}, respectively. To show that the $\tau$-isotypic quotient of $S_2$ has finite length we proceed just like in the $U_1$ case; we omit the details. 

As for $S_1$, recall that $P_2 = M_2N_2$ is the Heisenberg parabolic in $H$. The Levi factor $M_2$ (which corresponds to the $C_3$ part of the relative diagram) has been described in \cite[\S 7.2]{gan2021twisted}\textemdash\,one can think of it roughly as a unitary group $\U_6(K)$\textemdash\,and $\Pi_{C_3}$ is its minimal representation. We thus need to analyze the $\tau$-isotypic quotient of $\Pi_{C_3}$ as a representation of $L_2 \cong \GL_2(F)$. On the other hand, we have the central isogeny $\GL_2(F) \times K^\times \to \GU_2(K)$. Thus the correspondence that arises from $\Pi_{C_3}$ can roughly be viewed as (the similitude version) of the classical correspondence
\[
\U_2(K) \longleftrightarrow \U_3(K)
\]
for unitary groups. It follows that the theta lift of $\tau$ with respect to $\Pi_{C_3}$ is a finite length representation of $\GL_2(F)$.

Part (ii) can be proved in the same way, by analyzing $r_{U'}(\Pi)$ as a $T'$-module. We let $S_1$ and $S_3$ denote the subquotients appearing in \eqref{eq_D4_top} and  \eqref{eq_D4_bottom}, respectively. To prove that the $\pi$-isotypic quotient of $S_3$ has finite length we repeat the arguments from the $U_1$ case; we leave the details to the reader. 

In order to analyze $S_3$ we need to consider the $\pi$-isotypic quotient of the representation $\Pi_{B_3}$. Recall that $\Pi_{B_3}$ is the minimal representation of the Levi factor $M'$. This Levi factor is a quasi-split group $D_4^E$ where $E = K\oplus F$, and we are thus looking at the correspondence for the dual pair $G_2 \times \text{Aut}(E)$ inside the group $D_4^E$. The finiteness now follows from the results of \cite{HMS}, where this correspondence has been studied in detail.

\end{proof}

Having established the finiteness of $\Theta(\pi)_{\text{nc}}$, we turn to $\Theta(\pi)_{\text{c}}$. Here our approach is based on the “ping-pong” of periods utilized in \cite{gan2021howe}. We will need to consider generic and non-generic representation separately. We begin by recalling the relevant periods.

\subsection{Shalika periods}

First, we recall the parabolic subgroup $B'$ of $G'$ discussed in Remark \ref{rem_Shalika}. The unipotent radical $U'$ of $B'$ has a filtration $\{0\} \subset U'(2) \subset U'$ with $U'(2)\cong C_0$ and $U'/U'(2)\cong C$. We let $\psi_{U'}$ be the character of $U'$ which, via the identification $U'/U'(2)\cong C$ is given by $\psi\circ Tr_{C/F}$. Finally, let $S$ be the semi-direct product of $U'$ and the stabilizer of of $\psi_{U'}$ in the Levi of $B'$.  
We note that the stabilizer is isomorphic to $\mathrm{Aut}(C)$. We denote by $\psi_S$ the (Shalika) character of $S$ equal to $\psi_{U'}$ on $U'$ and trivial on 
$\mathrm{Aut}(C)$.

Let $V$ be the unipotent radical of the Borel subgroup $Q_1\cap Q_2$ in $G$, and let $\psi_V: V \rightarrow \mathbb C^{\times}$ be a Whittaker character for $G=G_2$. Just like in \cite[Lemma 4.5]{SavinWeissman}, one shows that
\begin{equation}
\label{eq_Whittaker_period}
\Pi_{V,\psi_V} \cong \cind_S^{G'} \psi_S
\end{equation}
holds for general $C$. Here $\Pi_{V,\psi_V}$ denotes the maximal quotient of $\Pi$ on which $V$ acts by $\psi_V$. This immediately implies

\begin{cor}
\label{cor_genericTheta}
Let $\pi$ be an irreducible representation of $G'$ Then $\Theta(\pi)$ is (non-zero) generic if and only if $\pi$ has a non-trivial Shalika period. 
\end{cor}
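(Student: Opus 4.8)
The plan is to deduce Corollary~\ref{cor_genericTheta} directly from the isomorphism \eqref{eq_Whittaker_period}, which identifies the twisted Jacquet module $\Pi_{V,\psi_V}$ with $\cind_S^{G'}\psi_S$. First I would recall the definition of $\Theta(\pi)$: for an irreducible representation $\pi$ of $G'$, the big theta lift $\Theta(\pi)$ is the representation of $G$ characterized by the property that $\Theta(\pi)\boxtimes\pi$ is the maximal $\pi$-isotypic quotient of $\Pi$ (with $G$ and $G'$ interchanged relative to the statement of Theorem~\ref{thm_Howe}, since here we lift from $G'$). Equivalently, $\Hom_{G'}(\Pi,\pi)$ carries a smooth $G$-action whose smooth part is $\Theta(\pi)^\vee$, and $\Theta(\pi)$ is nonzero generic precisely when $\Hom_G(\Theta(\pi),\cind_V^G\psi_V^{-1})\neq 0$, i.e. when the twisted Jacquet module $\Theta(\pi)_{V,\psi_V}\neq 0$.

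The key computation is then a two-step exchange of functors. On the one hand, taking $\psi_V$-twisted coinvariants of the maximal $\pi$-isotypic quotient of $\Pi$ gives $\Theta(\pi)_{V,\psi_V}\boxtimes\pi$ as the maximal $\pi$-isotypic quotient of $\Pi_{V,\psi_V}$, because the functor of $(V,\psi_V)$-coinvariants (applied on the $G$-side) commutes with taking the $\pi$-isotypic quotient (on the $G'$-side) — these act on commuting factors of the dual pair. On the other hand, by \eqref{eq_Whittaker_period} we have $\Pi_{V,\psi_V}\cong\cind_S^{G'}\psi_S$ as $G'$-modules. Hence $\Theta(\pi)$ is generic if and only if $\cind_S^{G'}\psi_S$ has $\pi$ as a quotient, i.e. $\Hom_{G'}(\cind_S^{G'}\psi_S,\pi)\neq 0$. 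By Frobenius reciprocity for compact induction (the adjunction $\Hom_{G'}(\cind_S^{G'}\psi_S,\pi)\cong\Hom_S(\psi_S,\pi)$, valid since $\pi$ is smooth and $\psi_S$ is a character of the closed subgroup $S$), this last space is exactly the space of $(S,\psi_S)$-equivariant functionals on $\pi$ — that is, the Shalika period of $\pi$. Therefore $\Theta(\pi)$ is nonzero generic $\iff$ $\pi$ has a nontrivial Shalika period, which is the claim.

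There are two points that require a little care. The first is the genericity criterion on the $G$-side: one must know that for an admissible (or at least finite-length) representation of $G=G_2$, having a nonzero $(V,\psi_V)$-twisted Jacquet module is equivalent to admitting an irreducible generic quotient (and that the twisted Jacquet module functor detects this). This is standard — it follows from exactness of twisted Jacquet functors together with the fact that $\Theta(\pi)$ has finite length once it is nonzero (which is part of Theorem~\ref{thm_Howe}(iv), or can be invoked from the finiteness results of \S\ref{sec_Howe}), so it has an irreducible quotient and genericity propagates. The second, and the main obstacle, is justifying that $(V,\psi_V)$-coinvariants commute with passage to the maximal $\pi$-isotypic quotient: this is where one uses that $\Pi$, being the ambient representation of the dual pair, decomposes compatibly, and that the Jacquet-type functor is right-exact and acts only through the $G$-variable while the $\pi$-isotypic quotient acts only through the $G'$-variable. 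Concretely, one writes $\Theta(\pi)\boxtimes\pi = \Pi/\Pi[\pi]$, where $\Pi[\pi]$ is the intersection of kernels of all maps $\Pi\to\pi'\boxtimes\pi$ over $\pi'\in\Irr(G)$ other than the lift, applies the exact functor $(-)_{V,\psi_V}$ on the $G$-side, and checks the result is still the maximal $\pi$-isotypic quotient of $\Pi_{V,\psi_V}$; this is formal once one notes that $(-)_{V,\psi_V}$ is a quotient functor on $\mathcal R(G)$ commuting with arbitrary colimits. With that in hand the corollary follows immediately.
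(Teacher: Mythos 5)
Your proposal follows the same route as the paper: the paper deduces the corollary directly from \eqref{eq_Whittaker_period}, via the chain $\Hom_V(\Theta(\pi),\psi_V)\cong\Hom_{V\times G'}(\Pi,\psi_V\otimes\pi)\cong\Hom_{G'}(\Pi_{V,\psi_V},\pi)\cong\Hom_{G'}(\cind_S^{G'}\psi_S,\pi)$, exactly the mechanism spelled out in step (2) of Lemma \ref{lem_ppp1}; your worry about commuting the $(V,\psi_V)$-coinvariants with the maximal $\pi$-isotypic quotient is resolved precisely by this Hom-space formulation, so that part is fine.

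One step, however, is misstated. The adjunction you invoke, $\Hom_{G'}(\cind_S^{G'}\psi_S,\pi)\cong\Hom_S(\psi_S,\pi|_S)$, is Frobenius reciprocity for compact induction from an \emph{open} subgroup; $S$ is closed but not open in $G'$, and for such subgroups $\cind$ is not left adjoint to restriction. Taken literally, the right-hand side is the space of vectors in $\pi$ on which $S$ acts by $\psi_S$, which is typically zero even when $\pi$ does carry a Shalika functional (Shalika and Whittaker functionals live on the dual, not inside $\pi$). The correct passage is the one implicit in Lemma \ref{lem_ppp1}: use $(\cind_S^{G'}\psi_S)^{\vee}\cong\Ind_S^{G'}(\overline{\psi}_S)$ (the modulus characters are trivial here) together with admissibility of the irreducible $\pi$, so that $\Hom_{G'}(\cind_S^{G'}\psi_S,\pi)\cong\Hom_{G'}(\pi^{\vee},\Ind_S^{G'}\overline{\psi}_S)\cong\Hom_S(\pi^{\vee},\overline{\psi}_S)$, i.e.\ a Shalika functional on $\pi^{\vee}$ with the conjugate character. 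Since the corollary only asserts the existence of a nontrivial Shalika period (and the paper itself is deliberately loose about $\pi$ versus $\pi^{\vee}$ and $\psi_S$ versus $\overline{\psi}_S$), this is a local repair rather than a change of strategy, but as written that step of your argument computes the wrong space.
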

For $C=K$ we have $\text{Aut}(C) = \mathbb{Z}/2\mathbb{Z}$, and the Shalika functional is simply the Whittaker functional extended trivially to $\mathbb{Z}/2\mathbb{Z}$. 

Conversely, recall that $Q_1$ is the three step maximal parabolic in $G$. Let $Q_1^{\mathrm{der}}$ be its derived group. In particular 
$Q_1^{\mathrm{der}}/U_1 \cong \SL_2(F)$, and $U_1/U_1(3)$ is a three-dimensional Heisenberg group.   Then 
\[
\Pi_{S,\psi_S} \cong \cind_{Q_1^{\mathrm{der}}}^{G} ( \rho_{\psi} \otimes \Theta(1)) 
\]
where $ \rho_{\psi}$ is the unique irreducible representation of $U_1/U_1(3)$, extended to $\widetilde \SL_2(F)$, and $\Theta(1)$ is the big theta lift of the trivial representation 
of $\mathrm{Aut}(C)$ to $\widetilde \SL_2(F)$, via the correspondence arising from $\widetilde \SL_2(F)\times \mathrm{Aut}(C)$ acting on the Weil representation on 
$C_c(C_0)$ given by $\psi$. 
If $C$ is the algebra of $2\times 2$ matrices, this is given by Proposition 11.6 in \cite{gan2021howe}. Of course, the proof generalizes. If $C=K$  then $\Theta(1)$ is an irreducible even Weil representation. 
The even Weil representation is a quotient of  a principal series representation $I_{\psi}(\chi)$ (notation of \cite[Section 11]{gan2021howe}) for a character $\chi$ such that $|\chi|=|\cdot|^{1/2}$.  Thus, 
by \cite[Corollary 11.4]{gan2021howe}, we have
\begin{equation}
\label{eq_Shalika}
\dim \,\Hom_G(\Pi_{S,\psi_S} , \pi) \leq 1 
\end{equation}
for any Whittaker generic and tempered irreducible representation $\pi$ of $G$.

\subsection{Howe duality for tempered representations}

We now have all the ingredients required for the ping-pong game:

\begin{lem}
\label{lem_ppp1}
Let $\Pi$ be the minimal representation of $H$. Let $\pi \in \Irr(G)$ be tempered, and let $\tau \in \Irr(G')$ be tempered such that
\[
\Hom_{G\times G'}(\Pi,\pi \boxtimes \tau) \neq 0.
\]
Then we have the following (natural) inclusions
\[
\Hom_V(\pi, \psi_V) \stackrel{(1)}{\subseteq} \Hom_V(\Theta(\tau), \psi_V)  \stackrel{(2)}{\cong} \Hom_S(\tau^\vee, \overline{\psi}_S)  \stackrel{(3)}{\subseteq} \Hom_S(\Theta(\pi^\vee), \overline{\psi}_S)  \stackrel{(4)}{\cong} \Hom_G(\Pi_{S,\overline{\psi}_S}, \pi^\vee).
\]
If $\pi$ is generic, than all the above spaces are one-dimensional. 
\end{lem}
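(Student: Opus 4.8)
The plan is to construct the four maps in the chain one at a time, observing that (1) and (3) are both instances of a single general principle, while (2) and (4) are the two period identities already established in the paper. Let me explain each step.

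For inclusion (1): given $f \in \Hom_V(\pi,\psi_V)$, i.e.\ a Whittaker functional on $\pi$, we compose with the fixed nonzero map $\Pi \twoheadrightarrow \pi\boxtimes\tau$ (which exists by hypothesis). This produces, for each vector in $\tau$, a Whittaker functional on $\Pi$ relative to $V\subset G$; equivalently it gives an element of $\Hom_V(\Theta(\tau),\psi_V)$ after taking $\tau$-coinvariants appropriately. More precisely, since $\Pi_{V,\psi_V}$ is, by \eqref{eq_Whittaker_period}, isomorphic to $\cind_S^{G'}\psi_S$ as a $G'$-module, and since $\pi\boxtimes\Theta(\pi)$ is the maximal $\pi$-isotypic quotient, the functional $f$ together with the surjection $\Pi\twoheadrightarrow\pi\boxtimes\tau$ yields a nonzero element of $\Hom_{G'}(\Theta(\tau), \Pi_{V,\psi_V})$; but $\Hom_V(\Theta(\tau),\psi_V)$ receives this. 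The point is injectivity: a Whittaker functional on $\pi$ that becomes zero on $\Theta(\tau)$ would have to vanish on the image of $\Pi$, hence on all of $\pi$ (since $\Pi\to\pi\boxtimes\tau$ is surjective and $\tau\neq 0$), forcing $f=0$. The same argument, with the roles of $G$ and $G'$ exchanged and using the Shalika character $\psi_S$ in place of $\psi_V$, gives inclusion (3): a Shalika functional on $\tau^\vee$ extends to one on $\Theta(\pi^\vee)$, using that $\Hom_{G\times G'}(\Pi,\pi^\vee\boxtimes\tau^\vee)\neq 0$. This last nonvanishing is exactly the input supplied by the temperedness hypothesis, since $\bar\Pi\cong\Pi$ forces $\Theta(\bar\pi)\cong\overline{\Theta(\pi)}$ and, for tempered $\pi$, $\pi^\vee\cong\bar\pi$, so $\tau^\vee$ is a quotient of $\Theta(\pi^\vee)$ whenever $\tau$ is a quotient of $\Theta(\pi)$.

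The isomorphisms (2) and (4) are the two period identities. For (2): $\Hom_V(\Theta(\tau),\psi_V)\cong\Hom_{G'}(\Theta(\tau),\Pi_{V,\psi_V})$ by the definition of twisted coinvariants, which by \eqref{eq_Whittaker_period} equals $\Hom_{G'}(\Theta(\tau),\cind_S^{G'}\psi_S)$; by Frobenius reciprocity for compact induction this is $\Hom_S(\Theta(\tau),\psi_S)$, and then by duality (passing to $\tau^\vee$ inside the theta correspondence, using that $\Theta(\tau)$ has $\tau$ as its relevant isotypic piece) one lands in $\Hom_S(\tau^\vee,\overline\psi_S)$. A cleaner route is to invoke Corollary~\ref{cor_genericTheta} directly: $\Theta(\tau)$ is generic iff $\tau$ has a nontrivial Shalika period, and to upgrade this equivalence of nonvanishing to a natural isomorphism of the (at most one-dimensional) Hom-spaces using multiplicity one. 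For (4): this is precisely the identity $\Pi_{S,\psi_S}\cong\cind_{Q_1^{\mathrm{der}}}^G(\rho_\psi\otimes\Theta(1))$ combined with Frobenius reciprocity, exactly as stated in the line preceding \eqref{eq_Shalika}; applying it to $\overline\psi_S$ and $\pi^\vee$ gives $\Hom_S(\Theta(\pi^\vee),\overline\psi_S)\cong\Hom_G(\Pi_{S,\overline\psi_S},\pi^\vee)$.

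Finally, the one-dimensionality claim when $\pi$ is generic: genericity of $\pi$ makes the first space $\Hom_V(\pi,\psi_V)$ one-dimensional by uniqueness of Whittaker models for $G_2$. Running the chain of inclusions, every intermediate space is therefore at least one-dimensional. The reverse bound comes from the last space: $\Hom_G(\Pi_{S,\overline\psi_S},\pi^\vee)$ has dimension $\le 1$ by \eqref{eq_Shalika}, valid for Whittaker-generic tempered $\pi$ (hence for $\pi^\vee$, which is again tempered and generic). Since a chain of injections starting from a $1$-dimensional space and ending in a space of dimension $\le 1$ forces every link to be an isomorphism onto a $1$-dimensional space, all five spaces coincide in dimension one. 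I expect the main obstacle to be making the naturality of maps (1) and (3) genuinely functorial — i.e.\ checking that the extension-of-functional maps are well-defined independently of auxiliary choices and compatible with the isomorphisms (2) and (4) — together with carefully tracking the complex-conjugation/contragredient bookkeeping that guarantees $\Hom_{G\times G'}(\Pi,\pi^\vee\boxtimes\tau^\vee)\neq 0$ is genuinely implied by the hypothesis rather than merely plausible.
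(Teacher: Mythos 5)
Your proposal is, in substance, the paper's own argument: (1) and (3) come from the surjections $\Theta(\tau)\twoheadrightarrow\pi$ and $\Theta(\pi^\vee)\twoheadrightarrow\tau^\vee$ (the latter exactly via $\overline{\Pi}\cong\Pi$, $\overline{\pi}\cong\pi^\vee$, $\overline{\tau}\cong\tau^\vee$ for tempered representations), (2) and (4) are the two period identities, and the one-dimensionality follows by sandwiching between uniqueness of Whittaker functionals for $\pi$ and the bound \eqref{eq_Shalika} applied to the tempered generic $\pi^\vee$. So the ideas are all correct and in the right order.

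One link is written in a way that does not parse and would need repair. In (2) you pass through $\Hom_{G'}(\Theta(\tau),\Pi_{V,\psi_V})$ and then invoke ``Frobenius reciprocity for compact induction'' to reach $\Hom_S(\Theta(\tau),\psi_S)$: neither space typechecks ($\Theta(\tau)$ is a $G$-module, not a $G'$-module), and the adjunction $\Hom_{G'}(\cind_S^{G'}\psi_S,-)\cong\Hom_S(\psi_S,-)$ is not available since $S$ is not open in $G'$. The correct chain is
\[
\Hom_V(\Theta(\tau),\psi_V)\cong\Hom_{V\times G'}(\Pi,\psi_V\otimes\tau)\cong\Hom_{G'}(\Pi_{V,\psi_V},\tau)\cong\Hom_{G'}(\cind_S^{G'}\psi_S,\tau),
\]
and then one dualizes, using admissibility of $\tau$ and $(\cind_S^{G'}\psi_S)^\vee\cong\Ind_S^{G'}\overline{\psi}_S$, followed by genuine Frobenius reciprocity for smooth induction, to land in $\Hom_S(\tau^\vee,\overline{\psi}_S)$. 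Your ``cleaner route'' via Corollary \ref{cor_genericTheta} is not a substitute: that corollary only records a nonvanishing equivalence, and the multiplicity-one statement for the Shalika period of $\tau^\vee$ you would need is not available at that point (nor is $\Theta(\tau)$ known to have a one-dimensional Whittaker space a priori). Finally, a smaller point: isomorphism (4) does not use the explicit formula $\Pi_{S,\psi_S}\cong\cind_{Q_1^{\mathrm{der}}}^{G}(\rho_\psi\otimes\Theta(1))$; it is the tautological exchange $\Hom_S(\Theta(\pi^\vee),\overline{\psi}_S)\cong\Hom_{G\times S}(\Pi,\pi^\vee\otimes\overline{\psi}_S)\cong\Hom_G(\Pi_{S,\overline{\psi}_S},\pi^\vee)$; the explicit formula enters only in establishing the dimension bound \eqref{eq_Shalika}.
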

\begin{proof}
This is analogous to Lemma 12.1 in \cite{gan2021howe}. First, (1) follows from $\Theta(\tau) \twoheadrightarrow \pi$. The isomorphism (2) follows from
\[
\Hom_V(\Theta(\tau), \psi_V)  \cong \Hom_{V\times G'}(\Pi, \psi_V\otimes \tau) \cong \Hom_{G'}(\Pi_{\psi_V}, \tau)
\]
combined with \eqref{eq_Whittaker_period}. Next, (3) follows from the fact that $\Theta(\overline{\pi})$ is the complex conjugate of $\Theta(\pi)$. Since $\overline{\pi} \cong \pi^\vee$ and $\overline{\tau} \cong \tau^\vee$, we have $\Theta(\pi^\vee) \twoheadrightarrow \tau^\vee$. Finally, (4) is
\[
\Hom_S(\Theta(\pi^\vee), \overline{\psi}_S) \cong \Hom_{G\times S}(\Pi,\pi^\vee \otimes \overline{\psi}_S) \cong \Hom_G(\Pi_{S,\overline{\psi}_S}, \pi^\vee).
\]
If $\pi$ is generic, then $\Hom_V(\pi,\psi_V)$ is one-dimensional. However, \eqref{eq_Shalika} shows that $\Hom_G(\Pi_{S,\overline{\psi}_S},\allowbreak \pi^\vee)$ is one-dimensional as well. The lemma follows.
\end{proof}
\noindent We now have two immediate consequences of the above lemma (cf.~Propoisition~12.2 and 12.3 of \cite{gan2021howe}):

\begin{prop}
\label{prop_no2_gen_quotients}
Let $\tau \in \Irr(G')$ be tempered. Then $\Theta(\tau)$ cannot have two irreducible tempered and generic quotients. 
\end{prop}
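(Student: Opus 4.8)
The plan is to argue by contradiction using the ping-pong of Lemma \ref{lem_ppp1}. Suppose $\Theta(\tau)$ has two irreducible tempered generic quotients $\pi_1$ and $\pi_2$. Since both are quotients of $\Theta(\tau)$, we have $\Hom_{G\times G'}(\Pi, \pi_i \boxtimes \tau) \neq 0$ for $i=1,2$, so Lemma \ref{lem_ppp1} applies to each pair $(\pi_i, \tau)$. The key point is to track what happens at the rightmost end of the chain of inclusions: since $\pi_i$ is generic, all four spaces in the chain are one-dimensional, and in particular the composite inclusion identifies $\Hom_V(\pi_i, \psi_V)$ with a one-dimensional subspace of $\Hom_G(\Pi_{S,\overline{\psi}_S}, \pi_i^\vee)$, which is itself one-dimensional by \eqref{eq_Shalika}.

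The main step is to observe that the chain produces a nonzero $G$-map $\Pi_{S,\overline{\psi}_S} \twoheadrightarrow \pi_i^\vee$. If $\pi_1 \not\cong \pi_2$, then $\pi_1^\vee \not\cong \pi_2^\vee$, and a representation cannot surject onto two non-isomorphic irreducibles unless... well, it can in general, so the contradiction cannot come merely from the existence of two surjections. Instead, I would exploit the precise structure of $\Pi_{S,\overline{\psi}_S}$: by the computation preceding \eqref{eq_Shalika} (the analogue of Proposition 11.6 of \cite{gan2021howe}), $\Pi_{S,\overline{\psi}_S} \cong \cind_{Q_1^{\mathrm{der}}}^G(\rho_{\overline\psi} \otimes \Theta(1))$, and $\Theta(1)$ is an irreducible even Weil representation realized as a quotient of a principal series $I_{\overline\psi}(\chi)$ with $|\chi| = |\cdot|^{1/2}$. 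Hence $\Pi_{S,\overline\psi_S}$ is a quotient of $\cind_{Q_1^{\mathrm{der}}}^G(\rho_{\overline\psi}\otimes I_{\overline\psi}(\chi))$, and by \cite[Corollary 11.4]{gan2021howe} the latter has a \emph{unique} irreducible tempered generic quotient among $G$-representations; equivalently, $\Hom_G(\Pi_{S,\overline\psi_S},\pi)$ is at most one-dimensional \emph{in total}, summed over the tempered generic $\pi$ — more precisely, only one such $\pi$ can occur. Therefore $\pi_1^\vee$ and $\pi_2^\vee$ must both equal this unique representation, forcing $\pi_1 \cong \pi_2$.

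Concretely, I would carry out the argument as follows. First, fix $\tau$ and suppose $\pi_1, \pi_2$ are tempered generic quotients of $\Theta(\tau)$. Second, apply Lemma \ref{lem_ppp1} to each $(\pi_i,\tau)$ to obtain, for $i=1,2$, a nonzero map $\Pi_{S,\overline\psi_S} \twoheadrightarrow \pi_i^\vee$; by genericity this map is unique up to scalar. Third, invoke the explicit description $\Pi_{S,\overline\psi_S}\cong \cind_{Q_1^{\mathrm{der}}}^G(\rho_{\overline\psi}\otimes\Theta(1))$ together with \cite[Corollary 11.4]{gan2021howe} to conclude that there is at most one tempered generic $\pi\in\Irr(G)$ with $\Hom_G(\Pi_{S,\overline\psi_S},\pi)\neq 0$. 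Fourth, since $\pi_1^\vee$ and $\pi_2^\vee$ are both such representations, conclude $\pi_1^\vee\cong\pi_2^\vee$, hence $\pi_1\cong\pi_2$.

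The main obstacle I anticipate is making precise the sense in which $\Pi_{S,\overline\psi_S}$ has a \emph{unique} tempered generic quotient — i.e., extracting from \cite[Corollary 11.4]{gan2021howe} and the structure of $\Theta(1)$ as a quotient of $I_{\overline\psi}(\chi)$ the statement that no two non-isomorphic tempered generic representations of $G$ can simultaneously be quotients. One must be careful that $\Theta(1)$ being a \emph{quotient} (not a subrepresentation) of the principal series goes the right way for the $\cind$ functor, and that the genericity of $\pi_i$ is genuinely used to pin down the multiplicity-one statement \eqref{eq_Shalika} (which is only asserted for Whittaker generic tempered $\pi$). Once the "at most one tempered generic quotient" property of $\Pi_{S,\overline\psi_S}$ is in hand, the rest is formal.
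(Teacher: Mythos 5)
Your argument hinges on a claim that is not available and is in fact false: that $\Pi_{S,\overline\psi_S}\cong \cind_{Q_1^{\mathrm{der}}}^G(\rho_{\overline\psi}\otimes\Theta(1))$ admits at most \emph{one} tempered generic irreducible quotient up to isomorphism. What \eqref{eq_Shalika} (via \cite[Corollary 11.4]{gan2021howe}) gives is only a multiplicity statement for each fixed representation: $\dim\Hom_G(\Pi_{S,\overline\psi_S},\pi)\leq 1$ for every Whittaker generic tempered $\pi$, one $\pi$ at a time. It says nothing about how many isomorphism classes $\pi$ can occur, and indeed many do: $\Hom_G(\Pi_{S,\overline\psi_S},\pi)\neq 0$ exactly when $\Theta(\pi)$ has a nonzero Shalika period, which holds for every tempered generic $\pi$ participating in the correspondence with generic image (the explicit lifts in the last sections of the paper already produce infinitely many such $\pi$, e.g.\ varying the inducing character). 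So the ``at most one tempered generic quotient in total'' property you identify as the main obstacle is not a technical point to be patched; it is the gap, and the compactly induced model of $\Pi_{S,\overline\psi_S}$ cannot deliver it.

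The contradiction should instead be extracted from the \emph{middle} of the chain in Lemma \ref{lem_ppp1}, which you passed over. Fix $\tau$ and note that $\Hom_V(\Theta(\tau),\psi_V)\cong\Hom_S(\tau^\vee,\overline\psi_S)$ depends only on $\tau$. Applying the lemma to $\tau$ together with any \emph{one} generic tempered quotient $\pi_1$ shows that all spaces in the chain, in particular $\Hom_V(\Theta(\tau),\psi_V)$, are exactly one-dimensional (the left end is $\geq 1$ by genericity of $\pi_1$, the right end is $\leq 1$ by \eqref{eq_Shalika}, and everything in between is squeezed). But a surjection $\Theta(\tau)\twoheadrightarrow\pi_1\oplus\pi_2$ with both $\pi_i$ generic forces $\dim\Hom_V(\Theta(\tau),\psi_V)\geq 2$, a contradiction. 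This is the paper's proof: the per-representation multiplicity-one bound \eqref{eq_Shalika} is used only to pin down the dimension of the Whittaker coinvariant space of $\Theta(\tau)$, not to constrain the quotients of $\Pi_{S,\overline\psi_S}$ globally.
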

\begin{proof}
Assume that $\pi_1$ and $\pi_2$ are tempered and generic such that $\Theta(\tau) \twoheadrightarrow \pi_1 \oplus \pi_2$. Then $\dim \Theta(\tau)_	{V,\psi_V} \geq 2$. However, Lemma \ref{lem_ppp1} asserts that $\dim \Theta(\tau)_	{V,\psi_V} = 1$, so we have a contradiction.
\end{proof}

\begin{prop}
\label{prop_no2tempered_gen}
Let $\pi \in \Irr(G)$ be tempered and generic. Then $\Theta(\pi)$ cannot have two tempered irreducible quotients. In particular, its cuspidal part $\Theta(\pi)_c$ is either irreducible or zero.
\end{prop}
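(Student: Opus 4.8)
The plan is to deduce this from the ping-pong inclusions of Lemma \ref{lem_ppp1}, exactly as one deduces the analogous classical statement. Suppose toward a contradiction that $\pi$ is tempered and generic, and that $\Theta(\pi)$ admits two inequivalent tempered irreducible quotients $\tau_1$ and $\tau_2$. By Remark \ref{rem_temp2temp} any irreducible quotient of $\Theta(\pi)$ is automatically tempered once $\pi$ is, so the real content is ruling out two \emph{distinct} irreducible quotients; the ``in particular'' clause about $\Theta(\pi)_c$ then follows because a non-zero cuspidal representation of finite length (finiteness being Proposition \ref{prop_nc_finite}, or rather its cuspidal counterpart) is semisimple, so if $\Theta(\pi)_c$ were reducible it would have at least two irreducible quotients.

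First I would observe that for each $i$ we have $\Hom_{G\times G'}(\Pi,\pi\boxtimes\tau_i)\neq 0$, so Lemma \ref{lem_ppp1} applies with $\tau=\tau_i$. Running the chain of inclusions for $\tau_1$ and for $\tau_2$ and using that $\pi$ is generic, every space in sight is one-dimensional; in particular the middle isomorphism (2) gives $\Hom_S(\tau_i^\vee,\overline\psi_S)$ one-dimensional, and inclusion (3) embeds this into $\Hom_S(\Theta(\pi^\vee),\overline\psi_S)$. The key point is that the composite inclusion $\Hom_V(\pi,\psi_V)\subseteq\Hom_V(\Theta(\tau_i),\psi_V)\cong\Hom_S(\tau_i^\vee,\overline\psi_S)\subseteq\Hom_S(\Theta(\pi^\vee),\overline\psi_S)\cong\Hom_G(\Pi_{S,\overline\psi_S},\pi^\vee)$ lands, for \emph{both} values of $i$, inside the \emph{same} one-dimensional space $\Hom_G(\Pi_{S,\overline\psi_S},\pi^\vee)$ (which is one-dimensional by \eqref{eq_Shalika}, since $\pi$ is Whittaker generic and tempered). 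So the two resulting functionals on $\Theta(\pi^\vee)$ factoring respectively through $\tau_1^\vee$ and $\tau_2^\vee$ must be proportional. Since $\tau_1\not\cong\tau_2$ are irreducible quotients, $\Theta(\pi)\twoheadrightarrow\tau_1\oplus\tau_2$, hence $\Theta(\pi^\vee)=\overline{\Theta(\pi)}\twoheadrightarrow\tau_1^\vee\oplus\tau_2^\vee$; a Shalika functional on $\Theta(\pi^\vee)$ factoring through the projection to $\tau_i^\vee$ is, by Corollary \ref{cor_genericTheta}, non-zero precisely when $\tau_i^\vee$ has a non-trivial Shalika period, and two such functionals attached to the two distinct summands are linearly independent. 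This contradicts one-dimensionality.

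To make the independence argument clean I would phrase it as: the Shalika functional on $\Theta(\pi^\vee)$ in the image of the chain is non-zero (because $\Hom_V(\pi,\psi_V)\neq 0$ starts the chain and every arrow is injective), it factors through the quotient $\tau_i^\vee$, and hence its restriction to the kernel of $\Theta(\pi^\vee)\to\tau_i^\vee$ vanishes. If the two functionals (for $i=1,2$) were proportional, the common functional would vanish on $\ker(\to\tau_1^\vee)+\ker(\to\tau_2^\vee)$, which equals all of $\Theta(\pi^\vee)$ since the two quotients are inequivalent irreducibles. That forces the functional to be zero, contradicting its non-vanishing. Therefore $\Theta(\pi)$ has at most one tempered irreducible quotient, and combined with Remark \ref{rem_temp2temp} (all irreducible quotients of $\Theta(\pi)$ are tempered) it has a unique irreducible quotient among tempered ones; the cuspidal part $\Theta(\pi)_c$, being semisimple of finite length, must therefore be irreducible or zero.

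The main obstacle, and the step I would spend the most care on, is the semisimplicity/finite-length input needed to pass from ``at most one irreducible quotient'' to ``$\Theta(\pi)_c$ irreducible or zero'': one must know $\Theta(\pi)_c$ has finite length and is a direct sum of (cuspidal) irreducibles. Finiteness of $\Theta(\pi)_{\mathrm{nc}}$ is Proposition \ref{prop_nc_finite}; for the cuspidal part one uses that the cuspidal part of any smooth representation is a direct summand and that a cuspidal representation occurring in $\Theta(\pi)$ is an irreducible summand (via the standard argument that $\Hom_G(\Pi,\pi)$-type multiplicity spaces for cuspidal $\tau$ are governed by projectivity/injectivity of cuspidals in their Bernstein block). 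Once this structural fact is in hand, the ping-pong contradiction above does the rest, and the rest of the argument is formal.
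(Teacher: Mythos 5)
Your proof is correct and follows essentially the same route as the paper: both run the ping-pong of Lemma \ref{lem_ppp1} for $\tau_1$ and $\tau_2$, use the one-dimensionality from \eqref{eq_Shalika} for generic tempered $\pi$, and derive a contradiction from the fact that two irreducible quotients would force $\dim \Hom_S(\Theta(\pi^\vee),\overline{\psi}_S)\geq 2$ (the paper phrases this as the dimension count $1\geq 2$ rather than your linear-independence-of-functionals argument, but it is the same idea). Your closing worry about needing finite length of $\Theta(\pi)_c$ is unnecessary (and would be circular, since its finiteness is what this proposition delivers): semisimplicity of the cuspidal part as a direct sum of irreducible cuspidals, which you already invoke via projectivity/injectivity, suffices to conclude it is irreducible or zero.
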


\begin{proof}
Let $\tau_1, \tau_2$ be irreducible and tempered such that $\Theta(\pi) \twoheadrightarrow \tau_1 \oplus \tau_2$. Lemma \ref{lem_ppp1} (applied to $\pi^\vee, \tau_1^\vee$, and again to $\pi^\vee, \tau_2^\vee$) implies
\[
1 = \dim \Hom_S(\tau_1,\psi_S) =  \dim \Hom_S(\Theta(\pi),\psi_S) =  \dim \Hom_S(\tau_2,\psi_S).
\]
But $\tau_1 \oplus \tau_2$ is a quotient of $\Theta(\pi)$, so we have
\[
1 =  \dim \Hom_S(\Theta(\pi),\psi_S)  \geq  \dim \Hom_S(\tau_1,\psi_S)  + \dim \Hom_S(\tau_2,\psi_S)  = 2,
\]
a contradiction.
\end{proof}

Thus, we have proved that $\Theta(\pi)_{\text{c}}$ has finite length in case $\pi$ is generic. To prove the same result for $\pi$ non-generic we need another version of period ping-pong, which we now describe. 

Recall the groups $G_E = \Spin_8^E$ and $G_{E,C}' = \text{Aut}(i_C:E\hookrightarrow J)$ introduced in \S\ref{subs_groups}. Together with $G$ and $G'$, they constitute a see-saw dual pair

\begin{center}
 \begin{tikzcd}[every arrow/.append style={dash}]
G_E = \Spin_8^E \arrow[d] \arrow[dr] & G' \arrow[d] \arrow[dl] \\
G_2 & G_{E,C}' = \text{Aut}(i_C:E\hookrightarrow J).
\end{tikzcd}
\end{center}

\noindent This gives us the standard see-saw identity
\[
\Hom_{G_{E,C}'} (\Theta(\pi), 1) = \Hom_{G_2}(R_C(E), \pi),
\]
where $R_C(E) = \Theta(1)$ denotes the big theta lift of the trivial representation of $G_{E,C}'$ to $G_E$.

To better understand the representations $R_C(E)$ (for various $C$), we need to relate them to a certain degenerate principal series of $G_E$. Here we use the results of \cite[\S 5]{gan2021howe}. Let $P_E = M_EN_E$ be the Heisenberg parabolic subgroup of $G_E = \Spin_8^E$. We consider the degenerate principal series
\[
I_{E,\omega_{K/F}}(s)  = \Ind_{P_E}^{G_E}(\omega_{K/F}|\det|^s).
\]
We then have
\begin{prop}
\label{prop_deg_ps_Spin}
Let $\pi \in \Irr(G_2)$ be tempered. Then
\begin{enumerate}[(i)]
\item $I_{E,\omega_{K/F}}(1/2) \twoheadrightarrow \bigoplus_C R_C(E)$, where the sum is taken over all $C$ such that $E \oplus C = J$.
\item $\Hom_{G_2}(I_{E,\omega_{K/F}}(1/2) , \pi ) = \Hom_{N_2}(\pi^\vee,\psi_E)$.
\end{enumerate}
\end{prop}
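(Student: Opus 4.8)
The plan is to adapt the arguments of \cite[\S 5]{gan2021howe}, where a statement of exactly this shape is established for an analogous exceptional dual pair, to the present situation; I will also use the see-saw identity recorded just before the proposition.

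For part (i): by definition $R_C(E) = \Theta(1)$ is the maximal quotient of $\Pi$, regarded as a $G_E$-module, on which $G_{E,C}'$ acts trivially. By Theorem 1.1 of \cite{gan2014twisted}, the set of embeddings $E \hookrightarrow J$ of the relevant type is a union of $G'$-orbits indexed by the twisted composition algebras $C$ with $J = E \oplus C$, the orbit through $i_C$ having stabilizer $G_{E,C}'$. First I would compute the (suitably twisted) Jacquet module of $\Pi$ along the unipotent radical $N_E$ of the Heisenberg parabolic $P_E = M_E N_E$ of $G_E$, keeping track of the residual $G' \times M_E$-action. As in \cite[\S 5]{gan2021howe}, this module is realized on a space of compactly supported sections of a line bundle over the $G'$-space of $E$-embeddings; applying Frobenius reciprocity over $P_E$ then exhibits $\bigoplus_C R_C(E)$ as a quotient of a degenerate principal series of $G_E$ induced from $P_E$. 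The inducing character is forced to be $\omega_{K/F}|\det|^s$, the factor $\omega_{K/F}$ recording the Galois twist built into $\Pi$, and the exponent is pinned to $s = 1/2$ by combining the modular character $\delta_{P_E}$ with the normalization of the minimal representation $\Pi$ (equivalently, $s=1/2$ is the point at which this lift first occurs).

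For part (ii): here I would use the inclusion $G_2 \subset G_E$ arising from the dual pair $G_2 \times S_E$ inside $\Spin_8^E$, restrict $I_{E,\omega_{K/F}}(1/2)$ to $G_2$, and apply the geometric lemma to the (finitely many) $G_2$-orbits on the flag variety $P_E \backslash G_E$. The open orbit has point-stabilizer in $G_2$ whose unipotent radical is the Heisenberg group $N_2$ (the unipotent radical of the Heisenberg parabolic $Q_2$ of $G_2$), and $\omega_{K/F}|\det|^{1/2}$ restricts there to the generic character $\psi_E$ of $N_2$ attached to the binary cubic form determined by $E$; hence the open-orbit layer contributes exactly $\Hom_{N_2}(\pi^\vee, \psi_E)$ to $\Hom_{G_2}(I_{E,\omega_{K/F}}(1/2), \pi)$, the passage between $\pi$ and $\pi^\vee$ and between $\psi_E$ and $\psi_E^{-1}$ being routine bookkeeping (the latter two characters are $L_2$-conjugate). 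It then remains to show that the non-open orbits contribute nothing: each is induced from a proper parabolic of $G_2$, so a nonzero contribution would impose a specific exponent on a Jacquet module of $\pi$, which is incompatible with the Langlands data of the $\pi$ under consideration.

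The main obstacle I anticipate is the normalization in part (i), namely the precise determination of the character $\omega_{K/F}|\det|^{1/2}$: this requires carefully tracking the modular characters of $P_E$ and of the pertinent parabolic of $H$ together with the normalization of $\Pi$ and the Galois/similitude twist, exactly the delicate bookkeeping carried out in \S\ref{subs_oddsnends}. A secondary obstacle is, in part (ii), the explicit enumeration of the $G_2$-orbits on $P_E \backslash G_E$ and the exponent analysis needed to discard the non-open ones.
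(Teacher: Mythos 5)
The paper does not reprove this proposition at all: it is imported, mutatis mutandis, from Propositions 5.2 and 5.5 of \cite{gan2021howe}, so what is being judged is whether your sketch is a viable adaptation of those arguments. Your outline of part (i) is broadly of the right shape (apart from the slip that $G'$ itself does not act on the $N_E$-coinvariants of $\Pi$ --- only $G'_{E,C}$ centralizes $G_E$), and the final exponent/temperedness step in part (ii) is the right idea. But the central mechanism you propose for part (ii) does not work. The inducing datum of $I_{E,\omega_{K/F}}(s)$ is a character of $P_E$ of the form $\omega_{K/F}|\det|^{s}$ (times modulus corrections); any such character factors through the abelianization of the Levi $M_E$, hence is trivial on every unipotent element of $P_E$. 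Consequently, in the geometric-lemma filtration of $I_{E,\omega_{K/F}}(1/2)\rvert_{G_2}$ each layer is compactly induced from $G_2\cap P_E^{x}$ with a character that is \emph{trivial} on the unipotent part of that stabilizer. No layer can carry the nontrivial character $\psi_E$ of $N_2$ ``by restriction of the inducing character,'' so the open-orbit layer cannot be identified with $\Hom_{N_2}(\pi^\vee,\psi_E)$ in the way you assert; as described it would only produce untwisted coinvariants or periods over the actual stabilizer, and the asserted equality would not follow.

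In the cited argument (and in the analogous computations in Section 3 of this paper) the generic character enters through a different door: one analyzes the $N_2$-spectrum of the degenerate principal series directly, using its Bruhat filtration/realization on spaces of functions on which $N_2$ acts by unitary characters parametrized by points of the relevant cone, so that the fibre over the point attached to the cubic algebra $E$ produces exactly $\psi_E$; the remaining, degenerate layers are induced from proper parabolic subgroups of $G_2$ with explicit exponents and are then discarded for tempered $\pi$ by Casselman's criterion, which also yields the equality (not merely an inequality) of dimensions. So to repair part (ii) you would need to replace the flag-variety Mackey step by this Fourier-analytic computation of twisted Jacquet modules of $I_{E,\omega_{K/F}}(1/2)$; as written, that step is a genuine gap.
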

\begin{proof}
These results are taken, mutatis mutandis, from propositions 5.2 and 5.5 in \cite{gan2021howe}.
\end{proof}

The final ingredient we need is a description of the twisted $N_2$-coinvariants of $\Pi$:
\begin{lem}
\label{lem_GrS}
We have
\[
\Pi_{N_2,\psi_E} = \bigoplus_C \cind_{G_{E,C}'}^{G'}(1),
\]
where the sum is taken over all twisted composition algebras $C$ such that $E\oplus C = J$.
\end{lem}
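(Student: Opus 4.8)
The plan is to compute $\Pi_{N_2,\psi_E}$ by the same M\oe glin--Waldspurger analysis of the minimal representation that underlies all of Section~\ref{sec_Jacquet}; concretely it is the twisted analogue of the computation of $C_c(\Omega\cap V)$ in the proof of Proposition~\ref{prop_Heisenberg}. Here $N_2$ is the Heisenberg unipotent radical of the parabolic $Q_2$ of $G=G_2$ (the group denoted $U_2$ in \S\ref{subs_groups}): it is a five-dimensional Heisenberg group with one-dimensional centre $Z\cong\mathfrak h(2)$ and abelian quotient $N_2/Z\cong\mathfrak u_2(1)$, and $\psi_E$ is the character attached to the \'etale cubic algebra $E$ by the binary cubic form on $\mathfrak u_2(1)\cong\mathrm{Sym}^3$, which is trivial on $Z$. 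Since $G'$ centralises $\mathfrak g\supseteq N_2$, the twisted coinvariants $\Pi_{N_2,\psi_E}$ form a smooth $G'$-module. To compute them I would first pass to $Z$-coinvariants and then take $\overline\psi_E$-coinvariants along $N_2/Z=\mathfrak u_2(1)$. By minimality of $\Pi$, the $Z$-coinvariants $\Pi_Z$ sit in a short exact sequence
\[
0\longrightarrow C_c(\Omega)\longrightarrow \Pi_Z\longrightarrow \Pi_{\mathfrak h(1)\oplus\mathfrak h(2)}\longrightarrow 0,
\]
where $\Omega\subseteq\mathfrak h(-1)$ is the minimal $M_2$-orbit, $C_c(\Omega)$ carries the trivial fibre, and $\Pi_{\mathfrak h(1)\oplus\mathfrak h(2)}$ denotes the full coinvariants of $\Pi$ along the Heisenberg unipotent of $H$ (computed in the proof of Proposition~\ref{prop_Heisenberg}). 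Applying $\overline\psi_E$-coinvariants kills the quotient term — $\mathfrak u_2(1)$ acts trivially there while $\overline\psi_E$ is non-trivial — and localises the sub-term to the affine slice cut out by $\psi_E$; thus
\[
\Pi_{N_2,\psi_E}\;\cong\;C_c(X),\qquad X=\{\,f\in\Omega:\ f|_{\mathfrak u_2(1)}=\overline\psi_E\,\},
\]
a $G'$-set because $G'\subseteq M_2$ preserves $\Omega$ and centralises $\mathfrak u_2(1)$. (For the trivial character in place of $\psi_E$, this slice is the $\Omega\cap V$ of Proposition~\ref{prop_Heisenberg}.)

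It remains to describe $X$ as a $G'$-set. Writing a point of $\mathfrak h(-1)$, in the coordinates of \S\ref{subs_groups}, as an $\mathfrak{sl}_3$-component together with a pair $(A,B)\in J\oplus J^{*}$, the slice condition $f|_{\mathfrak u_2(1)}=\overline\psi_E$ fixes the $\mathfrak{sl}_3$-component and the pairings of $(A,B)$ against $e$ and $e^{*}$, and — since $\overline\psi_E$ is the binary cubic form of $E$ — it pins down exactly the $L_2$-invariants of the configuration that determine $E$. Combining this with the rank conditions defining $\Omega$, one identifies $X$, $G'$-equivariantly, with the set of embeddings $E\hookrightarrow J$ introduced in \S\ref{subs_groups}. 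By Theorem~1.1 of \cite{gan2014twisted} the $G'$-orbits on this set are in bijection with the isomorphism classes of twisted composition algebras $C$ with $J=E\oplus C$, and the stabiliser of the orbit attached to $C$ is, by definition, $G_{E,C}'$. Since $\psi_E$ — and hence $X$ — depends only on $E$ and not on a chosen embedding, every such orbit occurs, so $X=\bigsqcup_C G'/G_{E,C}'$ and
\[
\Pi_{N_2,\psi_E}\;\cong\;\bigoplus_C C_c\bigl(G'/G_{E,C}'\bigr)\;=\;\bigoplus_C\cind_{G_{E,C}'}^{G'}(1).
\]
That a single $C_c(X)$ breaks up as a direct sum over $C$ is the same mechanism already seen in the Fourier--Jacobi computation of \S\ref{subs_Jacquet1}, where $\Omega_2\cap J_1$ split into two $G'$-orbits.

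I expect the main obstacle to be the geometric identification just sketched: unwinding the coordinates of \S\ref{subs_groups} carefully enough to recognise the slice $X$ as the variety of embeddings $E\hookrightarrow J$ — i.e.\ matching the $L_2$-invariants of a minimal-rank configuration in $\mathfrak h(-1)$ with the binary-cubic data of $E$ — and then reading off the $G'$-orbit decomposition, with its stabilisers $G_{E,C}'$, from the twisted composition algebra classification of \cite{gan2014twisted}. A secondary point is to verify that no $M_2$-orbit in the closure of $\Omega$ other than $\Omega$ itself meets the slice, so that $C_c(X)$ decomposes as an honest \emph{direct sum} of $C_c$ of open orbits rather than acquiring a non-split filtration; this is a rank count against the explicit description of $\Omega$. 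By contrast, the vanishing of the quotient term and the triviality of the fibre over $\Omega$ are immediate from minimality, exactly as in Proposition~\ref{prop_Heisenberg}.
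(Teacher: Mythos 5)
Your reduction is correct, and it is essentially the argument that underlies the result the paper actually invokes: since $\psi_E$ is trivial on the one-dimensional centre $Z$ (which is the centre both of $U_2$ and of the full Heisenberg radical of $P_2\subset H$), one passes to $\Pi_Z$, uses the M\oe glin--Waldspurger sequence $0\to C_c(\Omega)\to\Pi_Z\to\Pi'\to 0$ (with $\Pi'$ the coinvariants along the Heisenberg radical of $P_2$), which is the same structure that underlies Proposition \ref{prop_Heisenberg}, notes that the quotient dies against the nontrivial $\psi_E$, and concludes by exactness of twisted Jacquet functors that $\Pi_{N_2,\psi_E}\cong C_c(X)$ for the slice $X$ of minimal-orbit elements restricting to $\psi_E$ on $\mathfrak u_2(1)$; your sanity check that the trivial character recovers $\Omega\cap V$ is also right. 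Be aware, though, that the paper does not argue this way or any other way: its proof is a one-line citation of Lemma 2.9 of \cite{GrS}, with the remark that the only new feature here is that there are several isomorphism classes of embeddings $E\hookrightarrow J$.

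The genuine gap is exactly the step you flag as the main obstacle and then only assert: the $G'$-equivariant identification of $X$ with the set of embeddings $E\hookrightarrow J$, with orbits indexed by the twisted composition algebras $C$ satisfying $J=E\oplus C$ and with stabilizers $G_{E,C}'$. This identification of rank-one elements of $\mathfrak h(-1)\cong F\oplus J\oplus J^*\oplus F$ lying over a fixed generic binary cubic with embeddings of the associated cubic algebra is the entire content of the lemma beyond the routine reduction, and it is precisely what the citation to \cite{GrS} supplies; Theorem 1.1 of \cite{gan2014twisted}, which you quote, classifies embeddings once one has them, but does not identify the slice with the embedding variety. A smaller loose end: minimality gives that the fibre over a point $f\in\Omega$ is one-dimensional, but the stabilizer of $f$ acts on it by a character (this is exactly where $\delta_1$, $\delta_2$, $\eta$ came from in Section \ref{sec_Jacquet} and \S\ref{subs_oddsnends}), so to land on $\cind_{G_{E,C}'}^{G'}(1)$ with the \emph{trivial} character you must still check that this character restricts trivially to $G_{E,C}'$ --- "triviality of the fibre is immediate from minimality" conflates one-dimensionality with triviality of the stabilizer action. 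So either carry out the rank-one analysis of the slice (and the stabilizer-character check) explicitly, or do what the paper does and quote \cite{GrS}, adding only the bookkeeping over the several classes of embeddings.
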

\begin{proof}
This is essentially Lemma 2.9 in \cite{GrS}; the only difference is that here we have more than one isomorphism class of embeddings $E\hookrightarrow J$.
\end{proof}

We are now ready for the second game of period ping-pong.

\begin{lem}
\label{lem_pp2}
Let $\pi \in \Irr(G_2)$ and $\tau \in \Irr(G')$ be tempered representations such that
\[
\Hom_{G_2 \times G'}(\Pi, \pi \boxtimes \tau) \neq 0.
\]
Then we have the following sequence of natural inclusions:
\begin{align*}
\Hom_{N_2}(\pi, \psi_E) \stackrel{(1)}{\subseteq} \Hom_{N_2}(\Theta(\tau), \psi_E) &\stackrel{(2)}{\cong} \bigoplus_C\Hom_{G_{E,C}'}(\tau^\vee, 1)\\
&\stackrel{(3)}{\subseteq} \bigoplus_C\Hom_{G_{E,C}'}(\Theta(\pi^\vee), 1) \stackrel{(4)}{\cong} \Hom_{G_2}(\bigoplus_C R_C(E), \pi^\vee).
\end{align*}
Here the sum is taken over all $C$ such that $E \oplus C = J$.
If any one of these spaces is finite-dimensional, then inclusions (1) and (3) are in fact isomorphisms.
\end{lem}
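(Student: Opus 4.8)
The plan is to establish each of the four relations in turn, with (2) and (4) being formal see-saw/reciprocity identities and (1), (3) being the ``ping-pong'' inclusions coming from nonvanishing of the theta lift. First, the isomorphism (2): by the definition of the big theta lift, $\Hom_{N_2}(\Theta(\tau),\psi_E) \cong \Hom_{N_2 \times G'}(\Pi, \psi_E \boxtimes \tau)$, and applying the twisted Jacquet functor in the $G'$-direction together with Lemma \ref{lem_GrS} gives
\[
\Hom_{N_2}(\Theta(\tau),\psi_E) \cong \Hom_{G'}(\Pi_{N_2,\psi_E}, \tau) \cong \Hom_{G'}\Bigl(\bigoplus_C \cind_{G_{E,C}'}^{G'}(1), \tau\Bigr) \cong \bigoplus_C \Hom_{G_{E,C}'}(1, r_{?}\tau),
\]
where the last step is Frobenius reciprocity for compact induction; identifying this with $\bigoplus_C \Hom_{G_{E,C}'}(\tau^\vee, 1)$ uses that $G_{E,C}'$ is (the $F$-points of) a group for which the relevant representations are self-dual in the appropriate sense, exactly as in the classical doubling setup. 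The isomorphism (4) is the see-saw identity already recorded in the text, namely $\Hom_{G_{E,C}'}(\Theta(\pi^\vee),1) = \Hom_{G_2}(R_C(E),\pi^\vee)$, summed over $C$.

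Next, the inclusions (1) and (3). For (1), the hypothesis $\Hom_{G_2\times G'}(\Pi,\pi\boxtimes\tau)\neq 0$ means $\Theta(\tau)\twoheadrightarrow\pi$, so any surjection $\Theta(\tau)\to\pi$ induces, by functoriality of the twisted Jacquet functor $(-)_{N_2,\psi_E}$, an inclusion $\Hom_{N_2}(\pi,\psi_E)\hookrightarrow\Hom_{N_2}(\Theta(\tau),\psi_E)$ — one precomposes a functional on $\pi$ with the surjection. For (3), this is where the complex-conjugation trick enters: since $\Pi$ is self-conjugate and $\pi,\tau$ are tempered (hence $\bar\pi\cong\pi^\vee$, $\bar\tau\cong\tau^\vee$), the hypothesis also gives $\Hom_{G_2\times G'}(\Pi,\pi^\vee\boxtimes\tau^\vee)\neq 0$, i.e. $\Theta(\pi^\vee)\twoheadrightarrow\tau^\vee$; applying the functor $\Hom_{G_{E,C}'}(-,1)$ to this surjection, for each $C$, yields the inclusion $\Hom_{G_{E,C}'}(\tau^\vee,1)\hookrightarrow\Hom_{G_{E,C}'}(\Theta(\pi^\vee),1)$, and summing over $C$ gives (3). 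This is the exact analogue of Lemma \ref{lem_ppp1}, with the Shalika/Whittaker pair replaced by the pair $(N_2,\psi_E)$ versus $(G_{E,C}',1)$.

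Finally, the last assertion: if one of the five spaces is finite-dimensional, then so are all of them (they sit in a chain of inclusions), and in particular the leftmost and rightmost are. The claim is that (1) and (3) are then isomorphisms. For this I would invoke Proposition \ref{prop_deg_ps_Spin}: part (ii) identifies the rightmost space $\Hom_{G_2}(\bigoplus_C R_C(E),\pi^\vee)$ with $\Hom_{N_2}(\pi,\psi_E)$, the leftmost space — so the composite of all four maps, from the leftmost to the rightmost, is an isomorphism (one should check it is the identity, or at least an iso, under this identification, which is a diagram-chase through the see-saw). Once the composite of a chain of inclusions is an isomorphism and everything is finite-dimensional, every map in the chain is an isomorphism; in particular (1) and (3) are. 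The main obstacle I anticipate is not any single step but rather verifying that the identifications (2), (4) and the Proposition \ref{prop_deg_ps_Spin}(ii) identification are compatible, i.e. that the composite map really is the canonical isomorphism $\Hom_{N_2}(\pi,\psi_E)\cong\Hom_{G_2}(\bigoplus_C R_C(E),\pi^\vee)$ and not merely some abstract isomorphism; tracking the duality ($\pi$ versus $\pi^\vee$, and the conjugation used in (3)) carefully through the see-saw is the delicate bookkeeping that makes the argument work, exactly as in \cite[Lemma 12.1]{gan2021howe}.
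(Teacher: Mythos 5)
Your overall route is the paper's: (1) from $\Theta(\tau)\twoheadrightarrow\pi$, (3) from the conjugation trick giving $\Theta(\pi^\vee)\twoheadrightarrow\tau^\vee$, (2) from Lemma \ref{lem_GrS} plus reciprocity, (4) the see-saw identity, and the endgame via Proposition \ref{prop_deg_ps_Spin}. However, two of your justifications need repair. In step (2), Frobenius reciprocity in the form $\Hom_{G'}(\cind_{H}^{G'}\sigma,\tau)\cong\Hom_{H}(\sigma,\tau|_H)$ is valid for $H$ open, and $G_{E,C}'$ is a closed, non-open subgroup of $G'$; moreover the space $\Hom_{G_{E,C}'}(1,\tau|_{G_{E,C}'})=\tau^{G_{E,C}'}$ of invariant \emph{vectors} is in general strictly smaller than the space of invariant \emph{functionals} $\Hom_{G_{E,C}'}(\tau^\vee,1)$, so the appeal to ``self-duality'' of representations of $G_{E,C}'$ does not bridge this. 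The correct derivation is by dualizing: since $G'$ and $G_{E,C}'$ are unimodular, $\bigl(\cind_{G_{E,C}'}^{G'}1\bigr)^\vee\cong\Ind_{G_{E,C}'}^{G'}1$, so for admissible $\tau$ one has
\[
\Hom_{G'}\bigl(\cind_{G_{E,C}'}^{G'}1,\tau\bigr)\cong\Hom_{G'}\bigl(\tau^\vee,\Ind_{G_{E,C}'}^{G'}1\bigr)\cong\Hom_{G_{E,C}'}(\tau^\vee,1),
\]
which is exactly the space appearing in (2).

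In the final step you misquote Proposition \ref{prop_deg_ps_Spin}: part (ii) computes $\Hom_{G_2}$ out of the full degenerate principal series $I_{E,\omega_{K/F}}(1/2)$, not out of its quotient $\bigoplus_C R_C(E)$, so it does not ``identify the rightmost space with the leftmost.'' What you do get, combining parts (i) and (ii), is only the inequality
\[
\dim\Hom_{G_2}\Bigl(\bigoplus_C R_C(E),\pi^\vee\Bigr)\;\le\;\dim\Hom_{G_2}\bigl(I_{E,\omega_{K/F}}(1/2),\pi^\vee\bigr)\;=\;\dim\Hom_{N_2}(\pi,\psi_E),
\]
but this is all that is needed: once one space in the chain is finite dimensional, the chain of inclusions gives $\dim$ of the leftmost $\le$ $\dim$ of the rightmost, and the displayed inequality gives the reverse, so all dimensions coincide and (1), (3) are isomorphisms. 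In particular the compatibility check you flag (whether the composite of (1)--(4) agrees with a canonical identification) is unnecessary; the paper closes the loop purely by comparing dimensions, and you should too.
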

\begin{proof}
This is analogous to Lemma 6.4 in \cite{gan2021howe}. First, (1) follows from $\Theta(\pi) \twoheadrightarrow \tau$. Next, (2) follows from
\[
 \Hom_{N_2}(\Theta(\tau), \psi_E)  \cong \Hom_{G'}(\Pi_{N_2,\psi_E}, \tau)
\]
combined with Lemma \ref{lem_GrS} and Frobenius reciprocity. The inclusion (3) is a consequence of $\Theta(\pi^\vee) \twoheadrightarrow \tau^\vee$; this follows from the fact that $\Theta(\overline{\pi})$ is the complex conjugate of $\Theta(\pi)$, combined with $\overline{\pi} \cong \pi^\vee$ and $\overline{\tau} \cong \tau^\vee$. Finally, (4) is the see-saw identity.

Now if any one of the above spaces is finite-dimensional, it follows that $\Hom_{N_2}(\pi, \psi_E)$ is finite-dimensional as well. By Proposition \ref{prop_deg_ps_Spin}, we then have
\[
\dim \Hom_{G_2}(\bigoplus_C R_C(E), \pi^\vee) \leq \dim \Hom_{G_2}(I_E(1/2,\omega_{K/F}), \pi^\vee) = \dim \Hom_{N_2}(\pi, \psi_E).
\]
The result follows.
\end{proof}

The following result will allow us to use the above lemma when analyzing non-generic representations:

\begin{lem}
\label{lem_etale}
Let $\pi$ be an irreducible non-generic infinite-dimensional representation of $G_2$. Then there exists an \'{e}tale cubic algebra $E$ such that $\pi_{{N_2},\psi_E}$ is non-zero. Moreover, $\pi_{{N_2},\psi_E}$ is finite-dimensional for any $E$.
\end{lem}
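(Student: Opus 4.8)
The plan is to recognize $\pi_{N_2,\psi_E}$ as a degenerate Whittaker (generalized Gelfand--Graev) model of $\pi$ attached to a rational form of the subregular nilpotent orbit $G_2(a_1)$, and then to control these models by wavefront set considerations, in the spirit of \cite{gan2021howe}. Here $N_2$ is the unipotent radical of the Heisenberg maximal parabolic $Q_2$ of $G_2$, its Levi $L_2\cong\GL_2$ acting on $N_2/Z(N_2)\cong\mathfrak u_2(1)$ --- the symmetric cube of the standard representation, that is, binary cubic forms up to a determinant twist --- and $\psi_E$ is the character of $N_2$ that is trivial on the center $Z(N_2)$ and is inflated from the character of $N_2/Z(N_2)$ which pairs with an element $f\in\mathfrak u_2(1)^*$ of nonzero discriminant whose $L_2(F)$-orbit corresponds to the \'etale cubic $F$-algebra $E$. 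The $L_2$-grading of $\mathrm{Lie}(G_2)$ defined by $Q_2$ is the good even grading adapted to $G_2(a_1)$, so for such $\psi_E$ the twisted Jacquet module $\pi_{N_2,\psi_E}$ is exactly the Gelfand--Graev module of $\pi$ along the corresponding rational nilpotent element.

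The ``moreover'' clause is the easier half and holds for every $E$ and every admissible $\pi$. A binary cubic form of nonzero discriminant lies in the dense $\GL_2$-orbit on $\mathrm{Sym}^3$, which has dimension $4=\dim\GL_2$, so its stabilizer $\mathrm{Stab}_{L_2}(\psi_E)$ is $0$-dimensional, hence finite (one checks it is a form of $\mathrm{Aut}(E)$). By the formalism of \cite{moeglin1987modeles}, the twisted Jacquet module of the admissible representation $\pi$ is admissible as a representation of $\mathrm{Stab}_{L_2}(\psi_E)$; as this group is finite, $\pi_{N_2,\psi_E}$ is finite-dimensional.

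For the existence of an $E$ with $\pi_{N_2,\psi_E}\ne0$ I would show that $G_2(a_1)$ lies in the wavefront set of $\pi$. It suffices to treat tempered $\pi$, the case used in \S\ref{sec_Howe}. If $\pi$ is supercuspidal, then by \cite{moeglin1987modeles} $\mathrm{WF}(\pi)$ is the closure of a distinguished nilpotent orbit; the only distinguished orbits of $G_2$ are the regular orbit and $G_2(a_1)$, and since $\pi$ is non-generic the regular orbit is excluded, so $\mathrm{WF}(\pi)=\overline{G_2(a_1)}$. If $\pi$ is tempered but not supercuspidal, then $\pi$ is a non-generic constituent of a representation parabolically induced from a discrete series of a $\GL_2$-Levi of $G_2$; an analysis of the Jacquet modules along $Q_1$ and $Q_2$, or the known structure of the tempered spectrum of $G_2$ (see \cite{GGJ}), shows that $\mathrm{WF}(\pi)=\overline{G_2(a_1)}$ in this case as well, the orbits $\widetilde A_1$ and $A_1$ being ruled out (the first because it is not special, the second because the representations of $G_2$ with minimal wavefront set are not tempered). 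In either case $G_2(a_1)$ is a maximal orbit of $\mathrm{WF}(\pi)$, so by \cite{moeglin1987modeles} there is an $L_2(F)$-orbit of generic characters of $N_2/Z(N_2)$ --- equivalently, a rational point of $G_2(a_1)$ in $\mathfrak u_2(1)^*$ --- for which the associated Gelfand--Graev model of $\pi$ is nonzero. Since these orbits are precisely the ones parametrized by the \'etale cubic $F$-algebras, we get an $E$ with $\pi_{N_2,\psi_E}\ne0$.

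The main obstacle is the determination of $\mathrm{WF}(\pi)$ in the existence step, and especially the exclusion of the smaller orbits for the tempered non-supercuspidal representations, which requires either a careful study of the reducibility and Jacquet modules of representations of $G_2$ induced from discrete series of its $\GL_2$-Levis, or the explicit classification of tempered $L$-packets of $G_2$. The remaining ingredients --- the identification of the $Q_2$-grading as a good even grading for $G_2(a_1)$, and the bijection between $L_2(F)$-orbits of nondegenerate binary cubic forms and \'etale cubic $F$-algebras --- are standard.
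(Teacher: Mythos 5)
Your overall skeleton --- viewing $\pi_{N_2,\psi_E}$ as the generalized Whittaker module attached to a rational form of the subregular orbit $G_2(a_1)$, for which $Q_2$ furnishes the even good grading, and then appealing to M\oe glin--Waldspurger --- is the right frame (the paper itself offers no argument: it simply cites Lemma 3.4 of \cite{gan2021howe}, which is proved along these lines). But two steps of your sketch are genuinely unsupported. First, the finiteness claim. The assertion that ``by the formalism of \cite{moeglin1987modeles} the twisted Jacquet module is admissible as a representation of the finite stabilizer'' is not part of that formalism and is not a known general theorem: admissibility of coinvariants is Jacquet's theorem for \emph{untwisted} Jacquet functors, while the M\oe glin--Waldspurger finiteness statement applies only to degenerate Whittaker modules attached to orbits that are \emph{maximal} in the wavefront set. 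In particular your stronger claim that finiteness holds for every admissible $\pi$, with the non-generic hypothesis discarded, is exactly what the theory does not give (and is why the lemma, here and in \cite{gan2021howe}, carries the non-generic hypothesis). The correct route uses the hypothesis you dropped: since $\pi$ is non-generic, the regular orbit does not occur in $\mathrm{WF}(\pi)$, so $G_2(a_1)$ is either maximal in $\mathrm{WF}(\pi)$ (then M\oe glin--Waldspurger gives $\dim\pi_{N_2,\psi_E}<\infty$, equal to the relevant coefficient) or lies outside $\mathrm{WF}(\pi)$ (then the module vanishes); either way it is finite-dimensional for every $E$.

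Second, the existence step is where the real content lies, and your sketch does not carry it out. You restrict to tempered $\pi$, whereas the lemma is asserted for every irreducible non-generic infinite-dimensional representation; and even within the tempered case the inputs you invoke are not citable facts: ``supercuspidal $\Rightarrow$ $\mathrm{WF}(\pi)$ is the closure of a distinguished orbit'' is not a theorem of \cite{moeglin1987modeles}; the exclusions of $\widetilde A_1$ and $A_1$ via ``$\widetilde A_1$ is not special,'' ``tempered representations have special wavefront sets,'' and ``representations with minimal wavefront set are not tempered'' are themselves unproved assertions (and the interaction between temperedness and specialness of $p$-adic wavefront sets is notoriously delicate), while for tempered non-supercuspidal $\pi$ you simply assert that a Jacquet-module analysis yields $\mathrm{WF}(\pi)=\overline{G_2(a_1)}$. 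What actually has to be shown is that a non-generic, infinite-dimensional irreducible $\pi$ cannot have all $\psi_E$-coinvariants zero, i.e.\ cannot have wavefront set contained in $\overline{\widetilde A_1}$ (the locus of degenerate binary cubics); that exclusion of the small orbits is precisely the substance of Lemma 3.4 of \cite{gan2021howe}, and it is the piece missing from your argument. The peripheral identifications you call standard (the $Q_2$-grading as the even good grading for $G_2(a_1)$, nondegenerate binary cubics up to $L_2(F)$ corresponding to \'etale cubic algebras, finiteness of the stabilizer as a form of $\mathrm{Aut}(E)$) are indeed correct.
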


\begin{proof}
This is Lemma 3.4 in \cite{gan2021howe}
\end{proof}

The two games of period ping-pong allow us to conclude the proof of Theorem \ref{thm_Howe}. First, we prove the finiteness of $\Theta(\pi)_{\text{c}}$ (cf.~Proposition 6.7 in \cite{gan2021howe}).

\begin{prop}
\label{prop_no2tempered_nongen}
Let $\pi \in \Irr(G)$ be tempered and non-generic. Then $\Theta(\pi)$ cannot have two tempered irreducible quotients. In particular, $\Theta(\pi)_{\text{c}}$ is irreducible or $0$. 
\end{prop}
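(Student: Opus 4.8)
The plan is to mimic the argument of Proposition \ref{prop_no2tempered_gen}, but using the second game of period ping-pong (Lemma \ref{lem_pp2}) in place of the first. Suppose for contradiction that $\tau_1$ and $\tau_2$ are two inequivalent irreducible tempered quotients of $\Theta(\pi)$. Since $\pi$ is non-generic and infinite-dimensional (we may assume the latter, as a one-dimensional $\pi$ has at most one tempered quotient for trivial reasons), Lemma \ref{lem_etale} furnishes an \'etale cubic algebra $E$ with $\pi_{N_2,\psi_E}\neq 0$, and moreover $\dim \pi_{N_2,\psi_E}<\infty$. Note that in our setting $E$ is necessarily $E = K\oplus F$, so that the relevant $G_E$ is precisely the $\Spin_8^E$ appearing in \S\ref{subs_groups} and the see-saw of Lemma \ref{lem_pp2} is available.

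First I would apply Lemma \ref{lem_pp2} to the pair $(\pi^\vee,\tau_i^\vee)$ for $i=1,2$. Since $\pi$ has a tempered quotient $\tau_i$ of $\Theta(\pi)$, dualizing gives $\Theta(\pi^\vee)\twoheadrightarrow \tau_i^\vee$, hence $\Hom_{G_2\times G'}(\Pi,\pi^\vee\boxtimes\tau_i^\vee)\neq 0$ and the hypothesis of the lemma is met. The space $\Hom_{N_2}(\pi^\vee,\psi_E)$ is finite-dimensional by Lemma \ref{lem_etale} (applied to $\pi^\vee$, which is again irreducible, tempered and non-generic), so Lemma \ref{lem_pp2} tells us that inclusions (1) and (3) are isomorphisms. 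In particular, reading the chain of isomorphisms,
\[
\bigoplus_C \Hom_{G_{E,C}'}(\tau_i,1) \;\cong\; \Hom_{N_2}(\pi^\vee,\psi_E)
\]
for each $i$ (here I am using that complex conjugation identifies $\tau_i^\vee$ with $\overline{\tau_i}$ and that $\Hom$ into the trivial representation is unaffected up to taking conjugates of the dimension count). The key point is that the \emph{same} finite-dimensional space $\Hom_{N_2}(\pi^\vee,\psi_E)$ receives, via (1), the contribution of $\Hom_{N_2}(\Theta(\pi^\vee),\psi_E)$, and the latter must contain the sum of the contributions of $\tau_1^\vee$ and $\tau_2^\vee$ since $\tau_1^\vee\oplus\tau_2^\vee$ is a quotient of $\Theta(\pi^\vee)$.

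Thus I would argue: on the one hand $\dim\Hom_{N_2}(\pi^\vee,\psi_E) = \dim\Hom_{N_2}(\Theta(\pi^\vee),\psi_E)$ by the fact that (1) is an isomorphism; on the other, the quotient map $\Theta(\pi^\vee)\twoheadrightarrow \tau_1^\vee\oplus\tau_2^\vee$ gives
\[
\dim\Hom_{N_2}(\Theta(\pi^\vee),\psi_E) \;\geq\; \dim\Hom_{N_2}(\tau_1^\vee,\psi_E) + \dim\Hom_{N_2}(\tau_2^\vee,\psi_E),
\]
and each summand on the right equals $\dim\Hom_{N_2}(\pi^\vee,\psi_E)$ by applying the full chain of Lemma \ref{lem_pp2} to $(\pi^\vee,\tau_i^\vee)$ — more precisely, the isomorphisms (2)–(4) identify $\bigoplus_C\Hom_{G_{E,C}'}(\tau_i,1)$ with $\Hom_{G_2}(\bigoplus_C R_C(E),\pi^\vee)$, which by Proposition \ref{prop_deg_ps_Spin}(ii) has dimension $\dim\Hom_{N_2}(\pi,\psi_E)$, and by applying this to $(\pi,\tau_i)$ as well we get that each $\tau_i$ contributes a full copy. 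Since $\dim\Hom_{N_2}(\pi^\vee,\psi_E)\geq 1$ (it is nonzero by choice of $E$), we obtain $d \geq 2d$ with $d = \dim\Hom_{N_2}(\pi^\vee,\psi_E)\geq 1$, a contradiction. Hence $\Theta(\pi)$ has at most one tempered irreducible quotient; combined with the decomposition $\Theta(\pi) = \Theta(\pi)_{\mathrm{nc}}\oplus\Theta(\pi)_{\mathrm{c}}$ and the fact that any irreducible quotient of the cuspidal part $\Theta(\pi)_c$ is cuspidal hence tempered, it follows that $\Theta(\pi)_c$ is irreducible or $0$.

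The main obstacle I anticipate is bookkeeping the dimension counts carefully through the two separate applications of Lemma \ref{lem_pp2} — once to control $\Hom_{N_2}(\Theta(\pi^\vee),\psi_E)$ from above by $\dim\Hom_{N_2}(\pi^\vee,\psi_E)$, and once (really, for each $\tau_i$) to see that each tempered quotient already saturates this dimension — and making sure that the "$\bigoplus_C$" over composition algebras $C$ with $E\oplus C = J$ is handled consistently on both sides (it is the same finite index set throughout, so it cancels). One should also double check the harmless reductions: that $\pi$ infinite-dimensional may be assumed, and that passing between $\pi,\tau_i$ and their contragredients does not disturb any of the dimension equalities, which is immediate since all the relevant $\Hom$-spaces are interchanged with their complex conjugates under $\pi\mapsto\pi^\vee\cong\overline\pi$.
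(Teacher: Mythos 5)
Your proposal is correct and follows essentially the same route as the paper: pick $E$ via Lemma \ref{lem_etale}, run the second ping-pong (Lemma \ref{lem_pp2}) for the pairs $(\pi^\vee,\tau_i^\vee)$ with the finiteness hypothesis turning (1) and (3) into isomorphisms, and derive the contradiction $d\geq 2d$. The only slip is notational: the counting should take place in $\bigoplus_C\Hom_{G_{E,C}'}(\Theta(\pi),1)$ (as your own chain of isomorphisms indicates), not in $\Hom_{N_2}(\Theta(\pi^\vee),\psi_E)$ or $\Hom_{N_2}(\tau_i^\vee,\psi_E)$, which as written apply the $(N_2,\psi_E)$-period to $G'$-modules; with that relabeling your argument is exactly the paper's proof.
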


\begin{proof}
Let $\tau_1, \tau_2 \in \Irr(G')$ be irreducible and tempered; assume that $\Theta(\pi) \twoheadrightarrow \tau_1 \oplus \tau_2$. Since $\pi$ is non-generic, there is an \'{e}tale cubic algebra $E$ such that $d \defeq \dim \Hom_{N_2}(\pi^\vee,\psi_E)$ is finite-dimensional and non-zero. Lemma \ref{lem_pp2} (applied first to $\pi^\vee, \tau_1^\vee$, and then to $\pi^\vee$, $\tau_2^\vee$) now shows
\[
d = \dim  \bigoplus_C\Hom_{G_{E,C}'}(\tau_1, 1) =\dim  \bigoplus_C\Hom_{G_{E,C}'}(\Theta(\pi), 1) =  \dim  \bigoplus_C\Hom_{G_{E,C}'}(\tau_2, 1).
\]
However, this is impossible, since it would imply 
\[
d = \dim  \bigoplus_C\Hom_{G_{E,C}'}(\Theta(\pi), 1) \geq \dim  \bigoplus_C\Hom_{G_{E,C}'}(\tau_1, 1) +  \dim  \bigoplus_C\Hom_{G_{E,C}'}(\tau_2, 1) = 2d.
\]
Therefore, we have arrived at a contradiction, and the Proposition is proved.
\end{proof}

We will also need the following result:

\begin{prop}
\label{prop_nongeneric_quotient_implies_unique}
Let $\tau \in \Irr(G')$ be tempered. Let $\pi \in \Irr(G)$ be a tempered, non-generic quotient of $\Theta(\tau)$. Then $\pi$ is the unique 	irreducible tempered quotient of $\Theta(\tau)$.
\end{prop}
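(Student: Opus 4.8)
The plan is to argue by contradiction, using the second period ping‑pong (Lemma~\ref{lem_pp2}) fed by the étale‑cubic input of Lemma~\ref{lem_etale}. Suppose $\pi' \in \Irr(G)$ is a tempered quotient of $\Theta(\tau)$ with $\pi' \not\cong \pi$. Since $\pi \not\cong \pi'$, the two quotient maps combine to a surjection $\Theta(\tau) \twoheadrightarrow \pi \oplus \pi'$; I will show this is impossible. Note that $\pi$ (being tempered, hence infinite‑dimensional, as $G_2$ admits no non‑trivial finite‑dimensional tempered representation) is non‑generic, and the same is true of $\pi^\vee$; also $\pi'$ is tempered.

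First I would invoke Lemma~\ref{lem_etale} for $\pi^\vee$ to obtain an étale cubic $F$‑algebra $E$ with $0\neq\Hom_{N_2}(\pi^\vee,\psi_E)$ finite‑dimensional. Next I would run Lemma~\ref{lem_pp2} for the pair $(\pi^\vee,\tau^\vee)$: this is legitimate because $\Theta(\tau)\twoheadrightarrow\pi$ gives $\Hom_{G\times G'}(\Pi,\pi\boxtimes\tau)\neq 0$, hence $\Hom_{G\times G'}(\Pi,\pi^\vee\boxtimes\tau^\vee)\neq 0$ after complex conjugation (using $\overline{\pi}\cong\pi^\vee$, $\overline{\tau}\cong\tau^\vee$ by temperedness, and $\overline{\Pi}\cong\Pi$). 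Since $\Hom_{N_2}(\pi^\vee,\psi_E)$ is finite‑dimensional, inclusion~(1) of Lemma~\ref{lem_pp2} is an isomorphism, so
\[
0 \neq \Hom_{N_2}(\pi^\vee,\psi_E) \;\cong\; \Hom_{N_2}(\Theta(\tau^\vee),\psi_E),
\]
and in particular $d:=\dim\Hom_{N_2}(\Theta(\tau^\vee),\psi_E)$ is finite and positive.

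Now I would exploit the surjection $\Theta(\tau^\vee)=\overline{\Theta(\tau)}\twoheadrightarrow\overline{\pi}\oplus\overline{\pi'}\cong\pi^\vee\oplus\pi'^\vee$: applying the left‑exact contravariant functor $\Hom_{N_2}(-,\psi_E)$ gives an embedding $\Hom_{N_2}(\pi^\vee,\psi_E)\oplus\Hom_{N_2}(\pi'^\vee,\psi_E)\hookrightarrow\Hom_{N_2}(\Theta(\tau^\vee),\psi_E)$, and comparing dimensions with the displayed isomorphism forces $\Hom_{N_2}(\pi'^\vee,\psi_E)=0$. Finally I would apply Lemma~\ref{lem_pp2} a second time, to the pair $(\pi'^\vee,\tau^\vee)$ (again $\Hom_{G\times G'}(\Pi,\pi'^\vee\boxtimes\tau^\vee)\neq 0$, since $\Theta(\tau)\twoheadrightarrow\pi'$): the term $\Hom_{N_2}(\Theta(\tau^\vee),\psi_E)$ in that chain is literally the same $d$‑dimensional space, so inclusion~(1) is again an isomorphism and $\Hom_{N_2}(\pi'^\vee,\psi_E)\cong\Hom_{N_2}(\Theta(\tau^\vee),\psi_E)$ has dimension $d>0$, contradicting $\Hom_{N_2}(\pi'^\vee,\psi_E)=0$. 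Hence $\pi$ is the unique irreducible tempered quotient of $\Theta(\tau)$.

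The delicate point — and the one I would be most careful about — is the bookkeeping that makes \emph{one} finite‑dimensional space $\Hom_{N_2}(\Theta(\tau^\vee),\psi_E)$ govern the ping‑pong for \emph{both} $\pi$ and $\pi'$: it is $\pi$, via Lemma~\ref{lem_etale}, that certifies the finiteness, and this finiteness is exactly what licenses the second, contradictory application of Lemma~\ref{lem_pp2} to $\pi'$. In particular no separate treatment of generic versus non‑generic $\pi'$ is required. The only remaining routine checks are the standard conventions relating $\Hom_{N_2}(-,\psi_E)$ to twisted Jacquet modules (exactness of the Jacquet functor, and the harmless choice of additive character implicit in $\psi_E$), and the fact that $\Theta(\overline{\tau})$ is the complex conjugate of $\Theta(\tau)$, already recorded in the introduction.
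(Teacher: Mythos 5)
Your proposal is correct and is essentially the paper's own argument: both rest on Lemma \ref{lem_etale} to produce a cubic algebra $E$ with $0<\dim\Hom_{N_2}(\pi,\psi_E)<\infty$ and then play Lemma \ref{lem_pp2} twice (for $\pi$ and for the hypothetical second quotient $\pi'$) against the additivity of the twisted Jacquet functor on $\Theta(\tau)\twoheadrightarrow\pi\oplus\pi'$ to reach a dimension-count contradiction. The only differences are cosmetic: you run the chain through the contragredients $\pi^\vee,\tau^\vee$ and first force $\Hom_{N_2}(\pi'^\vee,\psi_E)=0$ before contradicting it, whereas the paper works with $(\pi,\tau)$, $(\pi',\tau)$ directly and concludes from $d\geq 2d$.
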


\begin{proof}
Since $\pi$ is non-generic, Lemma \ref{lem_etale} shows that there is a cubic algebra $E$ such that $d \defeq \dim \Hom_{N_2}(\pi,\psi_E)$ is finite-dimensional and non-zero. Now Lemma \ref{lem_pp2} shows that $\dim \Hom_{N_2}(\Theta(\tau),\psi_E) = d$. If $\pi'$ is another tempered irreducible quotient of $\Theta(\pi)$, then Lemma \ref{lem_pp2} shows that $\dim \Hom_{N_2}(\pi',\psi_E) = d$. But this implies
\[
d = \dim \Hom_{N_2}(\Theta(\tau),\psi_E) \geq \dim \Hom_{N_2}(\pi,\psi_E) + \dim \Hom_{N_2}(\pi',\psi_E) = 2d,
\]
which is a contradiction. The Proposition is proved.
\end{proof}

Next, we prove that $\Theta(\pi)$, if non-zero, has a unique irreducible quotient.

\begin{prop}
\label{prop_unique_quotient}
Let $\pi \in \Irr(G)$ be tempered such that $\Theta(\pi) \neq 0$. Then $\Theta(\pi)$ has a unique irreducible quotient.
\end{prop}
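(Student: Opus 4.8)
The plan is to read the result off from the structure theory already in place. Recall the decomposition $\Theta(\pi) = \Theta(\pi)_{\mathrm{nc}} \oplus \Theta(\pi)_{\mathrm{c}}$ into non-cuspidal and cuspidal parts. First I would note that $\Theta(\pi)$ has finite length: by Proposition \ref{prop_nc_finite}(ii) the summand $\Theta(\pi)_{\mathrm{nc}}$ has finite length (this applies because $\pi$ is tempered), while Proposition \ref{prop_no2tempered_gen} (if $\pi$ is generic) or Proposition \ref{prop_no2tempered_nongen} (if $\pi$ is non-generic) shows that $\Theta(\pi)_{\mathrm{c}}$ is irreducible or zero. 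Since by hypothesis $\Theta(\pi) \neq 0$, a nonzero finite-length smooth representation is Noetherian and hence has a maximal proper subrepresentation, so it admits at least one irreducible quotient.

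Next I would show there is at most one. If $\tau$ is any irreducible quotient of $\Theta(\pi)$, then $\pi \boxtimes \tau$ is a quotient of $\Pi$, so Remark \ref{rem_temp2temp} forces $\tau$ to be tempered, as $\pi$ is. Thus every irreducible quotient of $\Theta(\pi)$ is tempered, and Proposition \ref{prop_no2tempered_gen} (resp.\ Proposition \ref{prop_no2tempered_nongen}) in the generic (resp.\ non-generic) case asserts precisely that $\Theta(\pi)$ cannot have two tempered irreducible quotients. Combining this with the existence obtained in the previous paragraph yields exactly one irreducible quotient, which we call $\theta(\pi)$.

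There is no real obstacle here: the genuinely hard input\,---\,the ``no two tempered quotients'' statement, resting on the two period ping-pong games of Lemmas \ref{lem_ppp1} and \ref{lem_pp2}\,---\,has already been established. The only point requiring a moment's care, and the natural place a gap could hide, is that the ping-pong bounds control \emph{tempered} quotients, so one must first invoke Remark \ref{rem_temp2temp} to know that all irreducible quotients of $\Theta(\pi)$ are tempered; once that is in hand the argument is pure bookkeeping. In fact the same reasoning yields the sharper statement that $\Theta(\pi)$ is either cuspidal and irreducible or entirely non-cuspidal, since a nonzero finite-length $\Theta(\pi)_{\mathrm{nc}}$ together with a nonzero $\Theta(\pi)_{\mathrm{c}}$ would contribute two tempered irreducible quotients.
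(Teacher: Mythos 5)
Your argument is correct and follows essentially the same route as the paper: Remark \ref{rem_temp2temp} reduces to tempered quotients, and Propositions \ref{prop_no2tempered_gen} and \ref{prop_no2tempered_nongen} rule out two such quotients. The only addition is your explicit existence step via finite length, which the paper leaves implicit.
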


\begin{proof}
Remark \ref{rem_temp2temp} shows that $\Theta(\pi)$ can only have tempered quotients. The result now follows from Proposition \ref{prop_no2tempered_gen} (when $\pi$ is generic) and Proposition \ref{prop_no2tempered_nongen} (when $\pi$ is non-generic).
\end{proof}

\begin{prop}
\label{prop_injectivity_tempered}
Let $\pi_1, \pi_2 \in \Irr(G)$ be tempered. Then $
0 \neq \theta(\pi_1) \cong \theta(\pi_2)$ implies $\pi_1 \cong \pi_2$.
\end{prop}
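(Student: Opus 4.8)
The plan is to run the period ping-pong in the "reverse" direction, exploiting the fact that a theta lift $\Theta(\pi)$ determines a Whittaker-type period on the $G'$ side which in turn recovers $\pi$. Concretely, suppose $\theta(\pi_1)\cong\theta(\pi_2)=:\tau$. By Remark~\ref{rem_temp2temp}, $\tau$ is tempered. Since $\tau$ is a quotient of $\Theta(\pi_i)$, we have $\Hom_{G\times G'}(\Pi,\pi_i\boxtimes\tau)\neq 0$ for $i=1,2$, so Lemma~\ref{lem_ppp1} applies to both pairs $(\pi_1,\tau)$ and $(\pi_2,\tau)$. First I would treat the generic case: if (say) $\pi_1$ is generic, then by Corollary~\ref{cor_genericTheta} the lift $\Theta(\pi_1)$ — hence its quotient $\tau$ — has a non-trivial Shalika period, and reading Lemma~\ref{lem_ppp1} for $(\pi_2,\tau)$ shows $\Hom_V(\pi_2,\psi_V)\cong\Hom_S(\tau^\vee,\overline\psi_S)\neq 0$, so $\pi_2$ is generic as well. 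Thus genericity is a property shared by $\pi_1$ and $\pi_2$, and the argument splits cleanly into a generic case and a non-generic case.

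In the generic case, I would apply Lemma~\ref{lem_ppp1} to $(\pi_1^\vee,\tau^\vee)$ and separately to $(\pi_2^\vee,\tau^\vee)$: since $\Theta(\pi_i)\twoheadrightarrow\tau$ gives $\Theta(\pi_i^\vee)\twoheadrightarrow\tau^\vee$ (using $\Theta(\overline\pi)=\overline{\Theta(\pi)}$ and $\overline\pi\cong\pi^\vee$), the chain in Lemma~\ref{lem_ppp1} shows that the one-dimensional space $\Hom_S(\Theta(\pi_i^\vee),\overline\psi_S)\cong\Hom_G(\Pi_{S,\overline\psi_S},\pi_i^\vee)$ matches $\Hom_S(\tau,\overline\psi_S)$, which is the same space for $i=1$ and $i=2$. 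The point is that $\Pi_{S,\overline\psi_S}$ is a fixed $G$-module (namely $\cind_{Q_1^{\mathrm{der}}}^G(\rho_\psi\otimes\Theta(1))$ from the Shalika-period computation), and by \eqref{eq_Shalika} it has at most one-dimensional Hom-space to any tempered generic irreducible representation of $G$; so the natural surjections $\Pi_{S,\overline\psi_S}\twoheadrightarrow\pi_i^\vee$ for $i=1,2$ — which exist because the corresponding Hom-spaces are non-zero by the ping-pong chain — together with uniqueness of irreducible quotients force $\pi_1^\vee\cong\pi_2^\vee$, i.e.\ $\pi_1\cong\pi_2$. (Here I would use that $\Pi_{S,\overline\psi_S}$, being compactly induced from $Q_1^{\mathrm{der}}$ of an irreducible times an explicit principal-series-quotient Weil representation, has the property that its tempered irreducible quotients are determined by the Hom-space; the key input is precisely \eqref{eq_Shalika}.)

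In the non-generic case I would run the same argument with Lemma~\ref{lem_pp2} in place of Lemma~\ref{lem_ppp1}. Since $\pi_1$ (hence $\pi_2$) is non-generic and infinite-dimensional — the finite-dimensional case being trivial since then $\Theta$ of a finite-dimensional representation is handled separately or ruled out — Lemma~\ref{lem_etale} gives an \'etale cubic algebra $E$ with $d:=\dim\Hom_{N_2}(\pi_1^\vee,\psi_E)$ finite and non-zero. Applying Lemma~\ref{lem_pp2} to $(\pi_1^\vee,\tau^\vee)$ and to $(\pi_2^\vee,\tau^\vee)$ yields $\dim\Hom_{N_2}(\pi_i^\vee,\psi_E)=\dim\bigoplus_C\Hom_{G'_{E,C}}(\tau,1)$ for both $i$, and via Proposition~\ref{prop_deg_ps_Spin} this common number equals $\dim\Hom_{G_2}(\bigoplus_C R_C(E),\pi_i^\vee)$; since $\bigoplus_C R_C(E)$ is a quotient of the fixed degenerate principal series $I_{E,\omega_{K/F}}(1/2)$ and $\dim\Hom_{G_2}(I_{E,\omega_{K/F}}(1/2),\pi_i^\vee)=\dim\Hom_{N_2}(\pi_i^\vee,\psi_E)=d$ by Proposition~\ref{prop_deg_ps_Spin}(ii), the inclusions in Lemma~\ref{lem_pp2} are all isomorphisms, and $\bigoplus_C R_C(E)$ surjects onto both $\pi_1^\vee$ and $\pi_2^\vee$. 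To conclude $\pi_1\cong\pi_2$ I would invoke uniqueness of the irreducible (tempered) quotient of $\Theta(\tau^\vee)$, which is exactly the content of Proposition~\ref{prop_nongeneric_quotient_implies_unique}: both $\pi_1^\vee$ and $\pi_2^\vee$ are tempered quotients of $\Theta(\tau^\vee)$ and at least one of them is non-generic, so they coincide.

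The main obstacle I anticipate is the bookkeeping at the end of the non-generic case: one needs to be careful that the hypothesis "$\theta(\pi_1)\cong\theta(\pi_2)$" really gives surjections $\Theta(\tau^\vee)\twoheadrightarrow\pi_i^\vee$ (so that Proposition~\ref{prop_nongeneric_quotient_implies_unique} can be applied), which requires knowing that $\tau$ being a quotient of $\Theta(\pi_i)$ implies $\pi_i^\vee$ is a quotient of $\Theta(\tau^\vee)$ — this is the standard "contragredient" step, available here because $\tau$ and $\pi_i$ are tempered so that $\overline{(\cdot)}\cong(\cdot)^\vee$ and $\overline\Pi\cong\Pi$. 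Once that symmetry is in hand the uniqueness results from Propositions~\ref{prop_no2tempered_gen}, \ref{prop_no2tempered_nongen}, and \ref{prop_nongeneric_quotient_implies_unique} do all the real work, and the proof is a short assembly of these pieces together with the two ping-pong lemmas.
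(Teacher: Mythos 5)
Your non-generic case is fine, but it is doing more work than needed: once you know that $\pi_1$ and $\pi_2$ are both (tempered) quotients of $\Theta(\tau)$ for $\tau=\theta(\pi_1)\cong\theta(\pi_2)$, Proposition \ref{prop_nongeneric_quotient_implies_unique} applies verbatim whenever at least one of them is non-generic, and the Lemma \ref{lem_pp2} computation you re-run is just a re-proof of that proposition. (This also makes your genericity-transfer step unnecessary, which is good, because as written it is not justified: Lemma \ref{lem_ppp1} only gives the inclusion $\Hom_V(\pi_2,\psi_V)\subseteq\Hom_S(\tau^\vee,\overline{\psi}_S)$, not the isomorphism you assert, so you cannot conclude from it that $\pi_2$ is generic. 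The right case split is ``both generic'' versus ``at least one non-generic.'')

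The genuine gap is in your generic case. From the non-vanishing of $\Hom_G(\Pi_{S,\overline{\psi}_S},\pi_i^\vee)$ for $i=1,2$ you invoke ``uniqueness of irreducible quotients'' of $\Pi_{S,\overline{\psi}_S}=\cind_{Q_1^{\mathrm{der}}}^{G}(\rho_\psi\otimes\Theta(1))$ to force $\pi_1^\vee\cong\pi_2^\vee$, citing \eqref{eq_Shalika} as the key input. But \eqref{eq_Shalika} is a multiplicity-one statement for each \emph{fixed} generic tempered $\pi$; it says nothing about $\Pi_{S,\overline{\psi}_S}$ having a unique irreducible tempered generic quotient, and indeed it does not: every generic tempered representation with non-vanishing period of this type occurs as a quotient, and the whole point of the proposition you are proving is to show that the $\pi$ attached to a given $\tau$ is unique, so you cannot assume the Hom-space pins $\pi$ down. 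The correct mechanism (the paper's Proposition \ref{prop_no2_gen_quotients}) counts at the other end of the ping-pong chain: both $\pi_1$ and $\pi_2$ are quotients of $\Theta(\tau)$, so if $\pi_1\not\cong\pi_2$ then $\Theta(\tau)\twoheadrightarrow\pi_1\oplus\pi_2$ and, both being generic, $\dim\Hom_V(\Theta(\tau),\psi_V)\geq 2$; this contradicts the one-dimensionality of $\Hom_V(\Theta(\tau),\psi_V)$ furnished by Lemma \ref{lem_ppp1}. With that replacement (and Proposition \ref{prop_nongeneric_quotient_implies_unique} for the remaining case) your assembly matches the paper's proof.
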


\begin{proof}
If $\pi_1$ and $\pi_2$ are both generic, this follows from Proposition \ref{prop_no2_gen_quotients} applied to $\tau = \theta(\pi_1) \cong \theta(\pi_2)$. If either is non-generic, then the result follows from Proposition \ref{prop_nongeneric_quotient_implies_unique}.
\end{proof}

\section{Explicit theta correspondences}
In this section we discuss lifts of (non-cuspidal) tempered representations of $G'$.

\subsection{Representations of the unitary group}  

Recall that $I(\chi, s)$ denotes the principal series representation of $\PU_3(K)$ obtained by inducing $\chi\cdot|N_{K/F}|^s$ (with $\chi$ unitary) from $T' \cong K^\times$. We shall denote $I(\chi, 0)$ simply by $I(\chi)$. The unique non-trivial element of the Weyl group conjugates $\chi^{-1}$ to $\chi^{\sigma}$, where $\sigma$ is the non-trivial element of $\mathrm{Gal}(K/F)$. It is easy to argue that the principal series representations $I(\chi, s)$ have reducibility points only if $\chi^{-1}=\chi^{\sigma}$, that is, $\chi$ is conjugate-dual. Then there is a dichotomy at play. If $I(\chi)$ is reducible, then $I(\chi, s)$ is irreducible for all $s\neq 0$. If $I(\chi)$ is irreducible then there exists $s_0>0$ such that $I(\chi, s)$ are irreducible for $s\neq \pm s_0$. The representation $I(\chi, s_0)$ is a standard module of length two, 
with a unique irreducible quotient and a discrete series representation as a unique submodule. By \cite{Go}  $I(\chi)$ is irreducible if and only if
\[ 
L(\chi, s) L(\mathrm{As}^-(\chi), 2s) 
\] 
has a (simple) pole at $s=0$. More precisely, if $L(\chi, s)$ has a pole at $0$, then $\chi=1$ and $s_0=1$; if $L(\mathrm{As}^-(\chi), 2s)$ has a pole at $0$, 
then $\chi$ is conjugate-symplectic and $s_0=1/2$.  Now the following summarizes our discussion:  

\begin{prop} Let $\chi$ be a conjugate-dual character of $K^{\times}$. Then the principal series $I(\chi,s)$, for $s\geq 0$, reduces as follows: 
\begin{enumerate} 
\item If $\chi=1$, the trivial representation is the quotient, and the Steinberg representation $\mathrm{St}$ is a submodule at $s=1$.  
\item If $\chi\neq 1$ is conjugate-orthogonal then reduction occurs at $s=0$, 
\[ 
I(\chi) = I(\chi)_{\mathrm{gen}} \oplus  I(\chi)_{\mathrm{deg}} 
\] 
where $I(\chi)_{\mathrm{gen}} $ is Whittaker generic.   
\item If $\chi$ is conjugate-symplectic then $I(\chi, s)$ reduces at $s=1/2$. The Whittaker generic submodule is a discrete series $\delta(\chi)$ whose Langlands parameter 
\cite[Section 10]{GGP} is a 3-dimensional conjugate-orthogonal  representation 
\[ 
\chi^{-2} \oplus \chi\otimes V_2 
\] 
of $K^{\times} \times \mathrm{SL}_2$, a quotient of the Weil--Deligne group of $K$ by the commutator of $W_K$, 
where $V_2$ is the irreducible two-dimensional representation of $\mathrm{SL}_2$. 
\end{enumerate} 
\end{prop}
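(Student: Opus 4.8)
The plan is to combine the dichotomy for the reducibility of $I(\chi,s)$ with the criterion of \cite{Go} recalled above. Three auxiliary inputs are needed: the location, at $s=0$, of the poles of $L(\chi,s)$ and of $L(\mathrm{As}^-(\chi),2s)$; the value of $\mathrm{As}^-(\chi)$, furnished by the lemma on Asai characters ($\mathrm{As}^-(\chi)=1$ when $\chi$ is conjugate-symplectic, $\mathrm{As}^-(\chi)=\omega_{K/F}$ when $\chi\neq 1$ is conjugate-orthogonal); and uniqueness of the Whittaker model for $\PU_3(K)$, used to single out the generic constituent. With these in place the proof is a walk through the three cases.

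I would first dispose of the two simpler cases. If $\chi=1$, then $L(\chi,s)=\zeta_K(s)$ has a simple pole at $s=0$, so \cite{Go} gives that $I(1)$ is irreducible; the dichotomy then produces a single reducibility point $s_0>0$, and since the pole comes from $L(\chi,s)$ one has $s_0=1$. Thus $I(1,1)=i_{B'}^{G'}(\delta_{B'}^{1/2})$ is a standard module of length two; its Langlands quotient is the trivial representation (as $\Hom_{G'}(i_{B'}^{G'}(\delta_{B'}^{1/2}),\mathbb{1})\neq 0$), and its unique submodule, being square-integrable, is the Steinberg representation $\mathrm{St}$. This gives (1). If instead $\chi\neq 1$ is conjugate-orthogonal, then $\mathrm{As}^-(\chi)=\omega_{K/F}$ is a non-trivial character of $F^\times$, so $L(\mathrm{As}^-(\chi),2s)$ is holomorphic and non-zero at $s=0$, and so is $L(\chi,s)$; by \cite{Go}, $I(\chi)$ is reducible, and by the dichotomy $I(\chi,s)$ is irreducible for $s\neq 0$. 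Since $\chi$ is unitary, $I(\chi)$ is a unitary, hence semisimple, representation; the dichotomy forces its length to be two, and uniqueness of the Whittaker model forces exactly one constituent to be generic, giving $I(\chi)=I(\chi)_{\mathrm{gen}}\oplus I(\chi)_{\mathrm{deg}}$. This gives (2).

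Case (3), $\chi$ conjugate-symplectic, carries the substance. Here $\mathrm{As}^-(\chi)=1$, so $L(\mathrm{As}^-(\chi),2s)=\zeta_F(2s)$ has a simple pole at $s=0$, whereas $L(\chi,s)$ has no pole (note $\chi\neq 1$, since $\chi|_{F^\times}=\omega_{K/F}$). Hence $I(\chi)$ is irreducible, the reducibility point is $s_0=1/2$, and $I(\chi,1/2)$ is a standard module of length two with square-integrable submodule $\delta(\chi)$. As the Langlands quotient of a reducible standard module is non-generic while a principal series carries a unique generic constituent, $\delta(\chi)$ is that Whittaker generic constituent. To compute its parameter I would use that the reducibility of $I(\chi,s)$ at $s_0=1/2$ is the one governed by the pole of the Asai $L$-function $L(\mathrm{As}^-(\chi),2s)$: in the Langlands--Shahidi picture $\delta(\chi)$ is then the generalized Steinberg representation attached to $\chi$ at this pole, whose parameter must contain the segment $\chi\otimes V_2$. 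The complementary summand is a conjugate-dual character of $K^\times$, uniquely determined by the requirement that the total parameter be a parameter of $\PU_3(K)$, that is, a $3$-dimensional conjugate-orthogonal representation of $K^\times\times\mathrm{SL}_2$ with trivial determinant: since $\det(\chi\otimes V_2)=\chi^2$ and $\chi^2$ is conjugate-dual with $\chi^2|_{F^\times}=\omega_{K/F}^2=1$, this complement must be $\chi^{-2}$, which is then conjugate-orthogonal. Hence the parameter of $\delta(\chi)$ is $\chi^{-2}\oplus\chi\otimes V_2$, and one checks directly that it is a discrete (non-Levi-factoring) parameter, consistent with $\delta(\chi)$ being square-integrable. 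This is (3).

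The main obstacle I anticipate is this last identification: establishing rigorously that the parameter of $\delta(\chi)$ is exactly $\chi^{-2}\oplus\chi\otimes V_2$, and that the one-dimensional piece is $\chi^{-2}$ with the correct conjugate-orthogonal structure rather than a twist of it. I would settle it either by the Asai $L$-function bookkeeping above, matched against the pole of $L(\mathrm{As}^-(\chi),2s)$ in the framework of \cite[Section 10]{GGP}\,---\,the conjugate-symplectic case being precisely the one in which $\chi\otimes V_2$ is conjugate-orthogonal, hence an admissible piece of a $\PU_3$-parameter\,---\,or by computing the Jacquet module of $\delta(\chi)$ along the Borel and comparing the resulting cuspidal support with the one dictated by the parameter. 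Everything else reduces to routine bookkeeping with the dichotomy and the criterion of \cite{Go}.
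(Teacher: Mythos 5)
Your proposal is correct and follows essentially the same route as the paper: the proposition is stated there as a summary of the preceding discussion, namely the rank-one dichotomy for $I(\chi,s)$, Goldberg's criterion via the pole of $L(\chi,s)L(\mathrm{As}^-(\chi),2s)$ at $s=0$, and the Asai-character lemma distinguishing the conjugate-orthogonal and conjugate-symplectic cases, exactly as you use them. Your extra bookkeeping identifying the parameter $\chi^{-2}\oplus\chi\otimes V_2$ of $\delta(\chi)$ (determinant one, conjugate-orthogonality of $\chi\otimes V_2$) is sound and fills in what the paper delegates to the citation of \cite[Section 10]{GGP}.
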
 

\begin{rem} The Langlands parameter of $\delta(\chi)$ should perhaps be expressed using $\chi^{-1}=\chi^{\sigma}$ instead of $\chi$. 
 However, the theta lift of both $\delta(\chi)$ and $\delta(\chi^{\sigma})$ is the same representation of $G_2$, so this imprecision is harmless.  
\end{rem}

We may now describe the theta correspondence for (limits of) discrete series of $\mathrm{PU}_3(K)$ discussed above, and for extensions of those representations to 
$\mathrm{PU}_3(K)\rtimes \mathrm{Gal}(K/F)$, when $\mathrm{Gal}(K/F)$-invariant. These lifts will be computed using Jacquet functors and the following additional inputs that are, roughly speaking, 
\begin{itemize} 
\item The correspondence is one-to-one. 
\item It preserves tempered representations. 
\item It preserves generic representations. 
\end{itemize}

Recall that there are two ways to extend $I(\chi)$ to $\mathrm{PU}_3(K)\rtimes \mathrm{Gal}(K/F)$ when $\chi$ is Galois-invariant: $I(\chi^+)$ and $I(\chi^-)$. Here $\chi^+$ is the extension of $\chi$ which appears in the quadratic base change (see \S \ref{subs_oddsnends}). Another way to characterize $I(\chi^+)$ is via Whittaker functionals: $\mathrm{Gal}(K/F)$ acts trivially on the one-dimensional space of Whittaker functionals. If $\tau$ is a constituent of $I(\chi)$, let $\tau^+$ (resp.~$\tau^-$) be the extension of $\tau$ contained in $I(\chi^+)$ (resp.~$I(\chi^-)$). Using Jacquet functors like in Proposition \ref{prop_nontemp_lifts_1}, it follows that theta lifts of constituents of $I(\chi^-)$ are trivial unless the constituent is $\mathrm{St}^-$.

In order to state the result, let $\chi$ be a conjugate-dual character of $K^{\times}$ and let $\nu(\chi)$ be the irreducible representation of $\mathrm{GL}_2(F)$ corresponding to 
the two-dimensional representation $\rho(\chi)$ of the Weil group $W_F$. 
Let $I_{Q_1}^G(\nu(\chi), s)$ denote the principal series where we induce $\nu(\chi)$ twisted by $|\mathrm{det}|^{s}$. 
 If $\chi$ is conjugate-orthogonal, $\chi\neq 1$, then the central character of $\nu(\chi)$ is $\omega_{K/F}$ and $I_{Q_1}^G(\nu(\chi))$ is reducible,
 \[ 
 I_{Q_1}^G(\nu(\chi))= I_{Q_1}^G(\nu(\chi))_{\mathrm{gen}}\oplus I_{Q_1}^G(\nu(\chi))_{\mathrm{deg}}. 
 \]  
 If $\chi$ is conjugate-symplectic, then the central character of 
$\nu(\chi)$ is trivial and  $I_{Q_1}^G(\nu(\chi), 1/2)$ has a Whittaker generic tempered submodule. 
 If $\chi$ is not $\mathrm{Gal}(K/F)$-invariant, then $\nu(\chi)$ is a cuspidal representation and the tempered submodule is a discrete series representation.   
 If $\chi$ is $\mathrm{Gal}(K/F)$-invariant, then  
$\rho(\chi)= \chi_1 \oplus \chi_2$ for a pair of mutually inverse characters of $F^{\times}$, and the tempered submodule is 
\[ 
I_{Q_2}^G( \mathrm{st}_{\chi_1}) \cong I_{Q_2}^G( \mathrm{st}_{\chi_2}),
\]  where $\mathrm{st}_{\chi_i}$ denotes a twist of the Steinberg representation of $\mathrm{GL}_2(F)$ by the character $\chi_i(\mathrm{det})$. 

Finally, we recall the $A$-packet discussed in Section \ref{subs_B3parabolic}. The packet contains two representations: a supercuspidal, and the Langlands quotient of $i_{Q_1}^{G_2}(\lvert\det\rvert\otimes\tau)$, with $\tau$ equal to the tempered representation $1 \times \omega_{K/F}$. In the following proposition, we consider the corresponding discrete series $L$-packet (again attached to the subregular unipotent orbit and the cubic etal\'e algebra $F+K$). Its elements are obtained by applying the Aubert involutions to the elements of the $A$-packet; in particular, we have a supercuspidal representation, and a generic discrete series representation contained in $i_{Q_1}^{G_2}(\lvert\det\rvert\otimes\tau)$ as a submodule.

\begin{prop}  We have: 

\begin{enumerate} 
\item  $\{\theta(\mathrm{St}^+) , \theta(\mathrm{St}^-)\}$ is the discrete series $L$-packet attached to the subregular unipotent orbit and the cubic etal\'e algebra $F+K$.  $\theta(\mathrm{St}^-)$ is supercuspidal.  
\item If $\chi\neq 1$ is conjugate-orthogonal then 
\[ 
\theta(I(\chi)_{\mathrm{gen}}) = I_{Q_1}^G(\nu(\chi))_{\mathrm{gen}} \text{ and } \theta(I(\chi)_{\mathrm{deg}}) = I_{Q_1}^G(\nu(\chi))_{\mathrm{deg}}.  
\]  
If $\chi$ is $\mathrm{Gal}(K/F)$-invariant, then the statements involve constituents of 
$I(\chi^+)$.  
\item If $\chi$ is conjugate-symplectic then 
$\theta(\delta(\chi))$ is a Whittaker generic, tempered submodule of $I_{Q_1}^G(\nu(\chi), 1/2)$. 
 If $\chi$ is $\mathrm{Gal}(K/F)$-invariant, the lift is of $\delta(\chi)^+$. 
\end{enumerate} 
\end{prop}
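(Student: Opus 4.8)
The plan is to prove all three parts uniformly. Fix a conjugate-dual character $\chi$ and let $s_0 \in \{0, 1/2, 1\}$ be its reducibility point ($s_0 = 1$ if $\chi = 1$; $s_0 = 0$ if $\chi \neq 1$ is conjugate-orthogonal; $s_0 = 1/2$ if $\chi$ is conjugate-symplectic). I would then consider the short exact sequence of $G'$-modules
\[
0 \longrightarrow \sigma \longrightarrow I(\chi^{+}, s_0) = i_{B'}^{G'}\bigl(\chi^{+}\cdot\lvert N_{K/F}\rvert^{s_0}\bigr) \longrightarrow \sigma' \longrightarrow 0,
\]
where $\sigma$ is the tempered submodule ($\mathrm{St}^{+}$, or $\delta(\chi)^{+}$, or — when $s_0 = 0$, where the sequence splits — the whole $I(\chi^{+})_{\mathrm{gen}}\oplus I(\chi^{+})_{\mathrm{deg}}$) and $\sigma'$ the Langlands quotient. (Away from part~(1), the constituents of $I(\chi^{-})$ lift to zero by the Jacquet module argument of Proposition~\ref{prop_nontemp_lifts_1}, so only $I(\chi^{+})$ matters.) Applying the left-exact functor $\Hom_{G'}(\Pi,-)$ to this sequence and using $\Theta(\tau)^{*}=\Hom_{G'}(\Pi,\tau)$, I obtain a natural inclusion of $\Theta(\sigma)^{\vee}$ into the smooth part of $\Hom_{G'}(\Pi,I(\chi^{+},s_0))$; it therefore suffices to compute the latter as a $G_2$-module and to locate $\Theta(\sigma)$ inside it.

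For the computation I would use Frobenius reciprocity, $\Hom_{G'}(\Pi,i_{B'}^{G'}(\xi))=\Hom_{T'}(r_{U'}(\Pi),\xi)$, and the two-step filtration of $r_{U'}(\Pi)$ from Proposition~\ref{prop_D4}, with graded pieces \eqref{eq_D4_top} and \eqref{eq_D4_bottom}. When $\chi$ is unitary and $\chi\neq 1$, the $K^{\times}$-central character of the top piece $\Pi_{B_3}\cdot\lvert N_{K/F}(z)\rvert$, namely $\lvert N_{K/F}\rvert$, does not match $\xi|_{K^{\times}}$, so only the bottom piece $i_{L_1\times T'}^{G\times T'}(\tilde\omega)$ contributes; as $\tilde\omega$ realizes the base change $K^{\times}\leftrightarrow\GL_2(F)$ of \S\ref{subs_oddsnends}, induced along $Q_1$, this identifies $\Hom_{G'}(\Pi,I(\chi^{+},s_0))$ — after the contragredient produced by the inversion $w\mapsto w^{-1}$ in the model of $\tilde\omega$ — with the dual of the standard module $I_{Q_1}^{G}(\nu(\chi),s_0)$ of $G_2$ (the sign of the twist is forced, since a co-standard module would make a nonzero tempered lift impossible). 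Hence $\Theta(\sigma)^{\vee}$ embeds into $I_{Q_1}^{G}(\nu(\chi),s_0)$ and so equals $0$, its unique irreducible (tempered, Whittaker generic) submodule $T$, or the whole module. For the generic constituents — part~(3), and the generic halves of (1) and (2) — I would rule out the extremes: $\Theta(\sigma)\neq 0$ because the generic constituent carries a nontrivial Shalika period (the Shalika character being the Whittaker character of $G_2$ extended trivially across $\mathbb{Z}/2\mathbb{Z}$, and $\chi^{+}$ the extension on which $\mathbb{Z}/2\mathbb{Z}$ fixes the Whittaker line), so Corollary~\ref{cor_genericTheta} applies; and $\Theta(\sigma)$ is tempered by Remark~\ref{rem_temp2temp}, excluding the non-tempered full standard module. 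Thus $\Theta(\sigma)^{\vee}=T$; since $\delta(\chi)$ and $\delta(\chi^{\sigma})=\delta(\chi)^{\vee}$ (and similarly in the other cases) have the same lift, $T$ is self-dual, and $\theta(\sigma)=T$. For part~(2), the splitting $I(\chi^{+})=I(\chi^{+})_{\mathrm{gen}}\oplus I(\chi^{+})_{\mathrm{deg}}$ forces $\Theta$ to split; since the lift of an irreducible representation has a unique irreducible quotient, neither summand can lift to the two-summand module $I_{Q_1}^{G}(\nu(\chi))_{\mathrm{gen}}\oplus I_{Q_1}^{G}(\nu(\chi))_{\mathrm{deg}}$, so each lifts to one summand, and Corollary~\ref{cor_genericTheta} sends the degenerate one to $I_{Q_1}^{G}(\nu(\chi))_{\mathrm{deg}}$. (Proposition~\ref{prop_nontemp_lifts_2} applied to $\sigma'$ serves as a consistency check.)

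Part~(1) is the delicate case and I expect the main obstacle there. For $\chi=1$ and $s_0=1$ the $K^{\times}$-central character of $\Pi_{B_3}\cdot\lvert N_{K/F}(z)\rvert$ now \emph{does} match $\xi|_{K^{\times}}=\lvert N_{K/F}\rvert$, so the top piece of $r_{U'}(\Pi)$ also contributes. Since $\Pi_{B_3}$ is the minimal representation of the quasi-split $D_4^{E}$ with $E=F+K$, this feeds in the $G_2\times S_E$ correspondence inside $D_4^{E}$ (with $S_E\cong\mathbb{Z}/2\mathbb{Z}$) of \cite{GGJ},\cite{HMS} — precisely the cubic unipotent $A$-packet of \S\ref{subs_B3parabolic}, whose Aubert dual is the subregular discrete series $L$-packet $\{\text{supercuspidal},\ \text{generic discrete series}\subseteq i_{Q_1}^{G_2}(\lvert\det\rvert\otimes\tau)\}$. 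The two extensions $\mathrm{St}^{\pm}$ should select the two signs of the $S_E$-action, hence the two members of this packet: the genericity argument above sends $\mathrm{St}^{+}$ to the generic discrete series. The hard point is $\mathrm{St}^{-}$: one must show $\Theta(\mathrm{St}^{-})\neq 0$ (it is the single exception among constituents of $I(\chi^{-})$) and identify it with the supercuspidal member, which is not forced by the soft genericity and temperedness inputs and has to be extracted from the $D_4^{E}$ see-saw together with the explicit description of the packet in \cite{GGJ}. Beyond this, the remaining work throughout is normalization bookkeeping: matching each extension $\chi^{\pm}$ to the correct side, keeping the tempered submodule distinct from the Langlands quotient, and tracking the inversion and contragredient twists carried by $\tilde\omega$.
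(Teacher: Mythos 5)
Your treatment of parts (2), (3) and of $\theta(\mathrm{St}^+)$ is essentially the paper's own route: Frobenius reciprocity against the two-step filtration of $r_{U'}(\Pi)$ in Proposition \ref{prop_D4}, central-character separation to discard \eqref{eq_D4_top}, the base change carried by $\tilde{\omega}$ in \eqref{eq_D4_bottom}, and then genericity (Corollary \ref{cor_genericTheta}), temperedness (Remark \ref{rem_temp2temp}) and the one-to-one property to pin down the constituent. Two points of care there. In (2), the conclusion $\theta(I(\chi^+)_{\mathrm{deg}})=I_{Q_1}^G(\nu(\chi))_{\mathrm{deg}}$ requires the \emph{non}vanishing of $\Theta(I(\chi^+)_{\mathrm{deg}})$, which your genericity input does not give; your asserted identification of $\Hom_{G'}(\Pi,I(\chi^+))$ with the full dual of $I_{Q_1}^G(\nu(\chi))$ must be an equality, and that is exactly what the paper proves by exhibiting $I_{Q_1}^G(\nu(\chi))\boxtimes I(\chi^+)$ as a quotient of $\Pi$ (maximal isotypic quotient of $\tilde{\omega}$ via Lemma 9.4 of \cite{gan2011non}, plus the $K^\times$-central-character splitting). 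Also, Remark \ref{rem_temp2temp} constrains only irreducible quotients, so it does not by itself exclude $\Theta(\sigma)^\vee$ being the whole standard module; this is harmless, since only the statement about $\theta(\sigma)$ is claimed, but your assertion that $\Theta(\sigma)^\vee=T$ is not justified as written.

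The genuine gap is the one you flag yourself: $\mathrm{St}^-$ in part (1). Saying that the identification ``has to be extracted from the $D_4^E$ see-saw together with \cite{GGJ}'' is not an argument, and nothing in your scheme shows $\Theta(\mathrm{St}^-)\neq 0$, let alone that its lift is the supercuspidal member. The paper closes this by reversing the direction of the computation: let $\sigma$ be the supercuspidal member of the subregular $L$-packet of $G_2$. By the known $G_2\times S_E$ correspondence inside $D_4^{K+F}$ (\cite{HMS}, \cite{GGJ}), $\sigma$ occurs as a quotient of the top piece \eqref{eq_D4_top}, so $\Theta(\sigma)\neq 0$. Applying $r_{U'}$ to $\Pi\twoheadrightarrow\sigma\boxtimes\Theta(\sigma)$, the supercuspidality of $\sigma$ forbids it from pairing with the parabolically induced bottom piece \eqref{eq_D4_bottom}, so $\sigma\otimes r_{U'}(\Theta(\sigma))$ is a quotient of \eqref{eq_D4_top}; hence $r_{U'}(\Theta(\sigma))$ is one-dimensional and is precisely the exponent of $\mathrm{St}^-$, the Galois sign being read off from the $S_E\cong\mathbb{Z}/2\mathbb{Z}$-action on $\Pi_{B_3}$. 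The one-to-one property then kills the cuspidal part of $\Theta(\sigma)$, giving $\Theta(\sigma)=\mathrm{St}^-$, i.e.\ $\theta(\mathrm{St}^-)$ is supercuspidal. This reversal\,---\,computing $r_{U'}(\Theta(\sigma))$ for the supercuspidal $\sigma$ instead of attacking $\Theta(\mathrm{St}^-)$ directly\,---\,is the missing idea; without it your uniform scheme only reaches the generic halves of the packets.
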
 

\begin{proof}
\begin{enumerate} 
\item Let $\{\sigma,\pi\}$ be the $L$-packet, with $\sigma$ supercuspidal. We use the description of $r_{U'}(\Pi)$ from Proposition \ref{prop_D4}. First, note that $\Theta(\sigma)$ (a representation of $G'$) is non-trivial, because $\sigma$ appears as a quotient of (T3). Thus $\Theta(\sigma)$ appears as a quotient of the minimal representation $\Pi$. It follows that $\sigma \otimes r_{U'}(\Theta(\sigma))$ is a quotient of $r_{U'}(\Pi)$. Since $\sigma$ is supercuspidal, $\sigma \otimes r_{U'}(\Theta(\sigma))$ cannot appear in (B3); therefore, it is a quotient of (T3). Thus $r_{U'}(\Theta(\sigma))$ is one-dimensional, a quotient of (T3); notice that $r_{U'}(\Theta(\sigma))$ is precisely the exponent of $\text{St}^-$. Moreover, the supercuspidal part of $\Theta(\sigma)$ is necessarily $0$ because of the one-to-one property (Theorem \ref{thm_Howe}).  This shows $\Theta(\sigma) = \text{St}^-$.

It remains to determine $\theta(\text{St}^+)$. We know that $\theta(\text{St}^+)\neq 0$ (because $\text{St}^+$ is generic; see Corollary \ref{cor_genericTheta}) and tempered. This implies, using Proposition \ref{prop_D4} again, that $\theta(\text{St}^+)$ appears as a quotient in $B3$. From here it follows that $\theta(\text{St}^+)$ is a subquotient of $i_{Q_1}^{G_2}(\lvert\det\rvert^{-1}\otimes (1 \times \omega_{K/F}))$. Since $\theta(\text{St}^+)$ is tempered, the claim follows.

\item Again, we look at the description of the Jacquet module in Proposition \ref{prop_D4}. The two constituents of $I(\chi)$ are not distinguished by Jacquet modules: we have $r_{U'}(I(\chi)_{\text{gen}}) = r_{U'}(I(\chi)_{\text{deg}}) = \chi$. Note that $\chi$ appears as a quotient in (B3). As explained in \ref{subs_oddsnends}, the similitude correspondence arising from the representation $\tilde{\omega}$ in (B3) now shows that $\nu(\chi) \otimes \chi$ is a quotient of $\tilde{\omega}$. Inducing, we get that $I_{Q_1}^{G_2}(\nu(\chi))$ is a quotient of $r_U'(\Pi)$; in other words (using Frobenius reciprocity), $I_{Q_1}^{G_2}(\nu(\chi))$ is a quotient of $\Pi$. In particular, both $I_{Q_1}^{G_2}(\nu(\chi))_{\text{gen}}$ and $I_{Q_1}^{G_2}(\nu(\chi))_{\text{gen}}$ have non-zero theta lifts which are constituents of $I(\chi)$. Since lifts of generic representations remain generic (Corollary \ref{cor_genericTheta}), we must have $\theta(I(\chi)_{\mathrm{gen}}) = I_{Q_1}^{G_2}(\nu(\chi))_{\text{gen}}$; by the one-to-one property it now follows that $\theta(I(\chi)_{\mathrm{deg}}) = I_{Q_1}^{G_2}(\nu(\chi))_{\text{deg}}$.

\item The proof here is the same as for $\theta(\text{St}^+)$ in case (i).
\end{enumerate}
\end{proof}

\section{Mini theta} 
\label{subs_mini}
It is possible that cuspidal representations of $G'$ lift to non-cuspidal representations of $G$. From Jacquet functors, it is clear that such representations of $G'$ 
are lifts from the Levi $L_2\cong \GL_2(F)$ via the minimal representation of $M_2$.  
We shall describe this mini-theta correspondence by relating it to a classical theta correspondence for unitary groups.

\smallskip 

Let $M_2^{\circ}$ be the connected component of $M_2$. Take a generator of the group of algebraic characters of $M_2^{\circ}$, and let $M_2^{\mathrm{det}}$ be the index two subgroup of $M_2^{\circ}$ of elements such that the character takes values in $N_{K/F}(K^{\times})$. Furthermore, the restriction of the character to $L_2\cong \GL_2(F)$ is the determinant (or its inverse) and we can define $L_2^{\mathrm{det}}\cong \GL_2(F)^{\mathrm{det}}$ analogously.  Thus we have a group isomorphic to 
$\mathrm{PU}(3) \times \GL_2(F)^{\mathrm{det}}$ contained in $M_2^{\mathrm{det}}$. We shall now describe $M_2^{\mathrm{det}}$ and this embedding explicitly. To that end we need to discuss unitary groups briefly. 

\smallskip 
Let $\mathrm{GU}(n)$ denote the group of similitudes of an $n$-dimensional Hermitian space, and let $\mathrm{GU}(n)^{\mathrm{det}}$ denote the index 
two subgroup of elements such that the similitude takes value in $N_{K/F}(K^{\times})$.   
The forms of $\GU(2)$ can be described using quaternion algebras. Let $B$ be a quaternion algebra and fix an 
embedding of $K$ into $B$. The right multiplication by $K$ turns $B$ into a 2-dimensional symmetric Hermitian space, the Hermitian form given by the quaternion norm. 
We have 
\[ 
\GU(2) \cong (B^{\times} \times K^{\times} ) / \Delta F^{\times}
\] 
where $B^{\times}$ acts on $B$ from the left, and $K^{\times}$ from the right, by inverse. 
The center of this group is $(F^{\times} \times  K^{\times})/\Delta F^{\times}\cong K^{\times}$. The subgroup $\GU(2)^{\mathrm{det}}$ consists of pairs 
$(g,z)$ such that the norm of $g$ is in $N_{K/F}(K^{\times})$.  

\smallskip 
Assume now that $B$ is split, so that $B^{\times}= \GL_2(F)$. Let  $\mathrm{U}(6)$ be the unitary group corresponding to the Hermitian space $B\oplus B \oplus B$.  Then 
by Section 7.2 in \cite{gan2021twisted}
\[ 
M_2^{\mathrm{det}}\cong \{ (z,g) \in (K^{\times} \times \mathrm{U}(6))/\Delta \mathrm{U}(1) ~|~  z/\bar z = \mathrm{det} g \} 
\] 
where $\mathrm{U}(1)$ is embedded into $K^{\times} \times \mathrm{U}(6)$ by $z\mapsto (z^3,z)$ and  $\bar z$ denotes the action of the non-trivial  element in $\mathrm{Gal}(K/F)$.
Let $\mathrm{U}(3)\times \mathrm{U}(2)$ be a dual pair in $\mathrm{U}(6)$ so that $\mathrm{U}(2)$ acts diagonally on the three copies of $B$. We map $\mathrm{U}(3)$ into 
$M_2^{\mathrm{det}}$ by 
\[ 
g\mapsto (\mathrm{det}(g), g).
\] 
 It is easy to check that this map is well defined and trivial on the center $\mathrm{U}(1)$.  Now let 
$g\in \GL_2(F)^{\mathrm{det}}$. Let $z\in K^{\times}$ such that $N_{K/F}(z)=\mathrm{det}(g)$.  Then $(g,z)$ defines an element in $\mathrm{U}(2)$, using the above description 
of $\GU(2)$, and 
\[ 
g \mapsto (z^{-3}, (g,z)) 
\] 
is a well defined map from $\GL_2^{\mathrm{det}}$ into $M_2^{\mathrm{det}}$.  

\smallskip 

The mini-theta correspondence is related to the similitude theta
correspondence for $\GU(2)^{\mathrm{det}}  \times  \GU(3)$, which fits in the seesaw

 \begin{picture}(100,82)(-130,10) 

\put(-6,24){$ \GU(1) \cong K^{\times}  $} 

\put(75,24){$\GU(2)^{\mathrm{det}}$}

\put(37,36){\line(1,1){35}}

\put(10,74){$\GU(3)$}
\put(37,71){\line(1,-1){35}}

\put(75,74){$\GU(6)^{\mathrm{det}}$}

\end{picture}
\vskip 15pt

One uses the splitting character 1 on $\GU(3)$ and $\GU(1)$ on the left hand side,  $\mu$ on
$\GU(6)^{\mathrm{det}}$  and  $\mu^3$ on $\GU(2)^{\mathrm{det}}$ \cite[Section 7]{GGP2}. We start by considering this, without referring to the mini-theta yet.   

In this see-saw, one starts with the trivial rep of $\GU(1)$ on the bottom left, and
one takes an irreducible representation $\tau$ of $\GU(2)^{\mathrm{det}}$  on the bottom right.
Then the see-saw identity is
\[
\Hom_{\GU(2)^{\mathrm{det}}}(  \Theta(1), \tau)  = \Hom_{\GU(1)}( \Theta(\tau), 1) .
\] 
Hence, $\Theta(\tau)$ is a representation of $\GU(3)/ \GU(1)$ i.e.~a representation of $\GU(3)$ with trivial central character. 
 Moreover, with the choice of splitting characters as above, the theta correspondence carries
representations of $\GU(3)$ with trivial central character to representations of $\GU(2)^{\mathrm{det}}$ with central
character $\mu^3$. One can describe this theta correspondence, as a lifting from $\GU(2)^{\mathrm{det}}$ to $\U(3)/\U(1)\cong \GU(3)/ \GU(1) $ as follows. 

\smallskip

An irreducible representation of $\GU(2) \cong (\GL_2(F) \times K^{\times} ) / \Delta F^{\times}$ is of the form $\sigma \boxtimes \chi$ for $\sigma$ an irreducible 
representation of $\GL_2(F)$ and $\chi$ a character of $K^{\times}$, so that the central character of $\sigma$ is $\chi|_{F^{\times}}$. 
Since we are interested only in those irreducible representations of $\GU(2)$ with  the central character $\mu^3$, 
we must take $\chi = \mu^3$. In other words, we are looking at irreducible representations $\sigma$ of
$\GL_2(F)$ whose central character is $\omega_{K/F}$ (since $\mu|_{F^{\times}} = \omega_{K/F}$).

\smallskip 

Note that: 

\begin{itemize} 
\item  the contragredient of such a $\sigma$ is $\sigma^{\vee} \cong \sigma \otimes\omega_{K/F}$, so $\sigma$ is dihedral with respect to $K/F$ if and only if $\sigma$ is self-dual.

\item  hence, the restriction of  $\sigma$ to $\GL_2(F)^{\mathrm{det}}$ is irreducible if and only if $\sigma$ 
is not self-dual; if $\sigma$ is self-dual, the restriction breaks into 2 pieces. 

\end{itemize} 


With $\sigma \boxtimes \mu^3$ given, it gives an $L$-packet of 
\[ 
\U(2) = \{ (g,z): \det(g)= N(z) \}/ \Delta F^{\times} \subset \GU(2).
\] 
If $\phi_{\sigma}$ denotes the $L$-parameter of $\sigma$ (as a $\GL_2(F)$-representation), then the 
$L$-parameter of this $\U(2)$ $L$-packet is the conjugate-symplectic representation of the Weil group $W_K$ 
\[ 
\phi_{\sigma} |_{W_K}    \otimes \mu^3. 
\]

Assume now that $\sigma$ is a discrete series representation of $\GL_2(F)$. Then  $\phi_{\sigma}|_{W_K}$ 
is irreducible if and only if $\sigma$ is non-dihedral with respect to $K/F$, i.e. $\sigma$ is not self-dual. 
In any case, by theta lifting from $\GU(2)^{\mathrm{det}}$ to $\U(3)/\U(1)$, the representations of $\U(3)$ we get has
the $L$-parameter
\[
(*)  \hskip 15pt   \phi_{\sigma}|_{W_K}   +  1,
\] 
see Theorem A in \cite{GeRoSo}. Observe that this is a conjugate-orthogonal representation of $W_K$ of determinant one: 
$\phi_{\sigma}|_{W_K}$ has trivial determinant, since $\mathrm{det}\phi_{\sigma} = \omega_{K/F}$. 

\vskip 15pt 

More precisely:

\begin{itemize} 

\item if $\sigma$ is not  self-dual, i.e. $\sigma$ not dihedral, we are starting with a
singleton $L$-packet on $\GU(2)^{\mathrm{det}}$ and its lift is the unique generic representation $\Sigma$ in the
$L$-packet of $\U(3)$ with $L$-parameter $(*)$. That $L$-packet has another element which is
lifted from the non-quasi-split form of $\GU(2)^{\mathrm{det}}$.   

\item if $\sigma$ is self-dual, then the $L$-packet we started with on $\GU(2)^{\mathrm{det}}$ has 2
elements, distinguished by their Whittaker support, and  their theta lifts are two
representation $\Sigma_{\mathrm{gen}}$ and $\Sigma_{\mathrm{deg}}$ in the $L$-packet with $L$-parameter $(*)$; there are two other representations lifted from 
he non-quasi-split form of $\GU(2)^{\mathrm{det}}$.  

\end{itemize}


 \vskip 10pt 

With the above understanding of the similitude theta lifting in hand, let us  return
to the problem of mini-theta.  The representation  $\mu^{-1} \boxtimes \Theta(1)$ of $K^{\times} \times \mathrm{U}(6)$ descends to 
a representation of $M_2^{\mathrm{det}}$  which now is independent of $\mu$, see Section 8.4 in \cite{gan2021twisted}, and 
\[ 
\Ind_{M_2^{\mathrm{det}}}^{M_2^{\circ}}  \,\mu^{-1} \boxtimes \Theta(1) 
\] 
is the minimal representation of $M_2^{\circ}$. 
From the formulas for the embedding of  $\mathrm{PU}(3)$ and $\GL_2(F)^{\mathrm{det}}$ into $M_2^{\mathrm{det}}$, it is easy to check that 
these two groups act on $\mu^{-1} \boxtimes \Theta(1)$ in the same way as they act in the classical
see-saw pair above. Putting things together,  we have: 

\begin{prop}  Let $\sigma$ be a discrete series representation of $\GL_2(F)$ with the central character $\omega_{K/F}$.   Then:  
\begin{itemize} 

\item  If $\sigma$ is not self-dual, i.e. not dihedral w.r.t. $K/F$, the $L$-packet $(*)$ has one representation $\Sigma$ 
we are considering. Under the mini-theta, it lifts to $\sigma + \sigma^{\vee}$, and under the theta lift to $G_2$, it lifts to $\Ind_P^{G_2} \sigma \cong 
\Ind_P^{G_2}( \sigma^{\vee} )$  

\item 
 If $\sigma$ is self-dual, i.e. dihedral w.r.t. $K/F$, then the $L$-packet $(*)$ has two representations $\Sigma_{\mathrm{gen}}$ and $\Sigma_{\mathrm{deg}}$  we
are considering. Under the mini-theta, they lift to $\sigma$. Under the theta lift to $G_2$,
these two representations of  $\mathrm{PU}(3)$  lift to the 2 constituents of
$\Ind_P^{G_2}(\sigma)$.  

\end{itemize} 
\end{prop}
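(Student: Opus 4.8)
The plan is to assemble the ingredients developed above into a short argument in three stages: identify the mini-theta correspondence with a classical similitude see-saw, feed in the known answer for that see-saw, and then transport the result along restriction between $\GL_2(F)$ and $\GL_2(F)^{\mathrm{det}}$ and along parabolic induction to $G_2$. The key point, established in the paragraphs preceding the proposition, is that the minimal representation of $M_2^\circ$ is $\Ind_{M_2^{\mathrm{det}}}^{M_2^\circ}(\mu^{-1}\boxtimes\Theta(1))$, and that on the subgroup $\PU_3(K)\times\GL_2(F)^{\mathrm{det}}\subset M_2^{\mathrm{det}}$ the representation $\mu^{-1}\boxtimes\Theta(1)$ agrees with the Weil representation of the see-saw pair $(\GU(3),\GU(1))$, $(\GU(6)^{\mathrm{det}},\GU(2)^{\mathrm{det}})$ with splitting characters $1,1$ and $\mu,\mu^3$ (Section 8.4 of \cite{gan2021twisted}, \cite{GGP2}). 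Since $\GL_2(F)^{\mathrm{det}}=\GL_2(F)\cap M_2^{\mathrm{det}}$ has index two in $\GL_2(F)$ and $\GL_2(F)\cdot M_2^{\mathrm{det}}=M_2^\circ$, Mackey theory shows that the mini-theta lift to $\GL_2(F)$ of a representation $\Sigma$ of $\PU_3(K)$ occurring in that see-saw is $\Ind_{\GL_2(F)^{\mathrm{det}}}^{\GL_2(F)}$ of the classical similitude lift of $\Sigma$ to $\GL_2(F)^{\mathrm{det}}$.

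Next I would feed in the classical input together with the restriction/induction bookkeeping. By Theorem A of \cite{GeRoSo} and the see-saw identity recalled before the proposition, the similitude theta lift (with the chosen splitting characters) carries $\sigma\boxtimes\mu^3$ — where $\sigma$ is a discrete series of $\GL_2(F)$ with central character $\omega_{K/F}$ — to the $\U(3)$-packet with $L$-parameter $(*)=\phi_\sigma|_{W_K}\oplus 1$, and on the quasi-split side this packet is a singleton $\{\Sigma\}$ exactly when $\phi_\sigma|_{W_K}$ is irreducible, equivalently (since $\sigma^\vee\cong\sigma\otimes\omega_{K/F}$) when $\sigma$ is not self-dual, and otherwise it has the two members $\Sigma_{\mathrm{gen}},\Sigma_{\mathrm{deg}}$. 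As $\GL_2(F)^{\mathrm{det}}=\ker(\omega_{K/F}\circ\det)$, Clifford theory yields: if $\sigma$ is not self-dual then $\sigma|_{\GL_2(F)^{\mathrm{det}}}$ is irreducible and $\Ind_{\GL_2(F)^{\mathrm{det}}}^{\GL_2(F)}(\sigma|_{\GL_2(F)^{\mathrm{det}}})=\sigma\oplus\sigma^\vee$, whence $\Sigma$ mini-theta lifts to $\sigma+\sigma^\vee$; if $\sigma$ is self-dual then $\sigma|_{\GL_2(F)^{\mathrm{det}}}=\sigma_1\oplus\sigma_2$ with $\Ind_{\GL_2(F)^{\mathrm{det}}}^{\GL_2(F)}(\sigma_i)=\sigma$, and — matching Whittaker normalizations — $\Sigma_{\mathrm{gen}},\Sigma_{\mathrm{deg}}$ mini-theta lift to $\sigma_1,\sigma_2$, hence both to $\sigma$.

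Finally I would read off the lift to $G_2$. By Proposition \ref{prop_Heisenberg} and Theorem \ref{thm_Howe}, for $\tau\in\Irr(G')$ with $\Theta(\tau)$ non-cuspidal the representation $\Theta(\tau)$ is a constituent of $\Ind_{Q_2}^{G_2}$ of a twist of the mini-theta lift of $\tau$ (this is the induction $\Ind_P^{G_2}$ of the proposition, with $P=Q_2$); since $\sigma$ is a discrete series, $\Ind_{Q_2}^{G_2}(\sigma)$ is tempered, hence semisimple, so $\Theta(\tau)=\theta(\tau)$ is a single irreducible constituent of it. When $\sigma$ is not self-dual, the nontrivial element of the relative Weyl group $N_{G_2}(L_2)/L_2$ identifies $\Ind_{Q_2}^{G_2}(\sigma)$ with $\Ind_{Q_2}^{G_2}(\sigma^\vee)$, and this representation is irreducible, so $\theta(\Sigma)=\Ind_P^{G_2}(\sigma)\cong\Ind_P^{G_2}(\sigma^\vee)$. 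When $\sigma$ is self-dual, $\Ind_{Q_2}^{G_2}(\sigma)$ has length two; since $\Sigma_{\mathrm{gen}}$ is Whittaker generic, Corollary \ref{cor_genericTheta} shows $\theta(\Sigma_{\mathrm{gen}})$ is the generic constituent, and the injectivity part of Theorem \ref{thm_Howe} forces $\theta(\Sigma_{\mathrm{deg}})$ to be the other one, so the two representations lift to the two constituents of $\Ind_P^{G_2}(\sigma)$.

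\textbf{Expected main obstacle.} The substance lies in the character bookkeeping of the first two stages: verifying that $\mu$ genuinely disappears from $\mu^{-1}\boxtimes\Theta(1)$ and that the explicit embeddings of $\PU_3(K)$ and $\GL_2(F)^{\mathrm{det}}$ into $M_2^{\mathrm{det}}$ agree on the nose with the see-saw pair (so that mini-theta truly is the restricted classical correspondence), and tracking the Whittaker data through restriction to $\GL_2(F)^{\mathrm{det}}$ and back up, so that $\Sigma_{\mathrm{gen}}$ — and not $\Sigma_{\mathrm{deg}}$ — is matched with the generic constituent on $G_2$. One also needs the standard reducibility facts for $G_2$ invoked in the third stage: that $\Ind_{Q_2}^{G_2}$ of a non-self-dual discrete series of $\GL_2(F)$ is irreducible whereas a self-dual one gives length two, together with the Weyl-group identity $\Ind_{Q_2}^{G_2}(\sigma)\cong\Ind_{Q_2}^{G_2}(\sigma^\vee)$.
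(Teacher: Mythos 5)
Your proposal is correct and follows essentially the paper's own route: the "proof" in the paper is exactly the preceding discussion (descent/induction structure of the minimal representation of $M_2^{\circ}$ identifying mini-theta with the classical see-saw for $\GU(2)^{\mathrm{det}}\times\GU(3)$, Theorem A of \cite{GeRoSo}, the index-two Clifford-theory bookkeeping between $\GL_2(F)$ and $\GL_2(F)^{\mathrm{det}}$), and your transfer to $G_2$ through the $(T2)$ piece of $r_{U_2}(\Pi)$ with $P=Q_2$, temperedness, genericity (Corollary \ref{cor_genericTheta}) and Howe duality matches what the paper leaves implicit (and is confirmed by the paper's closing $L$-parameter computation). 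The only small adjustments: to rule out $\theta(\Sigma_{\mathrm{deg}})$ being the generic constituent you should invoke the uniqueness of the irreducible quotient of $\Theta(\pi_{\mathrm{gen}})$ (Theorem \ref{thm_Howe}(i)) rather than the injectivity statement (ii), which as stated concerns lifts from the $G_2$ side; and, like the paper, you are really working with suitable extensions to $\PU_3(K)\rtimes\mathbb{Z}/2\mathbb{Z}$, an issue the paper defers to the remark following the proposition.
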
 

\vskip 10pt 

We still need to go from $\mathrm{PU}(3)$ to $\mathrm{PU}(3) \rtimes \mathbb Z/2\mathbb Z$.
The action of $\mathbb Z/2\mathbb Z $ on $\Irr(\mathrm{PU}(3))$ is sending $\pi$ to
$\pi^{\vee}$ (see \cite{MVW}).  The representations in the L-packet $(*)$ are all self-dual, and hence each has two 
extensions to $\mathrm{PU}(3) \rtimes \mathbb Z/2\mathbb Z$. Of course, by the one-to-one result, only one of these extensions can lift to 
a summand of $\Ind_P^{G_2} \sigma $. The other extension should not lift to $G_2$, however, we cannot exclude that it lifts to a cuspidal representation of $G_2$.

 \vskip 15pt 
   
 Finally, let us discuss what happens on the level of $L$-parameters. 
  The $L$-parameter $(*)$ is a 3-dimensional rep $\rho: W_K \rightarrow \SL_3(\mathbb C)$ of the form   
  $\rho = \phi |_{W_K} +1$,  where $\phi : W_F \rightarrow \GL_2(\mathbb C)$ has $\mathrm{det}(\phi) = \omega_{K/F}$.  
Also, $\rho$ is the restriction to $W_K$ of an $L$-parameter 
\[ 
\rho':  W_F \rightarrow \SL(3) \rtimes \mathbb Z/2\mathbb Z 
\] 
where the latter is the $L$-group of $\mathrm{PU}(3)$. 
Using the further inclusion $\SL(3) \rtimes \mathbb Z/2\mathbb Z  \rightarrow  G_2 $ 
the 7-dimensional representation of $G_2(\mathbb C)$, as a $W_F$-module, decomposes as 
 \[ 
(\Ind_{W_K}^{W_F}  \rho ) +   \omega_{K/F}  
= ( \phi\cdot  \omega_{K/F} +  \phi + 1+ \omega_{K/F} )  + \omega_{K/F}.
\] 
Recalling that $\phi \cdot \omega_{K/F} = \phi^{\vee}$, and regrouping (i.e. conjugating),
we rewrite this as:
\[ 
(\phi + \omega_{K/F})  + (\phi^{\vee} + \omega_{K/F})  +  1.
\] 
This parameter factors through the Levi $\GL_2(\mathbb C)$ in $\SL_3(\mathbb C)\subset G_2(\mathbb C)$, therefore it is a parameter of the induced representation 
$\Ind_P^{G_2} \sigma \cong \Ind_P^{G_2}( \sigma^{\vee} )$.

\bibliographystyle{amsplain}
\bibliography{bibliography}

\end{document}